\titleformat{\chapter}[display]
{\normalfont\huge\bfseries}{\chaptertitlename\\thechapter}{20pt}{\Huge}
\titleformat{\subsubsection}[runin]
{\normalfont\normalsize\bfseries}{\thesubsubsection}{1em}{}
\titleformat{\paragraph}[runin]
{\normalfont\normalsize\bfseries}{\theparagraph}{1em}{}
\titleformat{\subparagraph}[runin]
{\normalfont\normalsize\bfseries}{\thesubparagraph}{1em}{}
\titlespacing*{\chapter} {0pt}{50pt}{40pt}
\titlespacing*{\section} {0pt}{3.5ex plus 1ex minus .2ex}{2.3ex plus .2ex}
\titlespacing*{\subsection} {0pt}{3.25ex plus 1ex minus .2ex}{1.5ex plus .2ex}
\titlespacing*{\subsubsection}{0pt}{3.25ex plus 1ex minus .2ex}{1.5ex plus .2ex}
\titlespacing*{\paragraph} {0pt}{3.25ex plus 1ex minus .2ex}{1em}
\titlespacing*{\subparagraph} {\parindent}{3.25ex plus 1ex minus .2ex}{1em}
\subjclass[2000]{Primary 16S35; Secondary 16W30}
\newtheorem{theorem}{Theorem}[section]
\newtheorem{lemma}[theorem]{Lemma}
\newtheorem{proposition}[theorem]{Proposition}
\newtheorem{corollary}[theorem]{Corollary}
\theoremstyle{definition}
\newtheorem{definition}[theorem]{Definition}
\newtheorem{notations}[theorem]{Notations}
\newtheorem{notation}[theorem]{Notation}
\theoremstyle{remark}
\newtheorem{remark}[theorem]{Remark}
\DeclareMathOperator{\Aut}{Aut}
\DeclareMathOperator{\ch}{char}
\DeclareMathOperator{\ide}{id}
\DeclareMathOperator{\lcm}{lcm}
\DeclareMathOperator{\Supp}{Supp}
\DeclareMathOperator{\en}{en}
\DeclareMathOperator{\factors}{factors}
\DeclareMathOperator{\st}{st}
\DeclareMathOperator{\PE}{PE}
\DeclareMathOperator{\dir}{dir}
\DeclareMathOperator{\Dirsup}{Dirsup}
\DeclareMathOperator{\Dirinf}{Dirinf}
\DeclareMathOperator{\Succ}{Succ}
\DeclareMathOperator{\Pred}{Pred}
\DeclareMathOperator{\Dir}{Dir}
\newcommand{\ov}{\overline}
\begin{document}

\title{The Dixmier conjecture and the shape of possible counterexamples}

\author{Jorge A. Guccione}
\address{Departamento de Matem\'atica\\ Facultad de Ciencias Exactas y Naturales-UBA, Pabell\'on~1-Ciudad Universitaria\\ Intendente Guiraldes 2160 (C1428EGA) Buenos Aires, Argentina.}
\address{Instituto de Investigaciones Matem\'aticas ``Luis A. Santal\'o"\\ Facultad de Ciencias Exactas y Natu\-rales-UBA, Pabell\'on~1-Ciudad Universitaria\\ Intendente Guiraldes 2160 (C1428EGA) Buenos Aires, Argentina.}
\email{vander@dm.uba.ar}

\author{Juan J. Guccione}
\address{Departamento de Matem\'atica\\ Facultad de Ciencias Exactas y Naturales-UBA\\ Pabell\'on~1-Ciudad Universitaria\\ Intendente Guiraldes 2160 (C1428EGA) Buenos Aires, Argentina.}
\address{Instituto Argentino de Matem\'atica-CONICET\\ Saavedra 15 3er piso\\ (C1083ACA) Buenos Aires, Ar\-gentina.}
\email{jjgucci@dm.uba.ar}

\thanks{Jorge A. Guccione and Juan J. Guccione were supported by PIP 112-200801-00900 (CONICET)}

\author{Christian Valqui}
\address{Pontificia Universidad Cat\'olica del Per\'u, Secci\'on Matem\'aticas, PUCP, Av. Universitaria 1801, San Miguel, Lima 32, Per\'u.}

\address{Instituto de Matem\'atica y Ciencias Afines (IMCA) Calle Los Bi\'ologos 245. Urb San C\'esar. La Molina, Lima 12, Per\'u.}
\email{cvalqui@pucp.edu.pe}

\thanks{Christian Valqui was supported by PUCP-DGI-2011-0206, PUCP-DGI-2012-0011 and Lucet 90-DAI-L005.}

\subjclass[2010]{Primary 16S32; Secondary 16W20}
\keywords{Weyl algebra, Dixmier Conjecture}

\begin{abstract} We establish a lower bound for the size of possible counterexamples of the Dixmier Conjecture. We prove that $B>15$, where $B$ is the minimum of the greatest common divisor of the total degrees of $P$ and $Q$, where $(P,Q)$ runs over the counterexamples of the Dixmier Conjecture.
\end{abstract}

\maketitle

\section*{Introduction}

The {\em Weyl algebra} $A_1$ over a field $K$ is the quotient of the free associative and unital $K$-algebra on two generators $X,Y$ by the ideal generated by the relation $[Y,X]=1$. The Weyl algebra is the first of an infinite family of algebras, known as {\em Weyl algebras}, which were introduced by Hermann Weyl to study the Heisenberg uncertainty principle in quantum mechanics. The {\em $n$-th Weyl algebra over $K$} is the associative and unital $K$-algebra $A_n$ generated by the $2n$ variables $X_1,Y_1,\dots, X_n,Y_n$, subject to the relations $[X_i,X_j]= [Y_i,Y_j]=0$  and $[Y_i,X_j]=\delta_{ij}$, where $\delta_{ij}$ is the Kr\"onecker symbol.

In~\cite{D} Dixmier posed six problems, the first of which, also known as the Dixmier conjecture (DC), was the following: is an algebra endomorphism of the Weyl algebra $A_1$ over a field of characteristic zero, necessarily an automorphism? This question makes sense for all the Weyl algebras, and in fact the generalized DC claims that if $\ch(K)=0$, then any endomorphism of $A_n$ must be an automorphism, for each $n\in \mathds{N}$.

Currently, the DC remains open even for the case $n = 1$. Some of the results in this topic are the following: In the early eighties, L. Vaserstein and V. Kac showed that the generalized DC implies the Jacobian conjecture (see~\cite{BCW}, where this result is explicitly established). In 2005 the stable equivalence between the Dixmier and Jacobian conjectures was established by Yoshifumi Tsuchimoto~\cite{T}, and the same result was obtained in an independent way in 2007 by Alexei Belov-Kanel and Maxim Kontsevich~\cite{BK} and by Pascal Kossivi Adjamagbo and Arno van den Essen~\cite{AE}. For a short proof of the equivalence between these two problems see~\cite{B1}. In fact, in the~\cite{AE} paper, there is also established the equivalence between the generalized DC and a similar conjecture about the endomorphisms of a family of Poisson algebras, that the authors call the Poisson Conjecture. Some other papers concerning the DC in a direct or an indirect way are~\cite{B2}, \cite{BL}, \cite{GGV1} and~\cite{Z}.

In this work we deal with the case $n=1$. We set
$$
B:= \begin{cases} \infty &\text{if DC is true,}\\ \min\bigl(\gcd(\deg(P),\deg(Q) \bigr) & \text{where $(P,Q)$ is a counterexample to DC, if DC is false.}
\end{cases}
$$
Here $\deg(P)$ is the total degree of $P$ and we will prove that  $B>15$. For this we consider a counterexample $(P,Q)$ with $\gcd(\deg(P),\deg(Q))=B$ and call it a minimal pair. Such a pair can be supposed to be subrectangular, which means that the support fits into a rectangle and contains its farthest corner. We will ``cut'' the lower right edge of the support. This process requires various geometric conditions on the support, and we will show that if $\gcd(\deg(P),\deg(Q))\le 15$, then these conditions cannot be satisfied, thus proving our result.

There is a strong parallelism with the shape of possible counterexamples of the Jacobian Conjecture in dimension~$2$. In fact, based on a work of Appelgate and Onishi (\cite{AO}), Nagata shows in~\cite{N1} and~\cite{N2}, that~$9$ is a lower bound for $\gcd(\deg(P),\deg(Q))$, where $(P,Q)$ is a counterexample to JC.

In order to carry out our constructions we need to embed $P$ and $Q$ in a bigger algebra $A_1^{(l)}$, obtained adjoining fractional powers of $X$ to $A_1$. As a $K$-linear space $A_1^{(l)}$ is $K[X,X^{-1/l},Y]$ and the relation $[Y,X]=1$ is preserved. This is basically the same construction as in~\cite{J}*{p.~599} where fractional powers of $Y$ are added instead.

In Section~1 we recall some basic concepts. For us a direction is a pair $(\rho,\sigma)$ of coprime integers with $\rho+\sigma\ge 0$. For each such a direction $(\rho,\sigma)$ we associate a $(\rho,\sigma)$-degree $v_{\rho,\sigma}$ on $A_1^{(l)}$ and we call $\ell_{\rho,\sigma}(P)$ the highest $(\rho,\sigma)$-degree term of $P$. Moreover we define $\st_{\rho,\sigma}(P)$ and $\en_{\rho,\sigma}(P)$ to be the starting point and the end point of the support of $\ell_{\rho,\sigma}(P)$.

\begin{figure}[h]
\centering

\begin{tikzpicture}
\draw[step=.5cm,gray,very thin] (0,0) grid (3.8,3.8);
\draw [->] (0,0) -- (0.5,0) node[anchor=north] {$1$} -- (1,0)  -- (2.5,0) node[anchor=north]{$5$} --(4,0) node[anchor=north]{$X$};
\draw [->] (0,0) -- (0,0.5) node[anchor=east] {$1$} -- (0,2.5) node[anchor=east]{$5$} -- (0,4) node[anchor=east]{$Y$};
\draw (2.5,0) -- (3,0.5)
 -- (3, 1.5) node[fill=white, right=4pt]{$\st_{\rho,\sigma}(P)$} -- (2,3)node[fill=white, anchor=south west]{$\en_{\rho,\sigma}(P)$} -- (1,3.5)
-- (0,2.5);
\draw [->] (0,0) -- (1.5,1) node[fill=white, right=1pt] {$(\rho,\sigma)$};
\draw [very thick](3,1.5)--node[fill=white,anchor=south west]{$\ell_{\rho,\sigma}(P)$} (2,3);
\filldraw [gray] (1.5,0)   circle (2pt)
                 (2.5,0)   circle (2pt)
                 (3,0.5)   circle (2pt)
                 (3,1.5)   circle (2pt)
                 (2,3)     circle (2pt)
                 (1,3.5)   circle (2pt)
                 (0.5,3)   circle (2pt)
                 (0.5,1.5) circle (2pt)
                 (1.5,2)   circle (2pt);
\draw[xshift=4.8cm,yshift=2cm]
node[right,text width=6.5cm]{Illustration of $\st_{\rho,\sigma}(P)$ and $\en_{\rho,\sigma}(P)$ for
\begin{align*} P= & X^3 + X^5 + X^6Y + XY^3 + X^6Y^3 \\ &+ X^3Y^4 + XY^6 + X^4Y^6 + X^2Y^7\end{align*}
and $(\rho,\sigma)=(3,2)$. In this example
$$
\ell_{\rho,\sigma}(P)= x^6y^3 + x^4y^6.
$$};
\end{tikzpicture}

\caption{Illustration of $\st$ and $\en$}
\end{figure}

In Section~2 we associate a bracket $[-,-]_{\rho,\sigma}$ with each direction $(\rho,\sigma)$. This bracket is essentially the commutator of the highest terms and coincides, up to signs, with the usual Poisson bracket of the terms of highest order, for example as defined in~\cite{J}*{p.~599}.

Then we analyze the relation of the $(\rho,\sigma)$-bracket with $\st_{\rho,\sigma}$ and $\en_{\rho,\sigma}$. This allows us to explore in Theorem~\ref{central} the geometric implications of an important result of~\cite{J} on the shape of minimal pairs. This result states that there exists a $(\rho,\sigma)$-ho\-mo\-ge\-neous element $F\in A_1^{(l)}$ such that
$$
[P,F]_{\rho,\sigma}=\ell_{\rho,\sigma}(P),
$$
whenever $(P,Q)$ is a counterexample to DC.

The aforementioned geometric implications permit us in Section~5 to ``cut'' the right lower edge of the support of a given minimal pair in $A_1^{(l)}$. The geometric conditions this imposes on the support are given partially in Proposition~\ref{lema general}, and translated into a powerful algebraic condition.

Although it is known that a counterexample $(P,Q)$ to DC can be brought into a subrectangular shape, we have to prove that this can be done without changing $\gcd(\deg(P),\deg(Q))$, and in such a way that the changed pair satisfies definition~\ref{Smp}. This work is done in Section~6, where we also prove that $\deg(P)$ does not divide $\deg(Q)$ and vice versa.

In the last section we translate all geometric conditions into algebraic ones for the possible corners of the support. We finally show that there cannot be a minimal pair $(P,Q)$ with $\gcd(\deg(P),\deg(Q))\le 15$ satisfying these conditions.

We want to emphasize that the central ideas of this article are of geometric nature. However we give detailed algebraic proofs of the geometric facts that we use along the article. This is specially true for section 3, where we analyze the order relation on directions given essentially by
$$
(\rho,\sigma)<(\rho',\sigma') \Longleftrightarrow (\rho,\sigma)\times (\rho', \sigma'):=\det\left(\begin{smallmatrix} \rho &\sigma\\ \rho &\sigma' \end{smallmatrix}\right)>0.
$$
Let us explain the two main geometric results of that section. We first define
$$
\Dir(P):=\{(\rho,\sigma)\in\mathfrak{V}:\#\Supp(\ell_{\rho,\sigma}(P))>1\}.
$$
One important result of section 3 is Proposition~\ref{le basico}, which states that if $(\rho_1,\sigma_1)<(\rho_2,\sigma_2)$ are two consecutive directions in $\Dir(P)$, then $\st_{\rho_2,\sigma_2}(P)$ coincides with $en_{\rho_1,\sigma_1}(P)$ and with $\Supp(\ell_{\rho,\sigma}(P))$ for each intermediate direction $(\rho,\sigma)$. We also use frequently Proposition~\ref{le basico1}, which compares $v_{\rho',\sigma'}\bigl(\st_{\rho,\sigma}(P)\bigr)$ with $v_{\rho',\sigma'}\bigl(\en_{\rho,\sigma}(P)\bigr)$.

\subsection*{Acknowledgment} We wish to thank the referee for useful suggestions and for pointing out the reference~\cite{J}.

\section{Preliminaries}\label{preliminares}

\setcounter{equation}{0}

In this section we fix the notation and we establish some basic results that we will use throughout the paper.

\smallskip

For each $l\in\mathds{N}$, we let $A_1^{(l)}$ denote the Ore extension $A[Y,\ide,\delta]$, where $A$ is the algebra of Laurent polynomials $K[Z_l,Z_l^{-1}]$ and $\delta\colon A\to A$ is the derivation, defined by $\delta(Z_l):=\frac{1}{l} Z_l^{1-l}$. Suppose that $l,h\in\mathds{N}$ are such that $l|h$ and let $d := h/l$. Since
\begin{equation}
[Y,Z_h^d] =\sum_{i=0}^{d-1} Z_h^i[Y,Z_h]Z_h^{d-i-1} =\frac{d}{h} Z_h^{d-h} =\frac{1}{l} (Z_h^d)^{1-l},\label{eq0}
\end{equation}
there is an algebra inclusion $\iota_l^h\colon A_1^{(l)}\to A_1^{(h)}$, such that $\iota_l^h(Z_l)= Z_h^d$ and $\iota_l^h(Y)=Y$.

\smallskip

We will write $X^{\frac{1}{l}}$ and $X^{\frac{-1}{l}}$ instead of $Z_l$ and $Z_l^{-1}$, respectively. Note that $\iota_l^h(X^{\frac{1}{l}}) = (X^{\frac{1}{h}})^d$. We will consider $A_1^{(l)}\subseteq A_1^{(h)}$ via this inclusion. Clearly $A_1$ is included in $A_1^{(1)}$.

\smallskip

Similarly, for each $l\in\mathds{N}$, we consider the commutative $K$-algebra $L^{(l)}$, generated by variables $x^{\frac{1}{l}}$, $x^{\frac{-1}{l}}$ and $y$, subject to the relation $x^{\frac{1}{l}} x^{\frac{-1}{l}} = 1$. In other words $L^{(l)} = K[x^{\frac{1}{l}}, x^{\frac{-1}{l}},y]$. Obviously, there is a canonical inclusion $L^{(l)}\subseteq L^{(h)}$, for each $l,h\in\mathds{N}$ such that $l|h$. We let
$$
\Psi^{(l)}\colon A_1^{(l)}\to L^{(l)}
$$
denote the $K$-linear map defined by $\Psi^{(l)}\bigl(X^{\frac{i}{l}}Y^j\bigr) := x^{\frac{i}{l}} y^j$. Let
$$
\ov{\mathfrak{V}} :=\{(\rho,\sigma)\in\mathds{Z}^2:\text{$\gcd(\rho,\sigma) = 1$ and $\rho+\sigma\ge 0$}\}\quad\text{and}\quad\mathfrak{V} :=\{(\rho,\sigma)\in\ov{\mathfrak{V}}:\rho+\sigma> 0\}.
$$

\begin{definition} For all $(\rho,\sigma)\in\ov{\mathfrak{V}}$ and $(i/l,j)\in\frac{1}{l}\mathds{Z}\times \mathds{Z}$, we write
$$
v_{\rho,\sigma}(i/l,j):=\rho i/l+\sigma j.
$$
\end{definition}

\begin{notations}\label{not valuaciones para polinomios} Let $(\rho,\sigma)\in\ov{\mathfrak{V}}$. For $P =\sum a_{\frac{i}{l},j} x^{\frac{i}{l}} y^j\in L^{(l)}\setminus\{0\}$, we define:

\begin{itemize}

\smallskip

\item[-] The {\em support} of $P$ as $\Supp(P) :=\left\{\left(i/l,j\right): a_{\frac{i}{l},j}\ne 0\right\}$.

\smallskip

\item[-] The {\em $(\rho,\sigma)$-degree} of $P$ as $v_{\rho,\sigma}(P):=\max\left\{v_{\rho,\sigma}\bigl(i/l,j\bigr): a_{\frac{i}{l},j}\ne 0\right\}$.

\smallskip

\item[-] The {\em $(\rho,\sigma)$-leading term} of $P$ as
$
\ell_{\rho,\sigma}(P):=\displaystyle{\sum_{\{\rho\frac{i}{l} +\sigma j = v_{\rho,\sigma}(P)\}}} a_{\frac{i}{l},j} x^{\frac{i}{l}} y^j.
$

\smallskip

\item[-] $w(P):=\left(i_0/l,i_0/l-v_{1,-1}(P)\right)$, where
$$
i_0 :=\max\left\{i:\left(i/l,i/l-v_{1,-1}(P)\right)\in\Supp(\ell_{1,-1}(P))\right\}.
$$

\smallskip

\item[-] $\ell_c(P):= a_{\frac{i_0}{l},j_0}$, where $\left(i_0/l,j_0\right)= w(P)$.

\smallskip

\item[-] $\ell_t(P):= a_{\frac{i_0}{l},j_0} x^{\frac{i_0}{l}}y^{j_0}$, where $\left(i_0//l,j_0\right)= w(P)$.

\smallskip

\item[-] $\ov{w}(P):=\left(i_0/l-v_{-1,1}(P),i_0/l\right)$, where
$$
i_0 :=\max\left\{i:\left(i/l-v_{-1,1}(P),i/l\right)\in\Supp(\ell_{-1,1}(P))\right\}.
$$

\smallskip

\item[-] $\ov{\ell}_c(P):= a_{\frac{i_0}{l}j_0}$, where $\left(i_0/l,j_0\right)=\ov{w}(P)$.

\smallskip

\item[-] $\ov{\ell}_t(P):= a_{\frac{i_0}{l}j_0} x^{\frac{i_0}{l}}y^{j_0}$, where $\left(i_0/l,j_0\right)=\ov{w}(P)$.

\end{itemize}
\end{notations}

\begin{notations}\label{not valuaciones para alg de Weyl} Let $(\rho,\sigma)\in\ov{\mathfrak{V}}$. For $P\in A_1^{(l)}\setminus\{0\}$, we define:

\begin{itemize}

\smallskip

\item[-] The {\em support} of $P$ as $\Supp(P) :=\Supp\bigl(\Psi^{(l)}(P)\bigr)$.

\smallskip

\item[-] The {\em $(\rho,\sigma)$-degree} of $P$ as $v_{\rho,\sigma}(P):= v_{\rho,\sigma}\bigl(\Psi^{(l)}(P)\bigr)$.

\smallskip

\item[-] The {\em $(\rho,\sigma)$-leading term} of $P$ as $\ell_{\rho,\sigma}(P):=\ell_{\rho,\sigma}\bigl(\Psi^{(l)}(P)\bigr)$.

\smallskip

\item[-] $w(P):= w\bigl(\Psi^{(l)}(P)\bigr)$ and $\ov{w}(P):=\ov{w}\bigl(\Psi^{(l)}(P)\bigr)$.

\smallskip

\item[-] $\ell_c(P):=\ell_c\bigl(\Psi^{(l)}(P)\bigr)$ and $\ov{\ell}_c(P):=\ov{\ell}_c\bigl(\Psi^{(l)} (P)\bigr)$.

\smallskip

\item[-] $\ell_t(P):=\ell_c(P) X^{\frac{i_0}{l}}Y^{j_0}$, where $\left(i_0/l,j_0\right) = w(P)$.

\smallskip

\item[-] $\ov{\ell}_t(P):=\ov{\ell}_c(P) X^{\frac{i_0}{l}}Y^{j_0}$, where $\left(i_0/l,j_0\right)=\ov{w}(P)$.

\end{itemize}
\end{notations}

\begin{remark} To abbreviate expressions we set $v_{\rho,\sigma}(0) = -\infty$ for all $(\rho,\sigma)\in\ov{\mathfrak{V}}$.
\end{remark}

\begin{notation}\label{not elementos rho-sigma homogeneos} We say that $P\in L^{(l)}$ is {\em $(\rho,\sigma)$-homogeneous} if $P = 0$ or $P =\ell_{\rho,\sigma}(P)$. Moreover we say that $P\in A_1^{(l)}$ is {\em $(\rho,\sigma)$-homogeneous} if $\Psi^{(l)}(P)$ is so.
\end{notation}

\begin{definition}\label{Comienzo y Fin de un elemento de A_1^{(l)}} Let $P\in A_1^{(l)}\setminus\{0\}$. For $(\rho,\sigma)\in\ov{\mathfrak{V}}\setminus\{(1,-1)\}$ and $(\rho',\sigma')\in\ov{\mathfrak{V}}\setminus\{(-1,1)\}$, we write
$$
\st_{\rho,\sigma}(P) := w(\ell_{\rho,\sigma}(P))\quad\text{and}\quad\en_{\rho',\sigma'}(P) :=\ov{w}(\ell_{\rho',\sigma'}(P)).
$$
\end{definition}

\begin{lemma} (Compare with~\cite{D}*{Lemma 2.1})\label{le conmutacion en A_1l} For each $l\in\mathds{N}$, we have
$$
Y^jX^{\frac{i}{l}} =\sum_{k=0}^j k!\binom{j}{k}\binom{i/l}{k} X^{\frac{i}{l}-k} Y^{j-k}.
$$
\end{lemma}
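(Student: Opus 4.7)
The statement is essentially a lifting of Dixmier's original commutation identity in $A_1$ to the extended algebra $A_1^{(l)}$, where the variable $X^{1/l}$ behaves under the derivation $\delta$ just like a formal power $X^{s}$ with $s=1/l$. My plan is a straightforward induction on $j$, once the basic ``one-commutator'' rule has been extracted from the definition of $A_1^{(l)}$.

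First I would record the elementary fact that, for every integer $i$ (positive, negative or zero),
$$
[Y,X^{\frac{i}{l}}] = \frac{i}{l}\,X^{\frac{i}{l}-1}.
$$
This is nothing but the derivation property of $\delta$ applied to $Z_l^i$: one computes $\delta(Z_l^i) = i Z_l^{i-1}\delta(Z_l) = \frac{i}{l} Z_l^{i-l}$, exactly as in the displayed calculation~\eqref{eq0} in the preamble (the case $i=d$). This is the only input from the structure of $A_1^{(l)}$ that is needed; from this point on the proof is formal and identical (after setting $s=i/l$) to the classical case treated in Dixmier's Lemma~2.1.

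For the induction I take $j=0$ as trivial (the right-hand side collapses to the $k=0$ term, which is $X^{i/l}$). For the inductive step, I would multiply the formula for $Y^jX^{i/l}$ on the left by $Y$, push $Y$ past $X^{(i/l)-k}$ using the identity above, and split the result into two sums:
$$
Y^{j+1}X^{\frac{i}{l}}=\sum_{k=0}^{j}k!\binom{j}{k}\binom{i/l}{k}X^{\frac{i}{l}-k}Y^{j+1-k}+\sum_{k=0}^{j}k!\binom{j}{k}\binom{i/l}{k}\Bigl(\frac{i}{l}-k\Bigr)X^{\frac{i}{l}-k-1}Y^{j-k}.
$$
In the second sum I reindex with $k\mapsto k-1$ and use the identity
$$
\binom{s}{k-1}(s-k+1)=k\binom{s}{k},
$$
valid for arbitrary $s\in\mathds{Q}$ (here $s=i/l$), to turn the coefficient into $k!\binom{j}{k-1}\binom{i/l}{k}$. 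Adding the two sums and applying Pascal's rule $\binom{j}{k}+\binom{j}{k-1}=\binom{j+1}{k}$ yields precisely the desired formula with $j$ replaced by $j+1$.

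There is really no substantial obstacle here; the only thing to be careful about is that the binomial symbol $\binom{i/l}{k}$ denotes the \emph{generalized} binomial coefficient, so one must use the polynomial identity $\binom{s}{k-1}(s-k+1)=k\binom{s}{k}$ rather than any identity that implicitly assumes $s\in\mathds{N}$. Once that is noted, the induction goes through verbatim as in the integer case, which is why the authors simply refer back to~\cite{D}*{Lemma 2.1} in the statement.
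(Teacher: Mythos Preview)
Your proof is correct and follows essentially the same strategy as the paper: both argue by induction on $j$, starting from the base identity $[Y,X^{i/l}]=\tfrac{i}{l}X^{i/l-1}$ established in~\eqref{eq0}. The paper phrases the inductive step via the commutator expansion $[Y^j,X^{i/l}]=[Y,X^{i/l}]Y^{j-1}+Y[Y^{j-1},X^{i/l}]$ while you write $Y^{j+1}X^{i/l}=Y\cdot(Y^jX^{i/l})$ and push $Y$ through, but these are two ways of organizing the same computation.
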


\begin{proof} By induction on $j$. For the inductive step use that
$$
[Y^j,X^{\frac{i}{l}}] = [Y,X^{\frac{i}{l}}] Y^{j-1}+Y[Y^{j-1},X^{\frac{i}{l}}].
$$
The case $j=1$ is true since $[Y,X^{\frac{i}{l}}] =\frac{i}{l} X^{\frac{i}{l}-1}$ by~\eqref{eq0}.
\end{proof}

For $l\in\mathds{N}$ and $j\in\mathds{Z}$, we set
$$
A_{1,j/l}^{(l)}:=\left\{P\in A_1^{(l)}\setminus\{0\} : P\text{ is $(1,-1)$-homogeneous and } v_{1,-1}(P) = j/l\right\}\cup\{0\}.
$$

\begin{remark}\label{graduacion} By Lemma~\ref{le conmutacion en A_1l}, the algebra $A_1^{(l)}$ is $\frac{1}{l}\mathds{Z}$-graded. Its homogeneous component of de\-gree $\frac{j}{l}$ is $\cramped{A_{1,j/l}^{(l)}}$. Moreover, by~\cite{D}*{3.3}, we know that $\cramped{A_{1,0}^{(l)} = K[XY]}$.
\end{remark}

\begin{proposition} (Compare with~\cite{D}*{Lemma 2.4})\label{pr v de un producto} Let $P,Q\in A_1^{(l)}\setminus\{0\}$. The following assertions hold:

\begin{enumerate}

\smallskip

\item $w(PQ) = w(P) + w(Q)$ and $\ov{w}(PQ) =\ov{w}(P) +\ov{w}(Q)$. In particular $PQ\ne 0$.

\smallskip

\item $\ell_{\rho,\sigma}(PQ) =\ell_{\rho,\sigma}(P)\ell_{\rho,\sigma}(Q)$ for all $(\rho,\sigma)\in\mathfrak{V}$.

\smallskip

\item $v_{\rho,\sigma}(PQ) = v_{\rho,\sigma}(P) + v_{\rho,\sigma}(Q)$ for all $(\rho,\sigma)\in\ov{\mathfrak{V}}$.

\smallskip

\item $\st_{\rho,\sigma}(PQ)=\st_{\rho,\sigma}(P)+\st_{\rho,\sigma}(Q)$  for all $(\rho,\sigma)\in\mathfrak{V}$.

\smallskip

\item $\en_{\rho,\sigma}(PQ)=\en_{\rho,\sigma}(P)+\en_{\rho,\sigma}(Q)$ for all $(\rho,\sigma)\in\mathfrak{V}$.

\end{enumerate}
The same properties hold for $P,Q\in L^{(l)}\setminus\{0\}$.
\end{proposition}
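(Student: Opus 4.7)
The plan is to reduce everything to the commutative situation in $L^{(l)}$ and then handle the noncommutative corrections in $A_1^{(l)}$ via Lemma~\ref{le conmutacion en A_1l}. The $L^{(l)}$-case is direct: since $L^{(l)}$ is a Laurent polynomial domain, $\Supp(PQ)$ sits inside the Minkowski sum $\Supp(P)+\Supp(Q)$, and a standard argument---pick the unique point of $\Supp(P)$ maximizing $v_{\rho,\sigma}$, then within its $(\rho,\sigma)$-leading part the unique point maximizing the first coordinate---establishes items (1)--(5) for $L^{(l)}$ without difficulty.

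For $A_1^{(l)}$ the central observation is that, by Lemma~\ref{le conmutacion en A_1l}, the noncommutative correction $Y^j X^{i/l} - X^{i/l} Y^j$ consists of monomials sitting at positions $(i/l - k,\, j - k)$ with $k \geq 1$. Since
\[
v_{\rho,\sigma}(i/l - k,\, j - k) = v_{\rho,\sigma}(i/l,\, j) - k(\rho + \sigma),
\]
every such correction strictly decreases $(\rho,\sigma)$-degree when $\rho + \sigma > 0$, and merely decreases the first coordinate while preserving the $(1,-1)$-degree when $\rho + \sigma = 0$. Consequently, for $(\rho,\sigma) \in \mathfrak{V}$ the difference $\Psi^{(l)}(PQ) - \Psi^{(l)}(P)\,\Psi^{(l)}(Q)$ has $(\rho,\sigma)$-degree strictly below $v_{\rho,\sigma}(P) + v_{\rho,\sigma}(Q)$; comparing leading terms and invoking the $L^{(l)}$-case gives item (2), and item (3) for directions in $\mathfrak{V}$ follows at once.

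To prove (1), I would invoke the $(1,-1)$-grading of Remark~\ref{graduacion}: multiplication respects this grading, so the top $(1,-1)$-homogeneous component of $PQ$ is exactly the product $\ell_{1,-1}(P)\,\ell_{1,-1}(Q)$ computed inside $A_1^{(l)}$. Within this product, Lemma~\ref{le conmutacion en A_1l} shows that the monomial of maximal first coordinate arises uniquely from the $k=0$ contributions of the cross-terms $(X^{i_1/l}Y^{j_1})(X^{i_2/l}Y^{j_2})$, and lands at $w(P) + w(Q)$ with coefficient $\ell_c(P)\,\ell_c(Q) \neq 0$; hence $PQ \neq 0$ and $w(PQ) = w(P) + w(Q)$. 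The parallel argument, using the bottom $(1,-1)$-homogeneous component in place of the top, yields $\ov{w}(PQ) = \ov{w}(P) + \ov{w}(Q)$. Item (3) for $(\rho,\sigma) = (\pm 1, \mp 1)$ is then immediate from (1). Finally, items (4) and (5) follow by combining (1) and (2):
\[
\st_{\rho,\sigma}(PQ) = w\bigl(\ell_{\rho,\sigma}(PQ)\bigr) = w\bigl(\ell_{\rho,\sigma}(P)\,\ell_{\rho,\sigma}(Q)\bigr) = w\bigl(\ell_{\rho,\sigma}(P)\bigr) + w\bigl(\ell_{\rho,\sigma}(Q)\bigr),
\]
using (2) in the middle equality and the max-coordinate analysis from (1) applied to the $(\rho,\sigma)$-homogeneous factors in the last, and symmetrically for $\en_{\rho,\sigma}$. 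The only point needing real care is in (1): one must verify that no accidental cancellation destroys the coefficient at the maximal first coordinate among the many contributions from commuting $Y$'s past $X$'s, and this is precisely what the $k=0$ term in Lemma~\ref{le conmutacion en A_1l} rules out.
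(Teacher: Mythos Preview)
Your proposal is correct and follows essentially the same approach as the paper, which simply states that the result ``follows easily from Lemma~\ref{le conmutacion en A_1l}, using that $\rho+\sigma>0$ if $(\rho,\sigma)\in\mathfrak{V}$.'' You have written out precisely the details the paper leaves implicit: the commutative $L^{(l)}$-case, the fact that the correction terms in Lemma~\ref{le conmutacion en A_1l} drop the $(\rho,\sigma)$-degree by $k(\rho+\sigma)$, the use of the $(1,-1)$-grading from Remark~\ref{graduacion} to handle item~(1) and the boundary directions in item~(3), and the derivation of (4)--(5) from (1) and (2).
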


\begin{proof} For $P,Q\in A_1^{(l)}\setminus\{0\}$ this follows easily from Lemma~\ref{le conmutacion en A_1l},  using that $\rho+\sigma >0$ if $(\rho,\sigma)\in\mathfrak{V}$. The proof for $P,Q\in L^{(l)}\setminus\{0\}$ is easier.
\end{proof}

Let $A,B\in\mathds{R}^2$. We say that $A$ and $B$ are {\em aligned} if $A\times B:=\det\left(\begin{smallmatrix} A\\ B\end{smallmatrix}\right) = 0$.

\begin{definition}\label{def alineados} Let $P,Q\in L^{(l)}\setminus\{0\}$. We say that $P$ and $Q$ are {\em aligned} and write $P\sim Q$, if $w(P)$ and $w(Q)$ are so. Moreover we say that $P,Q\in A_1^{(l)}\setminus\{0\}$ are aligned if $\Psi^{(l)}(P)\sim\Psi^{(l)}(Q)$.
\end{definition}

\begin{remark} Note that:
\begin{itemize}

\smallskip

\item[-] $P\sim Q$ if and only if $\ell_{1,-1}(P)\sim\ell_{1,-1}(Q)$.

\smallskip

\item[-] $\sim$ is not an equivalence relation (it is so restricted to $\{P:w(P)\ne (0,0)\}$).

\smallskip

\item[-] If $P\sim Q$ and $w(P)\ne (0,0)\ne w(Q)$, then $w(P)=\lambda w(Q)$ with $\lambda\ne 0$.

\end{itemize}
\end{remark}

\begin{proposition}\label{pr conmutadores con exponentes fraccinarios} Let $P,Q\in A_1^{(l)}\setminus\{0\}$. The following assertions hold:

\begin{enumerate}

\smallskip

\item $P\nsim Q$ if and only if $w\bigl([P,Q]\bigr) = w(P)+w(Q)-(1,1)$.

\smallskip

\item $\ov{w}(P)\nsim\ov{w}(Q)$ if and only if $\ov w\bigl([P,Q]\bigr) =\ov w(P)+\ov w(Q)-(1,1)$.

\end{enumerate}
\end{proposition}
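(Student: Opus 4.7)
The plan is to use the $\frac{1}{l}\mathds{Z}$-grading of $A_1^{(l)}$ from Remark~\ref{graduacion} to reduce to the $(1,-1)$-leading parts $P_1 := \ell_{1,-1}(P)$ and $Q_1 := \ell_{1,-1}(Q)$. Decomposing $P$ and $Q$ into $(1,-1)$-homogeneous components $P = \sum_i P_i$, $Q = \sum_j Q_j$, each bracket $[P_i, Q_j]$ is homogeneous of degree $v_{1,-1}(P_i) + v_{1,-1}(Q_j)$, so the only contribution to $[P,Q]$ at the top $(1,-1)$-degree $v_{1,-1}(P) + v_{1,-1}(Q)$ is $[P_1, Q_1]$; consequently $\ell_{1,-1}([P,Q]) = [P_1, Q_1]$ whenever the latter is non-zero, and otherwise $v_{1,-1}([P,Q]) < v_{1,-1}(P) + v_{1,-1}(Q)$.

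Next, I would expand $[X^{a/l} Y^b, X^{c/l} Y^d]$ via Lemma~\ref{le conmutacion en A_1l}: the $k = 0$ contribution cancels and the remainder $\sum_{k \geq 1} k!\bigl(\binom{b}{k}\binom{c/l}{k} - \binom{d}{k}\binom{a/l}{k}\bigr) X^{(a+c)/l - k} Y^{b+d-k}$, summed over pairs of monomials from $P_1$ and $Q_1$, produces terms all lying on the $(1,-1)$-line $v = v_{1,-1}(P) + v_{1,-1}(Q)$. Along this line, the $x$-coordinate $(a+c)/l - k$ attains its maximum $w_x(P) + w_x(Q) - 1$ uniquely at $k = 1$, $(a/l, b) = w(P)$, $(c/l, d) = w(Q)$: any $k \geq 2$ would demand $a + c > l(w_x(P) + w_x(Q))$, contradicting $a \leq l w_x(P)$ and $c \leq l w_x(Q)$. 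Hence the coefficient of $X^{w_x(P)+w_x(Q)-1} Y^{w_y(P)+w_y(Q)-1}$ in $[P_1, Q_1]$ is precisely $\ell_c(P)\ell_c(Q)\bigl(w_y(P) w_x(Q) - w_y(Q) w_x(P)\bigr) = -\ell_c(P)\ell_c(Q)\bigl(w(P)\times w(Q)\bigr)$.

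For the forward direction of (1), $P \nsim Q$ gives $w(P)\times w(Q) \neq 0$, so this coefficient is non-zero, and since it sits at the rightmost point of the top $(1,-1)$-line of $[P_1, Q_1]$, it realizes $w([P,Q]) = w(P) + w(Q) - (1,1)$. Conversely, if $P \sim Q$ the coefficient vanishes; then either $[P_1, Q_1] = 0$, in which case $v_{1,-1}([P,Q])$ drops strictly and $w([P,Q])$ lies on a lower $(1,-1)$-line than $w(P)+w(Q)-(1,1)$ (or $[P,Q] = 0$), or $w([P_1, Q_1])$ has strictly smaller $x$-coordinate than $w_x(P) + w_x(Q) - 1$. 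In every case $w([P,Q]) \neq w(P)+w(Q)-(1,1)$.

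Part (2) is proved by the entirely analogous argument with $\ell_{-1,1}$ replacing $\ell_{1,-1}$: all terms of $[\ov{P}_1, \ov{Q}_1]$ lie on the $(-1,1)$-line $v_{-1,1} = v_{-1,1}(P) + v_{-1,1}(Q)$, and the topmost point (largest $y$-coordinate) is uniquely realized at $k = 1$ and position $\ov{w}(P) + \ov{w}(Q) - (1,1)$, with coefficient $-\ov{\ell}_c(P)\ov{\ell}_c(Q)\bigl(\ov{w}(P)\times\ov{w}(Q)\bigr)$. The main obstacle in both parts is the combinatorial uniqueness argument identifying the single pair of monomials and the single $k$ contributing to the extremal position on the leading line; once this is in place, both equivalences reduce to the vanishing dichotomy for the cross product.
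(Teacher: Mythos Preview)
Your argument is correct and follows essentially the same route as the paper's proof: both use Lemma~\ref{le conmutacion en A_1l} to expand the commutator and isolate the coefficient at the position $w(P)+w(Q)-(1,1)$, which is $\pm\ell_c(P)\ell_c(Q)\bigl(w(P)\times w(Q)\bigr)$. The paper compresses the whole thing into a one-line computation of $\ell_t([P,Q])$, whereas you spell out explicitly the $(1,-1)$-graded decomposition and the combinatorial uniqueness of the extremal monomial; this is exactly the justification the paper's terse ``By Lemma~\ref{le conmutacion en A_1l}'' leaves to the reader.
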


\begin{proof} We prove the first statement, and leave the second one, which is similar, to the reader. Set $w(P) =\left(\frac{r}{l},s\right)$ and $w(Q) =\left(\frac{u}{l},v\right)$. By Lemma~\ref{le conmutacion en A_1l}
$$
\ell_t\bigl([P,Q]\bigr) =\Biggl(\binom{s}{1}\binom{u/l}{1} -\binom{v}{1}\binom{r/l}{1}\Biggr)\ell_c(P)\ell_c(Q)X^{\frac{r+u}{l}-1}Y^{s+v-1}
$$
if and only if
$$
\binom{s}{1}\binom{u/l}{1} -\binom{v}{1}\binom{r/l}{1} = (r/l,s)\times (u/l,v)\ne 0.
$$
So, $w\bigl([P,Q]\bigr) = w(P)+w(Q)-(1,1)$ if and only if $P\nsim Q$, as desired.
\end{proof}

\begin{remark}\label{re v de un conmutador} For all $P,Q\in A_1^{(l)}\setminus\{0\}$ and each $(\rho,\sigma)\in\ov{\mathfrak{V}}$,
$$
v_{\rho,\sigma}([P,Q])\le v_{\rho,\sigma}(P) + v_{\rho,\sigma}(Q) - (\rho+\sigma).
$$
\end{remark}

\begin{remark}\label{re v de un conmutador con 1,-1} If $P,Q\in A_1^{(l)}\setminus\{0\}$ satisfy $v_{1,-1}(P)\le 0$ and $v_{1,-1}(Q)\le 0$, then $v_{1,-1}([Q,P])< 0$. In fact, write $P = P_0+P_1$ and $P = Q_0+Q_1$ with $P_0$ and $Q_0$ the $(1,-1)$-homogeneous components of degree $0$ of $P$ and $Q$ respectively. Since, by Remark~\ref{graduacion} we have $[P_0,Q_0]=0$, it follows from the above remark that
$$
v_{1,-1}([P,Q]) = v_{1,-1}([P_0,Q_1] + [P_1,Q]) < 0.
$$
\end{remark}

\section{The bracket associated with a direction}

\setcounter{equation}{0}

In this section we define bracket $[P,Q]_{\rho,\sigma}$ for each direction $(\rho,\sigma)$ and each $P,Q\in A_1^{(l)}$. This bracket is essentially the commutator of the highest $(\rho,\sigma)$-degree terms of its arguments. It coincides, up to signs, with the usual Poisson bracket $\{\ell_{\rho,\sigma}(P), \ell_{\rho,\sigma}(P)\}$ as defined in~\cite{J}*{p.~599}. Then, in Proposition~\ref{extremosnoalineados}, we analyze the relation of the $(\rho,\sigma)$-bracket with $\st_{\rho,\sigma}$ and $\en_{\rho,\sigma}$. Finally we prove Theorem~\ref{f[] en A_1^{(l)}}, which states that, under some mild conditions,  if $[P,Q]_{\rho,\sigma}=0$, then $\ell_{\rho,\sigma}(P)$ and $\ell_{\rho,\sigma}(Q)$ are scalar multiples of powers of one element $R$.

\begin{definition}\label{def rho-sigma proporcionales} Let $(\rho,\sigma)\in\ov{\mathfrak{V}}$ and $P,Q\in A_1^{(l)}\setminus\{0\}$. We say that $P$ and $Q$ are {\em $(\rho,\sigma)$-pro\-portional} if $v_{\rho,\sigma}([P,Q]) < v_{\rho,\sigma}(P) + v_{\rho,\sigma}(Q) - (\rho+\sigma)$.
\end{definition}

\begin{definition}\label{def rho-sigma corchete} Let $l\in\mathds{N}$ and $(\rho,\sigma)\in\ov{\mathfrak{V}}$. We define
$$
[-,-]_{\rho,\sigma}\colon\bigl(A_1^{(l)}\setminus\{0\}\bigr)\times\bigl(A_1^{(l)} \setminus\{0\}\bigr)\longrightarrow L^{(l)}_{\rho,\sigma},
$$
by
$$
[P,Q]_{\rho,\sigma}:=\begin{cases} 0 &\text{if $P$ and $Q$ are $(\rho,\sigma)$-proportional,}\\ \ell_{\rho,\sigma}([P,Q]) &\text{if $P$ and $Q$ are not $(\rho,\sigma)$-proportional.}
\end{cases}
$$
\end{definition}

\smallskip

From now on in order to simplify expressions we set $\mathfrak{V}^0 := \{(\rho,\sigma)\in \mathfrak{V} : \rho > 0\}$.

\begin{lemma}\label{pr conmutadores de elementos homogeneos con exponentes fraccinarios} Let $(\rho,\sigma)\in \mathfrak{V}^0$ and let $P$ and $Q$ be $(\rho,\sigma)$-homogeneous elements of $A_1^{(l)}\setminus\{0\}$.

\begin{enumerate}

\smallskip

\item If $w(P)\nsim w(Q)$, then $[P,Q]\ne 0$ and $w([P,Q]) = w\bigl(\ell_{\rho,\sigma}([P,Q])\bigr)$.

\smallskip

\item If $w(P)+w(Q)-(1,1)=w\bigl(\ell_{\rho,\sigma}([P,Q])\bigr)$, then $w(P)\nsim w(Q)$.

\smallskip

\item If $\ov{w}(P)\nsim\ov{w}(Q)$, then $[P,Q]\ne 0$ and $\ov{w}([P,Q])=\ov{w}\bigl(\ell_{\rho,\sigma} ([P,Q])\bigr)$.

\smallskip

\item If $\ov w(P)+\ov w(Q)-(1,1)=\ov w\bigl(\ell_{\rho,\sigma}([P,Q])\bigr)$, then $\ov w(P)\nsim\ov w(Q)$.

\end{enumerate}

\end{lemma}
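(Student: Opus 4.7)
The plan is to reduce every claim to Proposition~\ref{pr conmutadores con exponentes fraccinarios}, using two structural observations. First, since $P$ and $Q$ are $(\rho,\sigma)$-homogeneous, the points $w(P),w(Q)$ lie on the $(\rho,\sigma)$-leading lines of $P$ and $Q$, so $v_{\rho,\sigma}(w(P))=v_{\rho,\sigma}(P)$ and $v_{\rho,\sigma}(w(Q))=v_{\rho,\sigma}(Q)$; the same identities hold with $\ov{w}$. Second, since $(\rho,\sigma)\in\mathfrak{V}^0$ is linearly independent from both $(1,-1)$ and $(-1,1)$, a line of constant $(\rho,\sigma)$-value meets a line of constant $(1,-1)$- or $(-1,1)$-value in at most one point; equivalently, on a single $(\rho,\sigma)$-line the $(1,-1)$- and $(-1,1)$-values vary strictly, so tie-breaking in the definitions of $w$ and $\ov{w}$ is automatic.

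For (1), assume $w(P)\nsim w(Q)$. By Proposition~\ref{pr conmutadores con exponentes fraccinarios}(1), $[P,Q]\neq 0$ and $w([P,Q])=w(P)+w(Q)-(1,1)$. Applying $v_{\rho,\sigma}$ and using homogeneity gives $v_{\rho,\sigma}(w([P,Q]))=v_{\rho,\sigma}(P)+v_{\rho,\sigma}(Q)-(\rho+\sigma)$, which by Remark~\ref{re v de un conmutador} is the maximum possible value of $v_{\rho,\sigma}([P,Q])$. Therefore $w([P,Q])$ attains this maximum and belongs to $\Supp(\ell_{\rho,\sigma}([P,Q]))$. As it also attains $v_{1,-1}([P,Q])$, and as the second observation forces it to be the unique point of $\Supp(\ell_{\rho,\sigma}([P,Q]))$ achieving the maximal $(1,-1)$-value on the $(\rho,\sigma)$-leading line, one gets $w([P,Q])=w(\ell_{\rho,\sigma}([P,Q]))$.

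For (2), I would argue the contrapositive. Assume $w(P)\sim w(Q)$. Homogeneity together with $(\rho,\sigma)\neq(1,-1)$ implies that $\ell_{1,-1}(P)$ and $\ell_{1,-1}(Q)$ are the single monomials $\ell_t(P)$ and $\ell_t(Q)$. The computation from the proof of Proposition~\ref{pr conmutadores con exponentes fraccinarios} then shows that the coefficient of the expected leading monomial of $[P,Q]$ at position $w(P)+w(Q)-(1,1)$ is proportional to $w(P)\times w(Q)=0$; hence $v_{1,-1}([P,Q])<v_{1,-1}(P)+v_{1,-1}(Q)=v_{1,-1}\bigl(w(P)+w(Q)-(1,1)\bigr)$. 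Since every point of $\Supp(\ell_{\rho,\sigma}([P,Q]))\subseteq\Supp([P,Q])$ has $(1,-1)$-value at most $v_{1,-1}([P,Q])$, we conclude $w(\ell_{\rho,\sigma}([P,Q]))\neq w(P)+w(Q)-(1,1)$, as required.

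Items (3) and (4) will follow from the same two arguments with $\ov{w}$ and $v_{-1,1}$ in place of $w$ and $v_{1,-1}$, invoking Proposition~\ref{pr conmutadores con exponentes fraccinarios}(2) instead of (1); the condition $(\rho,\sigma)\in\mathfrak{V}^0$ still supplies the required transversality because $\rho+\sigma>0$ prevents $(\rho,\sigma)$ from being proportional to $(-1,1)$. The only delicate point I foresee is notational bookkeeping: one must keep the outer grading $(\rho,\sigma)$ (which governs $\ell_{\rho,\sigma}$) cleanly separated from the inner grading $(1,-1)$ or $(-1,1)$ (which governs $w$ or $\ov{w}$), and the main conceptual step is recognising that the uniqueness of the intersection of a $(\rho,\sigma)$-line with a $(1,-1)$- or $(-1,1)$-line is what upgrades the equality of $(1,-1)$-values into an equality of points.
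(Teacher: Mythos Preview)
Your argument for (1) is correct and in fact conceptually cleaner than the paper's: instead of recomputing the commutator explicitly, you invoke Proposition~\ref{pr conmutadores con exponentes fraccinarios}(1) as a black box, observe that the resulting point $w([P,Q])=w(P)+w(Q)-(1,1)$ lies on the top $(\rho,\sigma)$-line (by homogeneity and Remark~\ref{re v de un conmutador}), and then use the transversality of the $(\rho,\sigma)$- and $(1,-1)$-gradings to identify it with $w(\ell_{\rho,\sigma}([P,Q]))$. The paper instead writes $P$ and $Q$ explicitly and expands $[P,Q]$ via Lemma~\ref{le conmutacion en A_1l}; both reach the same destination.

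There is, however, a genuine gap in your argument for (2). From the vanishing of the coefficient at $w(P)+w(Q)-(1,1)$ you infer $v_{1,-1}([P,Q])<v_{1,-1}(P)+v_{1,-1}(Q)$, but this inference is false: the commutator of the single monomials $\ell_t(P)$ and $\ell_t(Q)$ produces, via Lemma~\ref{le conmutacion en A_1l}, a whole string of terms $X^{(r+u)/l-k}Y^{s+v-k}$ for $k\ge 1$, all sharing the \emph{same} $(1,-1)$-value $v_{1,-1}(P)+v_{1,-1}(Q)$. The $k=1$ term vanishes when $w(P)\sim w(Q)$, but the $k\ge 2$ terms need not. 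For a concrete counterexample take the monomials $P=X^2Y$ and $Q=X^4Y^2$: then $w(P)\sim w(Q)$, yet $[P,Q]=-2X^4Y$ has $v_{1,-1}([P,Q])=3=v_{1,-1}(P)+v_{1,-1}(Q)$.

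The fix is simple and already implicit in what you wrote: the vanishing of the coefficient shows directly that $w(P)+w(Q)-(1,1)\notin\Supp([P,Q])$, hence it is not in $\Supp(\ell_{\rho,\sigma}([P,Q]))$ either, and so cannot equal $w(\ell_{\rho,\sigma}([P,Q]))$. (One must check that no other $(i,j,k)$ in the expansion contributes to this position; since $\rho+\sigma>0$, the equations $(i+j)\sigma/\rho+k=1$ and $i+j-k=-1$ force $i=j=0$, $k=1$.) This is exactly how the paper argues: it reads off from the explicit expansion that the monomial at $w(P)+w(Q)-(1,1)$ in $\ell_{\rho,\sigma}([P,Q])$ has coefficient $\lambda_0\mu_0 c_{001}$, so the hypothesis of (2) forces $c_{001}\ne 0$. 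The same correction applies to your sketch of (4).
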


\begin{proof} We only prove statements~(1) and~(2), since the other ones are similar. Write
$$
\qquad P =\sum_{i=0}^{\alpha}\lambda_i X^{\frac{r}{l}-\frac{i\sigma} {\rho}} Y^{s+i}\quad\text{and}\quad Q =\sum_{j=0}^{\beta}\mu_j X^{\frac{u}{l}-\frac{j\sigma}{\rho}} Y^{v+j},
$$
with $\lambda_0,\lambda_{\alpha},\mu_0,\mu_{\beta}\ne 0$. Note that $\rho>0$ implies $w(P) = (r/l,s)$ and $w(Q) = (u/l,v)$. Since, by Lemma~\ref{le conmutacion en A_1l},
$$
X^{\frac{i}{l}}Y^j X^{\frac{i'}{l}}Y^{j'} =\sum_{k=0}^j  k!\binom{j}{k}\binom{i'/l}{k} X^{\frac{i+i'}{l}-k}Y^{j+j'-k},
$$
we obtain that
\begin{equation}
[P,Q] =\sum_{i=0}^{\alpha}\sum_{j=0}^{\beta}\sum_{k=0}^{\max\{s+i,v+j\}}\lambda_i\mu_j c_{ijk} X^{\frac{r+u}{l}-\frac{(i+j)\sigma} {\rho}-k} Y^{s+v+i+j-k},\label{cheee}
\end{equation}
where
$$
c_{ijk} = k!\binom{s+i}{k}\binom{u/l-j\sigma/\rho}{k} - k!\binom{v+j}{k}\binom{r/l-i\sigma/\rho}{k}.
$$
Note that $c_{ij0} = 0$. Assume now that $w(P)\nsim w(Q)$. Since $\rho>0$ this implies $c_{001}\ne 0$, and so
$$
\ell_{\rho,\sigma}([P,Q]) =\sum_{i=0}^{\alpha}\sum_{j=0}^{\beta}\lambda_i\mu_j c_{ij1} x^{\frac{r+u}{l}-\frac{(i+j)\sigma} {\rho}-1} y^{s+v+i+j-1},
$$
because $\rho+\sigma>0$. Using again that $\rho>0$ and $\rho+\sigma>0$, we obtain that
$$
w([P,Q]) =\left(\frac{r+u}{l}-1,s+v-1\right) = w\bigl(\ell_{\rho,\sigma}([P,Q])\bigr),
$$
which proves~(1). For statement~(2) note that since $\rho>0$ and $\rho+\sigma>0$, it follows from~\eqref{cheee}, that
$$
w(P)+w(Q)-(1,1)=w\bigl(\ell_{\rho,\sigma}([P,Q])\bigr)\Longrightarrow c_{001}\ne 0,
$$
and then $w(P)\nsim w(Q)$.
\end{proof}

\begin{proposition}\label{extremosnoalineados} Let $P,Q,R\!\in\! A_1^{(l)}\setminus \{0\}$ be such that $[P,Q]_{\rho,\sigma}\!=\!\ell_{\rho,\sigma}(R)$, where $(\rho,\sigma)\in\mathfrak{V}^0$. We have:

\begin{enumerate}

\smallskip

\item $\st_{\rho,\sigma}(P)\nsim\st_{\rho,\sigma}(Q)$ if and only if $\st_{\rho,\sigma}(P) +\st_{\rho,\sigma}(Q) - (1,1) =\st_{\rho,\sigma}(R)$.

\smallskip

\item $\en_{\rho,\sigma}(P)\nsim\en_{\rho,\sigma}(Q)$ if and only if $\en_{\rho,\sigma}(P)+\en_{\rho,\sigma}(Q)-(1,1)=\en_{\rho,\sigma}(R)$.

\end{enumerate}
\end{proposition}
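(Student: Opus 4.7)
The plan is to reduce the statement to Lemma~\ref{pr conmutadores de elementos homogeneos con exponentes fraccinarios} applied to the $(\rho,\sigma)$-homogeneous top parts $P':= \ell_{\rho,\sigma}(P)$ and $Q':= \ell_{\rho,\sigma}(Q)$ (viewed as elements of $A_1^{(l)}$ in the obvious way, so that $\Psi^{(l)}(P')$ agrees with the leading term in $L^{(l)}$). Since $R\ne 0$ gives $[P,Q]_{\rho,\sigma}\ne 0$, the definition of the bracket guarantees that $P$ and $Q$ are not $(\rho,\sigma)$-proportional and that $\ell_{\rho,\sigma}([P,Q])=\ell_{\rho,\sigma}(R)$, with common $(\rho,\sigma)$-degree $v_{\rho,\sigma}(P)+v_{\rho,\sigma}(Q)-(\rho+\sigma)$.

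The first key step is to verify that $\ell_{\rho,\sigma}([P,Q])=\ell_{\rho,\sigma}([P',Q'])$. Writing $P=P'+\wt P$ and $Q=Q'+\wt Q$ with $v_{\rho,\sigma}(\wt P)<v_{\rho,\sigma}(P)$ and $v_{\rho,\sigma}(\wt Q)<v_{\rho,\sigma}(Q)$, the commutator splits as $[P,Q]=[P',Q']+[P',\wt Q]+[\wt P,Q]$, and Remark~\ref{re v de un conmutador} bounds $v_{\rho,\sigma}$ of each of the last two terms strictly below $v_{\rho,\sigma}(P)+v_{\rho,\sigma}(Q)-(\rho+\sigma)$. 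Hence the leading $(\rho,\sigma)$-part of $[P,Q]$ is supplied entirely by $[P',Q']$, and since this equals $\ell_{\rho,\sigma}(R)\neq 0$, also $w\bigl(\ell_{\rho,\sigma}([P',Q'])\bigr)=\st_{\rho,\sigma}(R)$.

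For statement~(1), note that by the definitions $w(P')=\st_{\rho,\sigma}(P)$ and $w(Q')=\st_{\rho,\sigma}(Q)$. If $\st_{\rho,\sigma}(P)\nsim\st_{\rho,\sigma}(Q)$, then Lemma~\ref{pr conmutadores de elementos homogeneos con exponentes fraccinarios}(1) applied to $P',Q'$ gives $w([P',Q'])=w\bigl(\ell_{\rho,\sigma}([P',Q'])\bigr)=\st_{\rho,\sigma}(R)$, while Proposition~\ref{pr conmutadores con exponentes fraccinarios}(1) yields $w([P',Q'])=w(P')+w(Q')-(1,1)=\st_{\rho,\sigma}(P)+\st_{\rho,\sigma}(Q)-(1,1)$, giving the desired equality. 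Conversely, if $\st_{\rho,\sigma}(P)+\st_{\rho,\sigma}(Q)-(1,1)=\st_{\rho,\sigma}(R)$, then $w(P')+w(Q')-(1,1)=w\bigl(\ell_{\rho,\sigma}([P',Q'])\bigr)$, and Lemma~\ref{pr conmutadores de elementos homogeneos con exponentes fraccinarios}(2) immediately yields $w(P')\nsim w(Q')$, i.e., $\st_{\rho,\sigma}(P)\nsim\st_{\rho,\sigma}(Q)$. Statement~(2) follows identically using parts~(3) and~(4) of Lemma~\ref{pr conmutadores de elementos homogeneos con exponentes fraccinarios}, Proposition~\ref{pr conmutadores con exponentes fraccinarios}(2), and the fact that $\ov{w}$ of the relevant leading terms equals $\en_{\rho,\sigma}$ of the corresponding element.

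The main obstacle is not deep: it is just the bookkeeping in the first step, justifying the passage from $[P,Q]$ to $[P',Q']$ at the top $(\rho,\sigma)$-degree; once this is in place, everything reduces mechanically to the previously established lemmas about commutators of $(\rho,\sigma)$-homogeneous elements. The hypothesis $(\rho,\sigma)\in\mathfrak{V}^0$, i.e.\ $\rho>0$, is needed only so that $\st_{\rho,\sigma}$ and $\en_{\rho,\sigma}$ are defined and so that Lemma~\ref{pr conmutadores de elementos homogeneos con exponentes fraccinarios} is applicable.
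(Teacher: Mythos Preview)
Your proof is correct and follows essentially the same approach as the paper: reduce to the $(\rho,\sigma)$-homogeneous top parts via the commutator decomposition and Remark~\ref{re v de un conmutador}, then apply Lemma~\ref{pr conmutadores de elementos homogeneos con exponentes fraccinarios} together with Proposition~\ref{pr conmutadores con exponentes fraccinarios}. The only cosmetic difference is that the paper names the homogeneous lifts $P_1,Q_1$ rather than writing $(\Psi^{(l)})^{-1}(\ell_{\rho,\sigma}(P))$, but the content is identical.
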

\begin{figure}[h]
\centering
\begin{tikzpicture}
\draw[step=.5cm,gray,very thin] (0,0) grid (4.3,4.7);
\draw [->] (0,0) -- (5,0) node[anchor=north]{$X$};
\draw [->] (0,0) --  (0,5) node[anchor=east]{$Y$};
\draw (0,0.5) -- (1.5,2) node[fill=white,right=2pt]{$\st_{2,1}(P)$} -- (0.5,4)  node[fill=white, above right=0.5pt]{$\en_{2,1}(P)$} -- (0,4);
\draw[thick] (1.5,2) -- (0.5,4);
\filldraw [gray] (1.5,2)    circle (2pt)
                  (0.5,4)   circle (2pt)
                  (2.5,1)   circle (2pt)
                  (3.5,2.5) circle (2pt)
                  (2.5,4.5) circle (2pt);
\draw (2.5,0) -- (2.5,1) node[fill=white,right=2pt]{$\st_{2,1}(Q)=\en_{2,1}(Q)$} -- (1.5,1.5) -- (0,1.5);
\draw[thick] (3.5,2.5)node[fill=white,right=2pt]{$\st_{2,1}(R)$} -- node[fill=white,above right=1pt]{$\ell_{2,1}(R)$} (2.5,4.5) node[fill=white, above right=1pt]{$\en_{2,1}(R)$};
\end{tikzpicture}
\caption{Proposition~\ref{extremosnoalineados}.}
\end{figure}

\begin{proof} We only are going to prove the first statement since the second one is similar. Let $P_1$ and $Q_1$ be $(\rho,\sigma)$-homogeneous elements of $A_1^{(l)}\setminus\{0\}$, such that
\begin{equation}
v_{\rho,\sigma}(P-P_1) < v_{\rho,\sigma}(P_1)\quad\text{and}\quad v_{\rho,\sigma}(Q-Q_1) < v_{\rho,\sigma}(Q_1).\label{eq13}
\end{equation}
Since
$$
[P,Q] = [P_1,Q_1] + [P_1,Q-Q_1] + [P-P_1,Q],
$$
and, by Remark~\ref{re v de un conmutador},
\begin{align*}
v_{\rho,\sigma}([P_1,Q-Q_1])&\le v_{\rho,\sigma}(P_1) + v_{\rho,\sigma}(Q-Q_1) - (\rho+\sigma)\\
& < v_{\rho,\sigma}(P_1) + v_{\rho,\sigma}(Q_1) - (\rho+\sigma)\\
& = v_{\rho,\sigma}(P) + v_{\rho,\sigma}(Q) - (\rho+\sigma)
\shortintertext{and}
v_{\rho,\sigma}([P-P_1,Q]) & < v_{\rho,\sigma}(P) + v_{\rho,\sigma}(Q) - (\rho+\sigma),
\end{align*}
from the fact that $P$ and $Q$ are not $(\rho,\sigma)$-proportional, it follows that
\begin{equation}
v_{\rho,\sigma}([P,Q]-[P_1,Q_1]) < v_{\rho,\sigma}([P,Q]).\label{eq14}
\end{equation}
By inequalities~\eqref{eq13} and~\eqref{eq14},
$$
\ell_{\rho,\sigma}(P) =\ell_{\rho,\sigma}(P_1),\quad\ell_{\rho,\sigma}(Q) =\ell_{\rho,\sigma}(Q_1)\quad\text{and}\quad\ell_{\rho,\sigma}([P,Q]) =\ell_{\rho,\sigma}([P_1,Q_1]),
$$
which implies
\begin{align*}
&\st_{\rho,\sigma}(P) =\st_{\rho,\sigma}(P_1) = w(P_1),\qquad\st_{\rho,\sigma}(Q) =\st_{\rho,\sigma}(Q_1) = w(Q_1)
\shortintertext{and}
& w\bigl(\ell_{\rho,\sigma}([P_1,Q_1])\bigr) = w\bigl(\ell_{\rho,\sigma}([P,Q])\bigr) = w\bigl(\ell_{\rho,\sigma}(R)\bigr) =\st_{\rho,\sigma}(R).
\end{align*}
Consequently, if $\st_{\rho,\sigma}(P)\nsim\st_{\rho,\sigma}(Q)$, then by statement~(1) of Proposition~\ref{pr conmutadores con exponentes fraccinarios} and statement~(1) of Lemma~\ref{pr conmutadores de elementos homogeneos con exponentes fraccinarios},
$$
\st_{\rho,\sigma}(P)+\st_{\rho,\sigma}(Q)-(1,1) = w([P_1,Q_1]) =\st_{\rho,\sigma}(R),
$$
as desired. Conversely, if $\st_{\rho,\sigma}(P)+\st_{\rho,\sigma}(Q)-(1,1)=\st_{\rho,\sigma}(R)$, then
$$
w(P_1)+w(Q_1)-(1,1)= w\bigl(\ell_{\rho,\sigma}([P_1,Q_1])\bigr),
$$
which, by statement~(2) of Lemma~\ref{pr conmutadores de elementos homogeneos con exponentes fraccinarios}, implies $w(P_1)\nsim w(Q_1)$, as we want.
\end{proof}

\begin{proposition}\label{calculo del corchete} Let $(\rho,\sigma)\in\mathfrak{V}^0$ and $P,Q\in A_1^{(l)}\setminus\{0\}$. If
$$
\qquad\ell_{\rho,\sigma}(P) =\sum_{i=0}^{\alpha}\lambda_i x^{\frac{r}{l}-\frac{i\sigma} {\rho}} y^{s+i}\quad\text{and}\quad\ell_{\rho,\sigma}(Q) =\sum_{j=0}^{\beta}\mu_j x^{\frac{u}{l}-\frac{j\sigma}{\rho}} y^{v+j},
$$
with $\lambda_0,\lambda_{\alpha},\mu_0,\mu_{\beta}\ne 0$, then
$$
[P,Q]_{\rho,\sigma} =\sum\lambda_i\mu_j c_{ij} x^{\frac{r+u}{l}-\frac{(i+j)\sigma} {\rho}-1} y^{s+v+i+j-1},
$$
where $c_{ij} =\bigl(\frac{u}{l}-\frac{j\sigma}{\rho},v+j\bigr)\times\bigl(\frac{r}{l} -\frac{i\sigma}{\rho},s+i\bigl)$.
\end{proposition}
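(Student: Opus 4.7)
The plan is to reduce the computation to the case where $P$ and $Q$ are themselves $(\rho,\sigma)$-homogeneous, and then to read off the $(\rho,\sigma)$-leading term from the explicit expansion already performed inside the proof of Lemma~\ref{pr conmutadores de elementos homogeneos con exponentes fraccinarios}.

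First I would pick $(\rho,\sigma)$-homogeneous elements $P_1,Q_1 \in A_1^{(l)}\setminus\{0\}$ with $\Psi^{(l)}(P_1) = \ell_{\rho,\sigma}(P)$ and $\Psi^{(l)}(Q_1) = \ell_{\rho,\sigma}(Q)$, and write $P = P_1 + P'$, $Q = Q_1 + Q'$ with $v_{\rho,\sigma}(P') < v_{\rho,\sigma}(P_1)$ and $v_{\rho,\sigma}(Q') < v_{\rho,\sigma}(Q_1)$. Expanding $[P,Q] = [P_1,Q_1] + [P_1,Q'] + [P',Q]$ and invoking Remark~\ref{re v de un conmutador} term by term, exactly as in the first half of the proof of Proposition~\ref{extremosnoalineados}, yields
$$
v_{\rho,\sigma}\bigl([P,Q] - [P_1,Q_1]\bigr) < v_{\rho,\sigma}(P) + v_{\rho,\sigma}(Q) - (\rho+\sigma).
$$
Consequently $P$ and $Q$ are $(\rho,\sigma)$-proportional if and only if $P_1$ and $Q_1$ are, and when they are not, $\ell_{\rho,\sigma}([P,Q]) = \ell_{\rho,\sigma}([P_1,Q_1])$. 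Thus the whole question is reduced to computing $\ell_{\rho,\sigma}([P_1,Q_1])$.

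Next I would expand $[P_1,Q_1]$ using Lemma~\ref{le conmutacion en A_1l} termwise. This is precisely the calculation carried out in equation~\eqref{cheee} of the proof of Lemma~\ref{pr conmutadores de elementos homogeneos con exponentes fraccinarios}, giving
$$
[P_1,Q_1] = \sum_{i,j,k}\lambda_i\mu_j c_{ijk} X^{\frac{r+u}{l}-\frac{(i+j)\sigma}{\rho}-k} Y^{s+v+i+j-k},
$$
with $c_{ij0}=0$ and $c_{ij1} = (s+i)(u/l - j\sigma/\rho) - (v+j)(r/l - i\sigma/\rho)$. A direct computation gives the $(\rho,\sigma)$-degree of the $(i,j,k)$ summand as $\rho(r+u)/l + \sigma(s+v) - (\rho+\sigma)k$; since $\rho+\sigma>0$, this is maximized by the smallest $k$ producing a nonzero coefficient, and since $c_{ij0}\equiv 0$, the potential leading contribution comes from $k=1$. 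Observing that $c_{ij1}$ coincides with the cross product $c_{ij} = (u/l - j\sigma/\rho, v+j)\times (r/l - i\sigma/\rho, s+i)$ in the statement finishes the identification.

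Finally I would dispose of the dichotomy between the two branches of Definition~\ref{def rho-sigma corchete}. If some $c_{ij}\neq 0$, then the $k=1$ slice is the genuine $(\rho,\sigma)$-leading part of $[P_1,Q_1]$, so $v_{\rho,\sigma}([P,Q]) = v_{\rho,\sigma}(P)+v_{\rho,\sigma}(Q)-(\rho+\sigma)$, $P$ and $Q$ are not $(\rho,\sigma)$-proportional, and $[P,Q]_{\rho,\sigma}$ equals the displayed sum. If instead all $c_{ij}$ vanish, the $k=1$ contributions cancel, so $v_{\rho,\sigma}([P_1,Q_1]) < v_{\rho,\sigma}(P)+v_{\rho,\sigma}(Q)-(\rho+\sigma)$, hence $P$ and $Q$ are $(\rho,\sigma)$-proportional and $[P,Q]_{\rho,\sigma}=0$, which still matches the formula. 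The only real obstacle is bookkeeping: keeping the reduction to $(P_1,Q_1)$ compatible with the piecewise definition of $[-,-]_{\rho,\sigma}$ while extracting the correct slice of~\eqref{cheee}; once this is in place, the proposition is a direct repackaging of the calculation already available in Lemma~\ref{pr conmutadores de elementos homogeneos con exponentes fraccinarios}.
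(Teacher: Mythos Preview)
Your proposal is correct and follows essentially the same approach as the paper: both reduce to the $(\rho,\sigma)$-homogeneous parts via Remark~\ref{re v de un conmutador}, expand the commutator with Lemma~\ref{le conmutacion en A_1l}, and identify the $k=1$ slice as the only possible contribution at degree $v_{\rho,\sigma}(P)+v_{\rho,\sigma}(Q)-(\rho+\sigma)$. The only cosmetic difference is that you cite the expansion~\eqref{cheee} from the proof of Lemma~\ref{pr conmutadores de elementos homogeneos con exponentes fraccinarios} rather than rewriting it, and you make the two-branch case analysis for Definition~\ref{def rho-sigma corchete} more explicit than the paper does.
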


\begin{proof} Write
$$
\qquad P =\sum_{i=0}^{\alpha}\lambda_i X^{\frac{r}{l}-\frac{i\sigma} {\rho}} Y^{s+i} + R_P\quad\text{and}\quad Q =\sum_{j=0}^{\beta}\mu_j X^{\frac{u}{l}-\frac{j\sigma}{\rho}} Y^{v+j} + R_Q.
$$
Since $v_{\rho,\sigma}(R_P) < v_{\rho,\sigma}(P)$ and $v_{\rho,\sigma}(R_Q) < v_{\rho,\sigma}(Q)$, from Re\-mark~\ref{re v de un conmutador} it follows that
\begin{equation}
[P,Q] =\sum_{i=0}^{\alpha}\sum_{j=0}^{\beta}\lambda_i\mu_j\left[X^{\frac{r}{l} -\frac{i\sigma}{\rho}} Y^{s+i}, X^{\frac{u}{l}-\frac{j\sigma}{\rho}} Y^{v+j}\right] + R,
\label{eq1}
\end{equation}
with $v_{\rho,\sigma}(R) < v_{\rho,\sigma}(P) + v_{\rho,\sigma}(Q) - (\rho +\sigma)$. On the other hand, since, by Lemma~\ref{le conmutacion en A_1l},
$$
X^{\frac{i}{l}}Y^j X^{\frac{i'}{l}}Y^{j'} =\sum_{k=0}^j  k!\binom{j}{k}\binom{i'/l}{k} X^{\frac{i+i'}{l}-k}Y^{j+j'-k},
$$
and $\rho+\sigma>0$, we have
\begin{equation}
\left[X^{\frac{r}{l} -\frac{i\sigma}{\rho}} Y^{s+i}, X^{\frac{u}{l}-\frac{j\sigma}{\rho}} Y^{v+j}\right] = c_{ij} X^{\frac{r+u}{l}-\frac{(i+j)\sigma} {\rho}-1} Y^{s+v+i+j-1} + R_{ij},\label{eq2}
\end{equation}
with $v_{\rho,\sigma}(R_{ij}) < v_{\rho,\sigma}(P) + v_{\rho,\sigma}(Q) - (\rho +\sigma)$. Combining~\eqref{eq1} with~\eqref{eq2}, we obtain that
$$
[P,Q]=\sum_{i=0}^{\alpha}\sum_{j=0}^{\beta}\lambda_i\mu_j c_{ij} X^{\frac{r+u}{l}-\frac{(i+j)\sigma} {\rho}-1} Y^{s+v+i+j-1} + R_{PQ},
$$
with $v_{\rho,\sigma}(R_{PQ}) < v_{\rho,\sigma}(P) + v_{\rho,\sigma}(Q) - (\rho +\sigma)$. Using now that
$$
v_{\rho,\sigma}\left(X^{\frac{r+u}{l}-\frac{(i+j)\sigma} {\rho}-1} Y^{s+v+i+j-1}\right) = v_{\rho,\sigma}(P) + v_{\rho,\sigma}(Q) - (\rho +\sigma),
$$
the result follows immediately.
\end{proof}

\begin{corollary}\label{ell depende del ell} Let $(\rho,\sigma)\in\mathfrak{V}^0$ and $P,Q,P_1,Q_1\in A_1^{(l)}\setminus\{0\}$. If
$$
\ell_{\rho,\sigma}(P) =\ell_{\rho,\sigma}(P_1)\qquad\text{and}\qquad\ell_{\rho,\sigma}(Q) =\ell_{\rho,\sigma}(Q_1),
$$
then $[P,Q]_{\rho,\sigma}=[P_1,Q_1]_{\rho,\sigma}$.
\end{corollary}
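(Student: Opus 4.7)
The corollary is essentially a direct consequence of Proposition~\ref{calculo del corchete}, so my plan is simply to read off the explicit formula for the bracket and note that the right-hand side of that formula is built entirely from the data of $\ell_{\rho,\sigma}(P)$ and $\ell_{\rho,\sigma}(Q)$ (the coefficients $\lambda_i,\mu_j$, the base exponents $r/l,s,u/l,v$, and the lengths $\alpha,\beta$). So the first step is to write
$$
\ell_{\rho,\sigma}(P)=\sum_{i=0}^{\alpha}\lambda_i x^{\frac{r}{l}-\frac{i\sigma}{\rho}}y^{s+i}\quad\text{and}\quad \ell_{\rho,\sigma}(Q)=\sum_{j=0}^{\beta}\mu_j x^{\frac{u}{l}-\frac{j\sigma}{\rho}}y^{v+j},
$$
with $\lambda_0,\lambda_\alpha,\mu_0,\mu_\beta\ne 0$, and apply Proposition~\ref{calculo del corchete} both to the pair $(P,Q)$ and the pair $(P_1,Q_1)$; since the hypotheses force the two expansions of $\ell_{\rho,\sigma}(P)$, $\ell_{\rho,\sigma}(P_1)$ (resp.\ $\ell_{\rho,\sigma}(Q)$, $\ell_{\rho,\sigma}(Q_1)$) to coincide term by term, the same formula gives the same output in both cases.

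The only subtlety is that Definition~\ref{def rho-sigma corchete} has two branches: it returns $0$ when $P,Q$ are $(\rho,\sigma)$-proportional, and $\ell_{\rho,\sigma}([P,Q])$ otherwise. So I need to check that Proposition~\ref{calculo del corchete} produces the right value in both branches. In the non-proportional case this is precisely what the proposition asserts. In the proportional case, by Definition~\ref{def rho-sigma proporcionales} one has $v_{\rho,\sigma}([P,Q])<v_{\rho,\sigma}(P)+v_{\rho,\sigma}(Q)-(\rho+\sigma)$, and the proof of Proposition~\ref{calculo del corchete} shows that the contributions at level $v_{\rho,\sigma}(P)+v_{\rho,\sigma}(Q)-(\rho+\sigma)$ are exactly $\sum_{i,j}\lambda_i\mu_j c_{ij}x^{\frac{r+u}{l}-\frac{(i+j)\sigma}{\rho}-1}y^{s+v+i+j-1}$; the proportionality hypothesis forces this sum to vanish, which agrees with $[P,Q]_{\rho,\sigma}=0$. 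Thus in either branch the value of $[P,Q]_{\rho,\sigma}$ is the formal sum in the statement of Proposition~\ref{calculo del corchete}.

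The conclusion is now immediate: this formal sum depends only on the datum $(\ell_{\rho,\sigma}(P),\ell_{\rho,\sigma}(Q))$, so substituting $(P_1,Q_1)$ for $(P,Q)$ produces exactly the same polynomial, giving $[P,Q]_{\rho,\sigma}=[P_1,Q_1]_{\rho,\sigma}$. There is no genuine obstacle here; the only thing to be careful about is to spell out why the formula in Proposition~\ref{calculo del corchete} agrees with the case-splitting Definition~\ref{def rho-sigma corchete}, as indicated above.
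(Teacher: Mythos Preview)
Your proof is correct and takes exactly the same approach as the paper, which simply writes ``By Proposition~\ref{calculo del corchete}.'' Your additional discussion of the two branches of Definition~\ref{def rho-sigma corchete} is sound but unnecessary, since the statement of Proposition~\ref{calculo del corchete} already gives the formula for $[P,Q]_{\rho,\sigma}$ unconditionally (covering both the proportional and non-proportional cases).
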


\begin{proof} By Proposition~\ref{calculo del corchete}.
\end{proof}

\begin{corollary}\label{extremos alineados} Let $(\rho,\sigma)\in\mathfrak{V}^0$ and $P,Q\in A_1^{(l)}\setminus\{0\}$. If $[P,Q]_{\rho,\sigma}= 0$, then
$$
\st_{\rho,\sigma}(P)\sim\st_{\rho,\sigma}(Q)\quad\text{and}\quad\en_{\rho,\sigma}(P) \sim\en_{\rho,\sigma}(Q).
$$
\end{corollary}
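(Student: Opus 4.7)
The plan is to read the conclusion directly off the explicit formula for $[P,Q]_{\rho,\sigma}$ provided by Proposition~\ref{calculo del corchete}. First I would write $\ell_{\rho,\sigma}(P)=\sum_{i=0}^{\alpha}\lambda_i x^{r/l-i\sigma/\rho} y^{s+i}$ and $\ell_{\rho,\sigma}(Q)=\sum_{j=0}^{\beta}\mu_j x^{u/l-j\sigma/\rho} y^{v+j}$ with $\lambda_0,\lambda_{\alpha},\mu_0,\mu_{\beta}\ne 0$, exactly as in that proposition. Since $(\rho,\sigma)\in\mathfrak{V}^0$ forces $\rho>0$ and $\rho+\sigma>0$, a short computation with $v_{1,-1}$ and $v_{-1,1}$ shows that the extremal monomials of $\ell_{\rho,\sigma}(P)$ correspond to $i=0$ and $i=\alpha$; hence $\st_{\rho,\sigma}(P)=(r/l,s)$ and $\en_{\rho,\sigma}(P)=(r/l-\alpha\sigma/\rho,\,s+\alpha)$, with the analogous formulas for $Q$.

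The key observation is that in the sum
$$
[P,Q]_{\rho,\sigma}=\sum_{i,j}\lambda_i\mu_j c_{ij}\, x^{(r+u)/l-(i+j)\sigma/\rho-1} y^{s+v+i+j-1},
$$
both exponents depend on $(i,j)$ only through $k:=i+j$, so monomials get grouped by $k$. Within the summation range $0\le i\le\alpha$, $0\le j\le\beta$, the value $k=0$ is attained only by $(i,j)=(0,0)$ and $k=\alpha+\beta$ only by $(i,j)=(\alpha,\beta)$. Consequently, the two extreme monomials appear with isolated coefficients $\lambda_0\mu_0 c_{00}$ and $\lambda_{\alpha}\mu_{\beta} c_{\alpha\beta}$. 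The hypothesis $[P,Q]_{\rho,\sigma}=0$ therefore forces $c_{00}=0$ and $c_{\alpha\beta}=0$.

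Finally, reading off $c_{00}=(u/l,v)\times(r/l,s)$ and $c_{\alpha\beta}=(u/l-\beta\sigma/\rho,\,v+\beta)\times(r/l-\alpha\sigma/\rho,\,s+\alpha)$ from Proposition~\ref{calculo del corchete}, the identifications above translate these vanishings into $\st_{\rho,\sigma}(Q)\times\st_{\rho,\sigma}(P)=0$ and $\en_{\rho,\sigma}(Q)\times\en_{\rho,\sigma}(P)=0$, which by Definition~\ref{def alineados} are precisely the alignments $\st_{\rho,\sigma}(P)\sim\st_{\rho,\sigma}(Q)$ and $\en_{\rho,\sigma}(P)\sim\en_{\rho,\sigma}(Q)$. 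There is no substantive obstacle; the only thing to check carefully is the combinatorial uniqueness of the index pairs $(0,0)$ and $(\alpha,\beta)$ realizing the extreme values of $k$, which is immediate from the box constraints on $(i,j)$.
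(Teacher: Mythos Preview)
Your proposal is correct and follows essentially the same approach as the paper: both derive the result directly from Proposition~\ref{calculo del corchete} by identifying $c_{00}$ and $c_{\alpha\beta}$ with the cross products $\st_{\rho,\sigma}(Q)\times\st_{\rho,\sigma}(P)$ and $\en_{\rho,\sigma}(Q)\times\en_{\rho,\sigma}(P)$. You simply spell out in more detail why the vanishing of $[P,Q]_{\rho,\sigma}$ forces these two particular coefficients to vanish (via the uniqueness of the index pairs realizing $k=0$ and $k=\alpha+\beta$), whereas the paper takes this as immediate.
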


\begin{proof} This follows immediately from Proposition~\ref{calculo del corchete}, since
$$
\st_{\rho,\sigma}(P)\times\st_{\rho,\sigma}(Q)=c_{00}\quad\text{and}\quad \en_{\rho,\sigma}(P)\times\en_{\rho,\sigma}(Q)=c_{\alpha\beta},
$$
where we are using the same notations as in the statement of that result.
\end{proof}

\begin{definition}\label{polinomio asociado f^{(l)}} Let $P\in L^{(l)}\setminus\{0\}$ and $(\rho,\sigma)\in \mathfrak{V}^0$. If
$$
\ell_{\rho,\sigma}(P) =\sum_{i=0}^{\gamma} a_i x^{\frac{r}{l} -\frac{i\sigma}{\rho}} y^{s+i}\qquad\text{with $a_0,a_{\gamma}\ne 0$},
$$
we set
$$
f^{(l)}_{P,\rho,\sigma} :=\sum_{i=0}^{\gamma} a_i x^i\in K[x].
$$
Furthermore, for $P\in A_1^{(l)}$ we set $f^{(l)}_{P,\rho,\sigma} := f^{(l)}_{\Psi^{(l)}(P),\rho,\sigma}$. Note that $f^{(l)}_{P,\rho,\sigma} = f^{(l)}_{\ell_{\rho,\sigma}(P),\rho,\sigma}$
\end{definition}

\begin{remark}\label{notita} Note that
\begin{equation}
\st_{\rho,\sigma}(P) =\Bigl(\frac{r}{l},s\Bigr),\quad\en_{\rho,\sigma}(P) =\Bigl(\frac{r}{l}-\frac{\gamma\sigma}{\rho},s +\gamma\Bigr)\quad\text{and}\quad\ell_{\rho,\sigma}(P)= x^{\frac{r}{l}}y^s f^{(l)}_{P,\rho,\sigma}(x^{-\frac{\sigma} {\rho}}y).\label{eq57}
\end{equation}
\end{remark}

\begin{remark}\label{f de un producto} Let $(\rho,\sigma)\in\mathfrak{V}^0$. From Proposition~\ref{pr v de un producto} it follows immediately that
\begin{equation*}
f^{(l)}_{PQ,\rho,\sigma}=f^{(l)}_{P,\rho,\sigma}f^{(l)}_{Q,\rho,\sigma}\quad\text{for $P,Q\in A_1^{(l)}\setminus\{0\}$.}
\end{equation*}
The same result holds for $P,Q\in L^{(l)}\setminus\{0\}$.
\end{remark}

\smallskip

Statement~(2) of the following theorem justifies the terminology ``$(\rho,\sigma)$-proportional'' introduced in Definition~\ref{def rho-sigma proporcionales}.

\begin{theorem} (Compare with~\cite{D}*{Lemma 2.7.ii$_3$})\label{f[] en A_1^{(l)}} Let $P,Q\in A_1^{(l)}\setminus\{0\}$ and let $(\rho,\sigma)\in \mathfrak{V}^0$. Set $a:=\frac{1} {\rho} v_{\rho,\sigma}(Q)$ and $b:=\frac{1}{\rho} v_{\rho,\sigma}(P)$.

\begin{enumerate}

\smallskip

\item If $[P,Q]_{\rho,\sigma}\ne 0$, then there exist $h\in\mathds{N}_0$ and $c\in\mathds{Z}$, such that
$$
x^h f_{[P,Q]} = cf_P f_Q+ ax f'_P f_Q-bxf'_Qf_P,
$$
where $f_P:=f^{(l)}_{P,\rho,\sigma}$, $f_Q:=f^{(l)}_{Q,\rho,\sigma}$ and $f_{[P,Q]}:=f^{(l)}_{[P,Q],\rho,\sigma}$.

\smallskip

\item If $[P,Q]_{\rho,\sigma}= 0$ and $a,b>0$, then there exist $\lambda_P,\lambda_Q\in K^{\times}$, $m,n\in\mathds{N}$ and a $(\rho,\sigma)$-ho\-mo\-ge\-neous polynomial $R\!\in\! L^{(l)}$, with $\gcd(m,n)\! =\! 1$ and $m/n\! =\! b/a$, such that
$$
\ell_{\rho,\sigma}(P) =\lambda_P R^m\quad\text{and}\quad\ell_{\rho,\sigma}(Q) =\lambda_Q R^n.
$$

\end{enumerate}

\end{theorem}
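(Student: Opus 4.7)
Part~(1) is a direct computation from Proposition~\ref{calculo del corchete}. With the notation of that proposition, expanding the determinant $c_{ij} = (u/l - j\sigma/\rho, v+j) \times (r/l - i\sigma/\rho, s+i)$ and cancelling the $ij\sigma/\rho$ terms gives $c_{ij} = c + ai - bj$ with $c := (us - rv)/l$. Using $\sum_i \lambda_i x^i = f_P$, $\sum_i i\lambda_i x^i = xf_P'$ and the analogous identities for $f_Q$, the generating polynomial $\sum_{i,j}\lambda_i\mu_j c_{ij} x^{i+j}$ is recognized as $cf_Pf_Q + axf_P'f_Q - bxf_Q'f_P$, and $f_{[P,Q]}$ is obtained from it by stripping its smallest nonvanishing power $x^{k_0}$ in the sense of Definition~\ref{polinomio asociado f^{(l)}}, yielding the identity with $h := k_0 \in \mathds{N}_0$.

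For part~(2), setting $[P,Q]_{\rho,\sigma} = 0$ in the identity of part~(1) gives the algebraic equation $cf_Pf_Q + axf_P'f_Q - bxf_Q'f_P = 0$, equivalently $bxf_Q'/f_Q - axf_P'/f_P = c$. I would then factor $f_P = \lambda_\alpha \prod_\zeta(x - \zeta)^{m_\zeta}$ and $f_Q = \mu_\beta \prod_\zeta(x - \zeta)^{n_\zeta}$ over $\overline{K}$, noting that $\zeta \ne 0$ for every root because $f_P(0) = \lambda_0 \ne 0$ and $f_Q(0) = \mu_0 \ne 0$ by Definition~\ref{polinomio asociado f^{(l)}}. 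A partial fraction analysis of $xf'/f$ then shows that the residue at each root $\zeta$ of the left hand side is $\zeta(bn_\zeta - am_\zeta)$, while the value at infinity equals $b\beta - a\alpha$. Equating the left hand side to the constant $c$ forces $bn_\zeta = am_\zeta$ at every root and $c = b\beta - a\alpha$. Writing $b/a = m/n$ in lowest terms (possible because $a,b > 0$) and using $\gcd(m,n) = 1$, I conclude $m_\zeta = k_\zeta m$ and $n_\zeta = k_\zeta n$ for nonnegative integers $k_\zeta$; summing yields $\alpha = m\sum_\zeta k_\zeta$, $\beta = n\sum_\zeta k_\zeta$, and in particular $c = 0$. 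Setting $g := \prod_\zeta(x-\zeta)^{k_\zeta}$ one gets $f_P = \lambda_\alpha g^m$ and $f_Q = \mu_\beta g^n$, with $g \in K[x]$ by a Galois-invariance argument (equivalently, by comparing the irreducible factorizations of $f_P, f_Q$ in $K[x]$, whose multiplicities are in the fixed ratio $m:n$).

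It remains to lift this back to $\ell_{\rho,\sigma}(P)$ and $\ell_{\rho,\sigma}(Q)$. By Remark~\ref{notita},
$$\ell_{\rho,\sigma}(P) = x^{r/l} y^s f_P(x^{-\sigma/\rho} y) = \lambda_\alpha\, x^{r/l} y^s g(x^{-\sigma/\rho} y)^m,$$
so the natural candidate is $R := x^{r/(lm)} y^{s/m} g(x^{-\sigma/\rho} y)$. The step I expect to be most delicate is verifying that this $R$ actually lies in $L^{(l)}$, i.e.\ that $m \mid r$ and $m \mid s$; but this follows for free from the identity $c = 0$ proved above, because $c = (us - rv)/l = 0$ combined with $b/a = m/n$ gives $us = rv$, hence $mu = nr$ and $mv = ns$, so $\gcd(m,n) = 1$ forces $m \mid r$ and $m \mid s$ (and analogously $n \mid u$, $n \mid v$). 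The resulting identities $\ell_{\rho,\sigma}(P) = \lambda_P R^m$ with $\lambda_P := \lambda_\alpha$, and $\ell_{\rho,\sigma}(Q) = \lambda_Q R^n$ with $\lambda_Q := \mu_\beta$, complete the proof.
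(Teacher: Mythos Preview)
Your argument for part~(1) and your partial-fraction approach to part~(2) are both correct and yield the same intermediate conclusion as the paper: $c=0$, and there is $g\in K[x]$ with $f_P=\lambda_P g^m$, $f_Q=\lambda_Q g^n$. (The paper reaches this via the ODE $(f_P^{\bar a}/f_Q^{\bar b})'=0$ instead of residues; your route is a perfectly good alternative.)

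There is, however, a genuine gap in your verification that $R\in L^{(l)}$. You identify the condition ``$R\in L^{(l)}$'' with ``$m\mid r$ and $m\mid s$'', but that only guarantees that the \emph{leading} monomial $x^{r/(lm)}y^{s/m}$ lies in $L^{(l)}$. The full element is
\[
R=\sum_{j}\nu_j\,x^{\frac{r}{lm}-\frac{j\sigma}{\rho}}y^{\frac{s}{m}+j},
\]
so for every $j$ in the support of $g$ you also need $j\sigma/\rho\in\frac{1}{l}\mathds{Z}$, equivalently $e\mid j$ where $e:=\rho/\gcd(\rho,l)$. This does \emph{not} follow from $m\mid r$ and $m\mid s$: nothing in your argument so far constrains which exponents appear in $g$. (Concretely, for $l=1$ and $\rho>1$ you must show $g\in K[x^{\rho}]$, which is not automatic from $g\in K[x]$.)

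The paper closes this gap with a short Bezout/UFD argument that you can adopt verbatim: since $R^m=\lambda_P^{-1}\ell_{\rho,\sigma}(P)\in L^{(l)}$ and $R^n=\lambda_Q^{-1}\ell_{\rho,\sigma}(Q)\in L^{(l)}$ with $\gcd(m,n)=1$, writing $1=\alpha m+\beta n$ gives $R=(R^m)^{\alpha}(R^n)^{\beta}$ in the fraction field of $L^{(l)}$; then $R^m\in L^{(l)}$ together with the fact that $L^{(l)}$ is a UFD forces $R\in L^{(l)}$. Equivalently, in your language: $g^m,g^n\in K[x^{e}]$ and $\gcd(m,n)=1$ force $g\in K[x^{e}]$, by the same argument applied to $K[x^{e}]\subseteq K[x]$.
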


\begin{proof} Write
$$
\qquad\ell_{\rho,\sigma}(P) =\sum_{i=0}^{\alpha}\lambda_i x^{\frac{r}{l}-\frac{i\sigma} {\rho}} y^{s+i}\quad\text{and}\quad\ell_{\rho,\sigma}(Q) =\sum_{j=0}^{\beta}\mu_j x^{\frac{u}{l}-\frac{j\sigma}{\rho}} y^{v+j},
$$
with $\lambda_0,\lambda_\alpha,\mu_0,\mu_\beta\ne 0$. By Proposition~\ref{calculo del corchete},
$$
[P,Q]_{\rho,\sigma} =\sum\lambda_i\mu_j c_{ij} x^{\frac{r+u}{l}-\frac{(i+j)\sigma} {\rho}-1} y^{s+v+i+j-1},
$$
where $c_{ij} :=\bigl(\frac{u}{l}-\frac{j\sigma}{\rho},v+j\bigr)\times\bigl(\frac{r}{l}- \frac{i\sigma}{\rho},s+i\bigr)$. Set
$$
F(x):=\sum_{i,j}\lambda_i\mu_j c_{ij} x^{i+j}.
$$
Note that if $[P,Q]_{\rho,\sigma} = 0$, then $F=0$, and if $[P,Q]_{\rho,\sigma}\ne 0$, then $F = x^h f_{[P,Q]}$, where $h$ is the multiplicity of $x$ in $F$. Also note that
$$
a=\left(u/l,v\right)\times\left(-\sigma/\rho,1\right)\quad\text{and}\quad b= -\left(-\sigma/\rho,1\right)\times \left(r/l,s\right).
$$
Let $c:=\left(u/l,v\right)\times\left(r/l,s\right)$. Clearly $c_{ij} = c+ia-jb$. Since
$$
\sum_{i,j}\lambda_i\mu_j x^{i+j} = f_Pf_Q ,\quad\sum_{i,j} i\lambda_i\mu_j x^{i+j}=xf_P'f_Q\quad\text{and} \quad\sum_{i,j} j\lambda_i\mu_j x^{i+j} = xf_Q'f_P,
$$
we have
\begin{align}
F = cf_Pf_Q+axf_P'f_Q-bxf_Q'f_P.\label{eq8}
\end{align}
Statement~(1) follows immediately from this fact. Assume now that $[P,Q]_{\rho,\sigma}= 0$ and that $a,b>0$. In this case $F = 0$ and, in particular, $c = c_{00} =\frac{F(0)}{\lambda_0\mu_0} = 0$. Hence,~\eqref{eq8} becomes
\begin{equation}
af_P'f_Q-bf_Q'f_P = 0.\label{eq9}
\end{equation}
Let $\bar{l}\in\mathds{N}$ be such that $\bar a:=\bar{l}a$ and $\bar b:=\bar{l}b$ are natural numbers. Since~\eqref{eq9} implies $(f_P^{\bar a}/f_Q^{\bar b})'=0$, there exists $\lambda\in K^\times$, such that $f_P^{\bar a} =\lambda f_Q^{\bar b}$. Hence, there are $g\in K[x]$ and $\lambda_P,\lambda_Q\in K^\times$, sa\-tis\-fying
\begin{equation}
f_P =\lambda_P g^m\quad\text{and}\quad f_Q=\lambda_Q g^n,\label{eq10}
\end{equation}
where $m: =\bar b/\gcd(\bar a,\bar b)$ and $n :=\bar a/\gcd(\bar a,\bar b)$. By Remark~\ref{notita} and Corollary~\ref{extremos alineados}
$$
\Bigl(\frac r l, s\Bigr) = \st_{\rho,\sigma}(P)\sim  \st_{\rho,\sigma}(Q) = \Bigl(\frac u l, v\Bigr)
$$
So, since $a,b>0$, there exists $\lambda\in \mathds{Q}$ such that
$\bigl(r/l, s\bigr) = \lambda \bigl(u/l, v\bigr)$. Applying $v_{\rho,\sigma}$ to this equality we obtain that $\rho b = \lambda \rho a$, from which $\lambda = b/a = m/n$. Thus $m(u,l v) = n(r,l s)$, and so there exists $(p,\bar q)\in\mathds{Z} \times \mathds{N}_0$, such that
\begin{equation*}
(u,l v) = n(p,\bar q)\quad\text{and}\quad (r,l s) = m(p,\bar q).
\end{equation*}
In particular $l|n\bar q$ and $l|m\bar q$, and so $l|\bar q$, since $m$ and $n$ are coprime. Hence,
\begin{equation}
\left(u/l,v\right) = n\left(p/l,q\right)\quad\text{and}\quad\left(r/l,s\right) = m\left(p/l,q\right),\label{eq11}
\end{equation}
where $q:=\bar q/l$. If $g =\sum_{i=0}^{\gamma}\nu_i x^i$ with $\nu_{\gamma}\ne 0$, then, by~\eqref{eq10} and~\eqref{eq11},
$$
R:=\sum_{i=0}^{\gamma}\nu_i x^{\frac{p}{l}-i\frac{\sigma}{\rho}} y^{q+i}
$$
satisfies
$$
\ell_{\rho,\sigma}(P)= x^{\frac{r}{l}}y^s f_P(x^{-\frac{\sigma}{\rho}}y) =\lambda_P\bigl(x^{\frac{p}{l}}y^q g(x^{-\frac{\sigma}{\rho}}y)\bigr)^m =\lambda_P R^m
$$
and
$$
\ell_{\rho,\sigma}(Q)= x^{\frac{u}{l}}y^v f_Q(x^{-\frac{\sigma}{\rho}}y) =\lambda_Q\bigl(x^{\frac{p}{l}}y^q g(x^{-\frac{\sigma}{\rho}}y)\bigr)^n =\lambda_Q R^n.
$$
In order to finish the proof of statement~(2) it suffices to check that $R\in L^{(l)}$. First note that $R$ belongs to the field of fractions of $L^{(l)}$, because $R^m,R^n\in L^{(l)}$ and $\gcd(m,n) = 1$. But then $R\in L^{(l)}$, since $R^m\in L^{(l)}$.
\end{proof}

\section{Order on directions}

\setcounter{equation}{0}
In this section we establish an order relation on the directions and we associate with each $P\in A_1^{(l)}$ a finite ordered set of directions $\Dir(P)$. The main result is Proposition~\ref{le basico}, which states that if $(\rho_1,\sigma_1)< (\rho_2,\sigma_2)$ are two consecutive directions in $\Dir(P)$, then $\st_{\rho_2,\sigma_2}(P)$ coincides with $en_{\rho_1,\sigma_1}(P)$ and with $\Supp(\ell_{\rho,\sigma}(P))$ for each intermediate direction $(\rho,\sigma)$. Another useful result is Proposition~\ref{le basico1}, which compares $v_{\rho',\sigma'}\bigl(\st_{\rho,\sigma}(P)\bigr)$ with $v_{\rho',\sigma'} \bigl(\en_{\rho,\sigma}(P)\bigr)$ for $(\rho',\sigma')\ne(\rho,\sigma)$.

\smallskip

We define an order relation on $\mathfrak{V}$ by setting $(\rho_1,\sigma_1)\le (\rho,\sigma)$ if $(\rho_1,\sigma_1) \times (\rho,\sigma)\ge 0$. We can extend this order to all of $\ov{\mathfrak{V}}$ by setting
$$
(1,-1)<(\rho,\sigma) <(-1,1)\quad\text{for all $(\rho,\sigma)\in \mathfrak{V}$.}
$$
Note that if $(\rho,\sigma),(\rho_1\sigma_1)\in\ov{\mathfrak{V}}$ and $\{(\rho,\sigma), (\rho_1\sigma_1)\}\ne \{(1,-1),(-1,1)\}$, then
$$
(\rho_1,\sigma_1)<(\rho,\sigma) \Longleftrightarrow (\rho_1,\sigma_1)\times (\rho,\sigma)>0.
$$
\begin{figure}[h]
\centering
\begin{tikzpicture}
\draw[step=.5cm,gray,very thin] (-0.7,-0.7) grid (2.7,2.7);
\draw [->] (0,0) -- (3,0) node[anchor=north]{$X$};
\draw [->] (0,0) --  (0,3) node[anchor=east]{$Y$};
\draw[->] (0,0) -- (0.5,-0.5) node[below=2pt]{$(1,-1)$};
\draw[->] (0,0) -- (1,-0.5);
\draw[->] (0,0) -- (1.5,0.5);
\draw[->] (0,0) -- (0.5,0.5);
\draw[->] (0,0) -- (-0.5,2);
\draw[->] (0,0) -- (-0.5,0.5)node[above left=1pt]{$(-1,1)$};
\draw[->] (15:2cm) arc (15:75:2cm);
\draw[xshift=3cm, yshift=1.5cm]
node[right,text width=3.7cm]{The directions grow counterclockwise.};
\end{tikzpicture}
\caption{Order relation in $\overline{\mathfrak{V}}$.}
\end{figure}

\begin{definition}\label{forma debil} Let $P\in A_1^{(l)}\setminus\{0\}$. We define the set of {\em directions associated with} $P$ as
$$
\Dir(P):=\{(\rho,\sigma)\in\mathfrak{V}:\#\Supp(\ell_{\rho,\sigma}(P))>1\},
$$
and we set $\ov{\Dir}(P):=\Dir(P)\cup\{(1,-1),(-1,1)\}$. We make a similar definition for $P\in L^{(l)}\setminus\{0\}$.
\end{definition}

For each $(r/l,s)\in\frac{1}{l}\mathds{Z}\times\mathds{Z}\setminus\mathds{Z}(1,1)$ there exists a unique $(\rho,\sigma)\in\mathfrak{V}$, denoted by $\dir(r/l,s)$, such that $v_{\rho,\sigma}(r/l,s)=0$. In fact clearly
\begin{equation}
(\rho,\sigma) :=\begin{cases}\left(-ls/d,r/d\right)&\text{ if $r-ls>0$,}\\ \left(ls/d,-r/d\right)&\text{ if $r-ls<0$,}\end{cases}\label{val}
\end{equation}
where $d:=\gcd(r,ls)$, satisfies the required condition, and the uniqueness is evident.
\begin{figure}[h]
\centering
\begin{tikzpicture}
\draw[step=.5cm,gray,very thin] (0,0) grid (3.3,3.3);
\draw [->] (0,0) -- (3.5,0) node[anchor=north]{$X$};
\draw [->] (0,0) --  (0,3.5) node[anchor=east]{$Y$};
\draw (2,0) -- (2.5,0.5) -- (3,1.5) -- (2.5,2.5) -- (1,3) -- (0,3);
\filldraw [gray] (2.5,0.5)  circle (2pt)
                  (3,1.5)   circle (2pt)
                  (2.5,2.5) circle (2pt)
                  (1,3)     circle (2pt)
                  (0,3)     circle (2pt);
\draw[xshift=3.2cm, yshift=2cm]
node[right,text width=8.4cm]{$\overline{\Dir}(P)=\{(1,-1),(2,-1),(2,1),(1,3),(1,0),(-1,1)\}$.};
\end{tikzpicture}
\caption{Definition~\ref{forma debil}.}
\end{figure}

\begin{remark}\label{valuacion depende de extremos} Note that
$$
(\rho,\sigma) = \dir\bigl(\en_{\rho,\sigma}(P) -\st_{\rho,\sigma}(P)\bigr)\qquad \text{for all $P\in A_1^{(l)}\setminus\{0\}$ and $(\rho,\sigma)\in\Dir(P)$}.
$$
\end{remark}

Our next purpose is to prove Proposition~\ref{le basico} below. For $P\in A_1^{(l)}\setminus\{0\}$ and $(\rho,\sigma)\in\mathfrak{V}$, we consider the following two sets of directions:
$$
\Dirsup_P(\rho,\sigma):=\left\{\dir\left(\left(i/l,j\right)-\en\right):\left(i/l,j \right)\in\Supp(P)\text{ and } v_{-1,1}\left(i/l,j\right)>v_{-1,1}(\en)\right\}
$$
and
$$
\Dirinf_P(\rho,\sigma):=\left\{\dir\left(\left(i/l,j\right)-\st\right):\left(i/l,j \right)\in\Supp(P)\text{ and } v_{1,-1}\left(i/l,j\right)> v_{1,-1}(\st)\right\},
$$
where for the sake of brevity we set $\en:=\en_{\rho,\sigma}(P)$ and $\st:=\st_{\rho,\sigma}(P)$.

\begin{lemma}\label{separacion} Let $P\in A_1^{(l)}\setminus\{0\}$ and $(\rho,\sigma)\in\mathfrak{V}$.
\begin{enumerate}

\smallskip

\item If $(\rho_1,\sigma_1)\in\Dirsup_P(\rho,\sigma)$, then $(\rho_1,\sigma_1) > (\rho,\sigma)$.

\smallskip

\item If $(\rho_1,\sigma_1)\in\Dirinf_P(\rho,\sigma)$, then $(\rho_1,\sigma_1) < (\rho,\sigma)$.

\end{enumerate}

\end{lemma}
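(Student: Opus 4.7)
The plan is to reduce both parts to the simple geometric fact that $\en_{\rho,\sigma}(P)$ and $\st_{\rho,\sigma}(P)$ are the unique extremal points of $v_{-1,1}$ and $v_{1,-1}$, respectively, on the finite edge $\Supp(\ell_{\rho,\sigma}(P))$. I will prove part~(1); part~(2) is entirely analogous.

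First I would observe that $\Supp(\ell_{\rho,\sigma}(P))$ lies on a line of direction $(-\sigma,\rho)$ and, since $(\rho,\sigma)\in\mathfrak V$, moving along this direction changes $v_{-1,1}$ by $\sigma+\rho>0$. Hence $v_{-1,1}$ is strictly monotone on that segment and attains its maximum at a unique lattice point, which by Definition~\ref{Comienzo y Fin de un elemento de A_1^{(l)}} is exactly $\en:=\en_{\rho,\sigma}(P)$. In particular, any $(i/l,j)\in\Supp(P)$ with $v_{-1,1}(i/l,j)>v_{-1,1}(\en)$ cannot belong to $\Supp(\ell_{\rho,\sigma}(P))$, so $v_{\rho,\sigma}(i/l,j)<v_{\rho,\sigma}(\en)$.

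Setting $(a,b):=(i/l,j)-\en$, linearity of $v_{\rho,\sigma}$ and $v_{-1,1}$ yields $v_{\rho,\sigma}(a,b)<0$ and $v_{-1,1}(a,b)=b-a>0$. Writing $(a,b)=(r'/l,s')$ with $r',s'\in\mathds Z$, the second inequality reads $r'-ls'<0$, so the relevant branch of formula~\eqref{val} produces $(\rho_1,\sigma_1):=\dir(a,b)=\tfrac{1}{d}(ls',-r')$ with $d:=\gcd(r',ls')>0$. A direct computation then yields
\[
(\rho,\sigma)\times(\rho_1,\sigma_1)=\tfrac{1}{d}\bigl(-\rho r'-\sigma ls'\bigr)=-\tfrac{l}{d}\,v_{\rho,\sigma}(a,b)>0,
\]
which, by the definition of the order on $\mathfrak V$, means $(\rho_1,\sigma_1)>(\rho,\sigma)$.

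For part~(2) the same scheme applies with $\st$, $v_{1,-1}$, and the other branch of~\eqref{val} in place of $\en$, $v_{-1,1}$, and the first branch: since moving in direction $(-\sigma,\rho)$ changes $v_{1,-1}$ by $-(\sigma+\rho)<0$, the function $v_{1,-1}$ is strictly monotone on the leading edge and attains its unique maximum at $\st$. The analogue of $(a,b)$ then satisfies $v_{\rho,\sigma}(a,b)<0$ and $a-b>0$, formula~\eqref{val} now gives $(\rho_1,\sigma_1)=\tfrac{1}{d}(-ls',r')$, and the cross–product computation flips sign to yield $(\rho,\sigma)\times(\rho_1,\sigma_1)=\tfrac{l}{d}\,v_{\rho,\sigma}(a,b)<0$, i.e.\ $(\rho_1,\sigma_1)<(\rho,\sigma)$. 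I expect no real obstacle: the only care needed is in tracking the sign branches of~\eqref{val} and of the cross–product.
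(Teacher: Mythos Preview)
Your proof is correct and follows essentially the same route as the paper's: both argue that $v_{-1,1}(i/l,j)>v_{-1,1}(\en)$ forces $(i/l,j)\notin\Supp(\ell_{\rho,\sigma}(P))$, hence $v_{\rho,\sigma}(a,b)<0$ for $(a,b)=(i/l,j)-\en$, and then compute the cross product $(\rho,\sigma)\times(\rho_1,\sigma_1)$ to conclude. The only cosmetic difference is that the paper writes $\dir(a,b)=\lambda(b,-a)$ with $\lambda>0$ and computes $v_{\rho,\sigma}(a,b)=-\tfrac{1}{\lambda}(\rho,\sigma)\times(\rho_1,\sigma_1)$ directly, while you invoke formula~\eqref{val} and track the factor $l/d$; both arrive at the same inequality.
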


\begin{proof} We only prove statement~(1) and leave the other one to the reader. Clearly, if
$$
(i/l,j)\in\Supp(P)\quad\text{and}\quad v_{\rho,\sigma}(i/l,j) = v_{\rho,\sigma}(P),
$$
then $(i/l,j)\in\Supp\bigl(\ell_{\rho,\sigma}(P)\bigr)$, and so $v_{-1,1}(i/l,j)\le v_{-1,1}(\en)$.  Consequently, if
$$
(i/l,j)\in\Supp(P)\quad\text{and}\quad v_{-1,1}(i/l,j) > v_{-1,1}(\en),
$$
then $v_{\rho,\sigma}(i/l,j) < v_{\rho,\sigma}(P) = v_{\rho,\sigma}(\en)$. This means
\begin{equation}
v_{\rho,\sigma}(a,b)<0,\label{eq ij menor que P}
\end{equation}
where $(a,b) := (i/l,j)-\en$. Note that $v_{-1,1}(i/l,j)> v_{-1,1}(\en)$ now reads
$$
b-a = v_{-1,1}(a,b)>0.
$$
But then
$$
(\rho_1,\sigma_1):=\dir\bigl((i/l,j)-\en\bigr) =\dir(a,b)=\lambda (b,-a),
$$
for some $\lambda>0$. Hence
$$
0 > v_{\rho,\sigma}(a,b)= a\rho+b\sigma= -\frac {1}{\lambda}(\sigma_1\rho-\rho_1\sigma)
= -\frac {1}{\lambda}(\rho,\sigma)\times (\rho_1,\sigma_1).
$$
This yields $(\rho,\sigma)\times(\rho_1,\sigma_1)>0$, and so $(\rho_1,\sigma_1)>(\rho,\sigma)$, as desired.
\end{proof}

\begin{lemma}\label{le recorte de dominio} Let $P$, $(\rho,\sigma)$, $\st$ and $\en$ be as before. We have:
\begin{enumerate}

\smallskip

\item If $(i/l,j)\in\Supp(P)$, $(\rho',\sigma')>(\rho,\sigma)$ and $v_{-1,1}(i/l,j)\le v_{-1,1}(\en)$, then
\begin{equation}
v_{\rho',\sigma'}(i/l,j)\le v_{\rho',\sigma'}(\en).\label{eqnu1}
\end{equation}
Moreover, if $(\rho',\sigma')\ne (-1,1)$, then equality holds if and only if $\left(i/l,j\right)=\en$.

\smallskip

\item If $(i/l,j)\in\Supp(P)$, $(\rho',\sigma')<(\rho,\sigma)$ and $v_{1,-1}(i/l,j)\le v_{1,-1}(\st)$, then
$$
v_{\rho',\sigma'}(i/l,j)\le v_{\rho',\sigma'}(\st).
$$
Moreover, if $(\rho',\sigma')\ne (1,-1)$, then equality holds if and only if $\left(i/l,j\right)=\st$.
\end{enumerate}
\end{lemma}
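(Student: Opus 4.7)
The plan is to translate each hypothesis into a sign condition on the displacement vector $(a,b):=(i/l,j)-\en$ and then write $(\rho',\sigma')$ as an appropriate non-negative combination of $(\rho,\sigma)$ and $(-1,1)$. For statement~(1), since $\en$ attains the maximum of $v_{\rho,\sigma}$ on $\Supp(P)$ we get $\rho a+\sigma b \le 0$, while the assumption $v_{-1,1}(i/l,j)\le v_{-1,1}(\en)$ reads $-a+b \le 0$.

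The key step is the convex decomposition. Because $(\rho,\sigma)\in\mathfrak{V}$ gives $\rho+\sigma>0$, the vectors $(\rho,\sigma)$ and $(-1,1)$ are linearly independent, so Cramer's rule yields
\[
(\rho',\sigma')=\alpha(\rho,\sigma)+\beta(-1,1),\qquad \alpha=\frac{\rho'+\sigma'}{\rho+\sigma},\qquad \beta=\frac{\rho\sigma'-\sigma\rho'}{\rho+\sigma}.
\]
From $(\rho',\sigma')\in\ov{\mathfrak{V}}$ we obtain $\rho'+\sigma'\ge 0$, hence $\alpha\ge 0$; and the assumption $(\rho,\sigma)<(\rho',\sigma')$ is exactly $(\rho,\sigma)\times(\rho',\sigma')>0$, hence $\beta>0$. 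Substituting,
\[
v_{\rho',\sigma'}(a,b)=\alpha(\rho a+\sigma b)+\beta(-a+b)\le 0,
\]
which is inequality~\eqref{eqnu1}.

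For the equality clause of~(1), note that $(\rho',\sigma')\ne(-1,1)$ forces $\alpha>0$ (otherwise $(\rho',\sigma')=\beta(-1,1)$ together with $\gcd(\rho',\sigma')=1$ would give $\beta=1$ and hence $(\rho',\sigma')=(-1,1)$). Since both summands above are non-positive and $\alpha,\beta>0$, equality requires $\rho a+\sigma b=0$ and $-a+b=0$. Substituting $b=a$ into the first gives $(\rho+\sigma)a=0$, and $\rho+\sigma>0$ yields $(a,b)=(0,0)$, i.e., $(i/l,j)=\en$.

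Statement~(2) is completely symmetric: set $(a,b):=(i/l,j)-\st$ and decompose $(\rho',\sigma')=\alpha(\rho,\sigma)+\beta(1,-1)$ with $\alpha\ge 0$, $\beta>0$, the justification being the same after replacing $(-1,1)$ by $(1,-1)$ and using $(1,-1)\le(\rho',\sigma')<(\rho,\sigma)$; one then computes $v_{\rho',\sigma'}(a,b)=\alpha(\rho a+\sigma b)+\beta(a-b)\le 0$ and handles equality identically. The only genuine obstacle is the sign bookkeeping in the convex decomposition; modulo that, the argument is purely linear.
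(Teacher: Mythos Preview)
Your proof is correct and is essentially the same argument as the paper's, just packaged more transparently: where the paper multiplies the inequalities $b-a\le 0$ and $\rho a+\sigma b\le 0$ by $\rho\sigma'-\sigma\rho'$ and $\rho'+\sigma'$ respectively and adds to obtain $(\rho+\sigma)(\rho'a+\sigma'b)\le 0$, you divide through by $\rho+\sigma$ first and recognise the resulting coefficients as the coordinates of $(\rho',\sigma')$ in the basis $\{(\rho,\sigma),(-1,1)\}$. The equality analysis is likewise identical in content.
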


\begin{proof} We prove statement~(1) and leave the proof of statement~(2), which is similar, to the reader. Set $(a,b):=\left(i/l,j\right)-\en$. Since, by  hypothesis, $\rho\sigma'-\sigma\rho' >0$ and $b-a\le 0$, it is true that
\begin{equation}
b\rho\sigma'+\sigma\rho_1 a-a\rho\sigma'-b\sigma\rho'\le 0,\label{eq condiciones del lemma}
\end{equation}
and the equality holds if and only if $b\!=\!a$. We also know that $v_{\rho,\sigma} (i/l,j)\le v_{\rho,\sigma}(\en)$, which means that $\rho a+\sigma b\le 0$. Since $\rho'+\sigma'\ge 0$, we obtain
\begin{equation}
\rho'\rho a+\sigma'\sigma b+\rho'\sigma b+\sigma'\rho a =(\rho a+\sigma b)(\rho'+\sigma')\le 0.\label{eq28}
\end{equation}
Summing up~\eqref{eq condiciones del lemma} and~\eqref{eq28}, we obtain
$$
0\ge\rho\rho' a+\sigma\sigma' b+\rho\sigma' b+\sigma\rho' a=(\rho+\sigma)(\rho' a+\sigma' b),
$$
and so $v_{\rho',\sigma'}(a,b)\le 0$, as desired. Moreover, if the equality is true, then~\eqref{eq condiciones del lemma} is also an equality, and so $b=a$. Hence $0 = v_{\rho',\sigma'}(a,a) = (\rho'+\sigma') a$, which implies that $a=0$ or $(\rho',\sigma') = (-1,1)$. Thus, is $(\rho',\sigma')\ne (-1,1)$ and equality holds in~\eqref{eqnu1}, then $(i/l,j) =\en$.
\end{proof}

\begin{notation} Let $P$ and $(\rho,\sigma)$ be as before.

\begin{itemize}

\smallskip

\item[-] If $\Dirsup_P(\rho,\sigma)\ne\emptyset$, then we set $\Succ_P(\rho,\sigma):=\min\Dirsup_P(\rho, \sigma)$.

\smallskip

\item[-] if $\Dirinf_P(\rho,\sigma)\ne\emptyset$, then we set $\Pred_P(\rho,\sigma):=\max\Dirinf_P(\rho, \sigma)$.

\end{itemize}

\end{notation}

\begin{lemma}\label{predysucc} Let $P$, $(\rho,\sigma)$, $\st$ and $\en$ be as before. We have:

\begin{enumerate}

\smallskip

\item $\Succ_P(\rho,\sigma)\in\Dir(P)$ and $\en=\st_{\Succ_P(\rho,\sigma)}(P)$.

\smallskip

\item $\Pred_P(\rho,\sigma)\in\Dir(P)$ and $\st=\en_{\Pred_P(\rho,\sigma)}(P)$.
\end{enumerate}
\end{lemma}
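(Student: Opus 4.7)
\medskip

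\noindent\textbf{Proof plan.} Both statements are proved the same way, so I will focus on (1); let $(\rho',\sigma'):=\Succ_P(\rho,\sigma)$. By Lemma~\ref{separacion}(1), $(\rho',\sigma')>(\rho,\sigma)$, so in particular $(\rho',\sigma')\in\mathfrak{V}$ and $(\rho',\sigma')\neq(-1,1)$. The plan is to prove the two statements simultaneously by showing
$$
v_{\rho',\sigma'}(i/l,j)\le v_{\rho',\sigma'}(\en)\qquad\text{for every }(i/l,j)\in\Supp(P),
$$
to characterize when equality holds, and to read off both $(\rho',\sigma')\in\Dir(P)$ and $\en=\st_{\rho',\sigma'}(P)$ from that characterization.

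I split $\Supp(P)$ into two pieces according to whether $v_{-1,1}(i/l,j)\le v_{-1,1}(\en)$ or $v_{-1,1}(i/l,j)>v_{-1,1}(\en)$. For the first piece, Lemma~\ref{le recorte de dominio}(1) applies directly (since $(\rho',\sigma')>(\rho,\sigma)$ and $(\rho',\sigma')\neq(-1,1)$) and gives the desired inequality, with equality if and only if $(i/l,j)=\en$. For the second piece, write $(a,b):=(i/l,j)-\en$; then $b-a>0$ and $\dir(a,b)=\lambda(b,-a)$ for some $\lambda>0$, as in the proof of Lemma~\ref{separacion}(1). A direct computation gives
$$
(\rho',\sigma')\times\dir(a,b)=\lambda(\rho'(-a)-\sigma'b)=-\lambda\,v_{\rho',\sigma'}(a,b),
$$
so $v_{\rho',\sigma'}(i/l,j)\le v_{\rho',\sigma'}(\en)$ is equivalent to $(\rho',\sigma')\le\dir((i/l,j)-\en)$, with equality iff the two directions coincide. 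Since $\dir((i/l,j)-\en)\in\Dirsup_P(\rho,\sigma)$ and $(\rho',\sigma')$ is the minimum of this set, the inequality $(\rho',\sigma')\le\dir((i/l,j)-\en)$ holds, which finishes the first piece.

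Putting the two pieces together, $v_{\rho',\sigma'}(P)=v_{\rho',\sigma'}(\en)$, and the support of $\ell_{\rho',\sigma'}(P)$ consists of $\en$ together with those $(i/l,j)\in\Supp(P)$ satisfying $v_{-1,1}(i/l,j)>v_{-1,1}(\en)$ and $\dir((i/l,j)-\en)=(\rho',\sigma')$. Since $(\rho',\sigma')$ lies in the nonempty set $\Dirsup_P(\rho,\sigma)$, at least one such point exists, so $\#\Supp(\ell_{\rho',\sigma'}(P))\ge 2$ and therefore $(\rho',\sigma')\in\Dir(P)$.

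It remains to identify $\en$ with $\st_{\rho',\sigma'}(P)=w(\ell_{\rho',\sigma'}(P))$. Every point of $\Supp(\ell_{\rho',\sigma'}(P))$ other than $\en$ satisfies $v_{-1,1}(i/l,j)>v_{-1,1}(\en)$, equivalently $v_{1,-1}(i/l,j)<v_{1,-1}(\en)$. Hence $\en$ is the unique point of $\Supp(\ell_{\rho',\sigma'}(P))$ realizing the maximum of $v_{1,-1}$, so $\ell_{1,-1}(\ell_{\rho',\sigma'}(P))$ is a monomial in $X,Y$ with exponent vector $\en$, giving $w(\ell_{\rho',\sigma'}(P))=\en$ as required. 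The proof of (2) is the mirror image: apply Lemma~\ref{le recorte de dominio}(2) and use maximality of $\Pred_P(\rho,\sigma)$ in $\Dirinf_P(\rho,\sigma)$ together with the analogous sign computation to conclude that $\st$ is the unique point of $\Supp(\ell_{\Pred_P(\rho,\sigma)}(P))$ realizing the maximum of $v_{-1,1}$, so $\ov{w}(\ell_{\Pred_P(\rho,\sigma)}(P))=\st$. I expect the only delicate point to be keeping the direction of the inequalities consistent in the vector-algebra step, which is already suggested by the arguments in Lemmas~\ref{separacion} and~\ref{le recorte de dominio}.
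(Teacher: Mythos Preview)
Your proof is correct and follows essentially the same approach as the paper's: both use Lemma~\ref{le recorte de dominio}(1) to handle the points with $v_{-1,1}(i/l,j)\le v_{-1,1}(\en)$, and the minimality of $(\rho',\sigma')$ in $\Dirsup_P(\rho,\sigma)$ together with the cross-product/direction computation to handle the remaining points. The only organizational difference is that the paper establishes $v_{\rho',\sigma'}(P)=v_{\rho',\sigma'}(\en)$ by contradiction (a point with strictly larger $v_{\rho',\sigma'}$ would produce a direction in $\Dirsup_P(\rho,\sigma)$ smaller than the minimum), whereas you prove the inequality $v_{\rho',\sigma'}(i/l,j)\le v_{\rho',\sigma'}(\en)$ directly with a full characterization of equality; this is a minor stylistic difference, not a different argument.
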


\begin{proof} We only prove statement~(1), since~(2) is similar. Set  $(\rho_1,\sigma_1):=\Succ_P(\rho,\sigma)$. By definition, there exists an $(i_0/l,j_0)\in\Supp(P)$, such that
$$
v_{-1,1}(i_0/l,j_0)>v_{-1,1}(\en)\quad\text{and}\quad (\rho_1,\sigma_1)= \dir\bigl((i_0/l,j_0)-\en\bigr).
$$
Therefore,
\begin{equation}
(i_0/l,j_0)\ne\en\quad\text{and}\quad v_{\rho_1,\sigma_1}(\en) = v_{\rho_1,\sigma_1}(i_0/l,j_0).\label{eq29}
\end{equation}
Hence $(\rho_1,\sigma_1)\ne(-1,1)$, since otherwise, $v_{\rho_1,\sigma_1}(\en) < v_{\rho_1,\sigma_1}(i_0/l,j_0)$. We claim that
$$
v_{\rho_1,\sigma_1}(P) = v_{\rho_1,\sigma_1}(\en),
$$
which, by~\eqref{eq29}, proves that $(\rho_1,\sigma_1)\in\Dir(P)$. In fact, assume on the contrary that there exists $(i/l,j)\in\Supp(P)$ with
\begin{equation}\label{ineq}
v_{\rho_1,\sigma_1}(i/l,j)>v_{\rho_1,\sigma_1}(\en)
\end{equation}
By statement~(1) of Lemma~\ref{separacion} and statement~(1) of Lemma~\ref{le recorte de dominio},
$$
v_{-1,1}(i/l,j) > v_{-1,1}(\en),
$$
and consequently $(a,b):=(i/l,j)-\en$ satisfies $b-a>0$. Hence
$$
(\rho_2,\sigma_2):=\dir\bigl((i/l,j)-\en\bigr) =\dir(a,b)=\lambda (b,-a)
$$
with $\lambda>0$. Now~\eqref{ineq} leads to
$$
0 < (\rho_1,\sigma_1).(a,b)=\frac{1}{\lambda}(\rho_2\sigma_1-\sigma_2\rho_1)
=\frac{1}{\lambda}(\rho_2,\sigma_2)\times(\rho_1,\sigma_1),
$$
which implies that $(\rho_2,\sigma_2)<(\rho_1,\sigma_1)$. But this fact is impossible, since $(\rho_1,\sigma_1)$ is minimal in $\Dirsup_P(\rho,\sigma)$ and $(\rho_2,\sigma_2)\in\Dirsup_P(\rho,\sigma)$. This proves the claim and so $\Succ_P(\rho,\sigma)\in\Dir(P)$. Finally we will check that $\en=\st_{\rho_1,\sigma_1}(P)$. For this, it suffices to prove that
$$
(i/l,j)\in\Supp(\ell_{\rho_1,\sigma_1}(P))\Longrightarrow v_{1,-1}(i/l,j)\le v_{1,-1}(\en)
$$
or, equivalently, that $v_{-1,1}(i/l,j)\ge v_{-1,1}(\en)$. To do this we first note that by statement~(1) of Lemma~\ref{separacion} we have $(\rho_1,\sigma_1) > (\rho,\sigma)$. Since, moreover $(i/l,j)\in\Supp(P)$ and $(\rho_1,\sigma_1)\ne (-1,1)$, using statement~(1) of Lemma~\ref{le recorte de dominio}, it follows that
$$
v_{-1,1}(i/l,j)< v_{-1,1}(\en)\Longrightarrow v_{\rho_1,\sigma_1}(i/l,j)< v_{\rho_1,\sigma_1}(\en),
$$
which is a contradiction.
\end{proof}

\begin{proposition}\label{le basico} Let $P\!\in\! A_1^{(l)}\!\setminus\!\{0\}$ and let $(\rho_1,\sigma_1)\!>\!(\rho_2,\sigma_2)$ be consecutive elements in~$\ov{\Dir}(P)$.

\begin{enumerate}

\smallskip

\item If $(\rho_1,\sigma_1)\in\Dir(P)$ and $(\rho_1,\sigma_1)> (\rho,\sigma)\ge (\rho_2,\sigma_2)$, then $(\rho_1,\sigma_1) = \Succ_P(\rho,\sigma)$.

\smallskip

\item If $(\rho_2,\sigma_2)\in\Dir(P)$ and $(\rho_1,\sigma_1)\ge (\rho,\sigma) > (\rho_2,\sigma_2)$, then $(\rho_2,\sigma_2) = \Pred_P(\rho,\sigma)$.

\smallskip

\item If $(\rho_1,\sigma_1)> (\rho,\sigma) >(\rho_2,\sigma_2)$, then $\{\st_{\rho_1,\sigma_1}(P)\} =\Supp(\ell_{\rho,\sigma}(P)) =\{\en_{\rho_2,\sigma_2}(P)\}$.

\end{enumerate}

\end{proposition}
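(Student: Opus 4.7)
The three assertions are proved together using Lemmas~\ref{separacion}, \ref{le recorte de dominio} and~\ref{predysucc}. The guiding geometric observation is that the consecutivity hypothesis forces $(\rho,\sigma)\notin\Dir(P)$ whenever $(\rho_1,\sigma_1)>(\rho,\sigma)>(\rho_2,\sigma_2)$; consequently $\Supp(\ell_{\rho,\sigma}(P))$ collapses to a single point equal to both $\st_{\rho,\sigma}(P)$ and $\en_{\rho,\sigma}(P)$.

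For Part~(1), set $\en:=\en_{\rho,\sigma}(P)$, $S_1:=\st_{\rho_1,\sigma_1}(P)$ and $E_1:=\en_{\rho_1,\sigma_1}(P)$. The first key step is to establish $v_{-1,1}(E_1)>v_{-1,1}(\en)$. If this failed, Lemma~\ref{le recorte de dominio}(1) applied with $(\rho',\sigma')=(\rho_1,\sigma_1)$, combined with $v_{\rho_1,\sigma_1}(E_1)=v_{\rho_1,\sigma_1}(P)\ge v_{\rho_1,\sigma_1}(\en)$, would force $E_1=\en$; but then applying the same lemma to $S_1\ne E_1$, which exists because $(\rho_1,\sigma_1)\in\Dir(P)$, would give $v_{\rho_1,\sigma_1}(S_1)<v_{\rho_1,\sigma_1}(P)$, a contradiction. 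Consequently $(\rho'',\sigma''):=\dir(E_1-\en)$ is well defined and belongs to $\Dirsup_P(\rho,\sigma)$. A short cross-product computation, using $v_{\rho_1,\sigma_1}(E_1-\en)\ge 0$ and the fact that $E_1-\en$ is a positive multiple of $(-\sigma'',\rho'')$, yields $(\rho'',\sigma'')\le(\rho_1,\sigma_1)$, hence $\Succ_P(\rho,\sigma)\le(\rho_1,\sigma_1)$. Since Lemmas~\ref{separacion}(1) and~\ref{predysucc}(1) give $\Succ_P(\rho,\sigma)\in\Dir(P)$ with $\Succ_P(\rho,\sigma)>(\rho,\sigma)\ge(\rho_2,\sigma_2)$, and the consecutivity hypothesis makes $(\rho_1,\sigma_1)$ the least element of $\Dir(P)$ exceeding $(\rho_2,\sigma_2)$, the reverse inequality also holds and we conclude $\Succ_P(\rho,\sigma)=(\rho_1,\sigma_1)$. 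Part~(2) is proved by the symmetric argument with $\Pred_P$, $\Dirinf_P$ and Lemma~\ref{le recorte de dominio}(2).

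For Part~(3), let $p$ denote the unique element of $\Supp(\ell_{\rho,\sigma}(P))$. When $(\rho_1,\sigma_1)\in\Dir(P)$, Part~(1) together with Lemma~\ref{predysucc}(1) yields $p=\en_{\rho,\sigma}(P)=\st_{\rho_1,\sigma_1}(P)$. Otherwise $(\rho_1,\sigma_1)=(-1,1)$, so $\Dir(P)$ has no element exceeding $(\rho_2,\sigma_2)$; Lemmas~\ref{separacion}(1) and~\ref{predysucc}(1) then force $\Dirsup_P(\rho,\sigma)=\emptyset$, meaning no point of $\Supp(P)$ exceeds $p$ in $v_{-1,1}$. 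Hence $p\in\Supp(\ell_{-1,1}(P))$, and any other $q$ in this support satisfies $q-p=t(1,1)$ for some $t\ne 0$; the inequality $v_{\rho,\sigma}(q)\le v_{\rho,\sigma}(p)$ together with $\rho+\sigma>0$ forces $t<0$, so $p$ is the rightmost point of $\Supp(\ell_{-1,1}(P))$ and $p=\st_{-1,1}(P)$. The equality $p=\en_{\rho_2,\sigma_2}(P)$ is handled symmetrically, either via Part~(2) (when $(\rho_2,\sigma_2)\in\Dir(P)$) or by an analogous direct argument (when $(\rho_2,\sigma_2)=(1,-1)$).

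The main obstacle is the sign bookkeeping in Part~(1): one must carefully verify that the inequality $v_{\rho_1,\sigma_1}(E_1-\en)\ge 0$ translates into the correct direction inequality $\dir(E_1-\en)\le(\rho_1,\sigma_1)$ relative to the counterclockwise order on $\ov{\mathfrak{V}}$, rather than into its reverse.
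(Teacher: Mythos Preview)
Your proof is correct and follows essentially the same approach as the paper, relying on Lemmas~\ref{separacion}, \ref{le recorte de dominio} and~\ref{predysucc} in the same way. The one noteworthy tactical difference is in Part~(1): to obtain $\Succ_P(\rho,\sigma)\le(\rho_1,\sigma_1)$ you exhibit an explicit witness $\dir(E_1-\en)\in\Dirsup_P(\rho,\sigma)$ and bound it above by $(\rho_1,\sigma_1)$ via the cross-product computation, whereas the paper instead argues that no element of $\Dir(P)$ can lie strictly between $(\rho,\sigma)$ and $\Succ_P(\rho,\sigma)$, concluding that $\Succ_P(\rho,\sigma)$ is the least element of $\Dir(P)$ above $(\rho,\sigma)$ and therefore equals $(\rho_1,\sigma_1)$. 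Your route is slightly more direct; the paper's has the side benefit of establishing along the way the content of Remark~\ref{sucesor y predecesor}. In Part~(3) your case analysis (working with $\Dirsup_P(\rho,\sigma)$ rather than $\Dirsup_P(\rho_2,\sigma_2)$ in the boundary case) is also a minor streamlining of the paper's argument, but the substance is the same.
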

\begin{figure}[h]
\centering
\begin{tikzpicture}
\draw[step=.5cm,gray,very thin] (0,0) grid (3.2,3.4);
\draw [->] (0,0) -- (4,0) node[anchor=north]{$X$};
\draw [->] (0,0) --  (0,3.7) node[anchor=east]{$Y$};
\draw (3,0) -- (2.5,2) -- (1.5,3) -- (0,2.5);
\draw[very thin] (0.5,3.25) -- (2.5,2.75);
\draw [->]  (0,0) -- (-0.5,1.5);
\draw [->]  (0,0) -- (1,1);
\draw [->]  (0,0) -- (0.5,2);
\draw (-1.2,1) node {$(\rho_1,\sigma_1)$};
\draw (1,1.5) node[fill=white] {$(\rho,\sigma)$};
\draw (1.5,0.5) node[fill=white] {$(\rho_2,\sigma_2)$};
\filldraw [gray] (1.5,3) circle (2pt)  node[black, above=2pt, fill=white]{$A$};
\draw[xshift=3.9cm, yshift=2cm]
node[right,text width=4.5cm]{$A:=\st_{\rho_1,\sigma_1}(P)=\en_{\rho_2,\sigma_2}(P)$};
\draw[xshift=3.9cm, yshift=1.5cm]
node[right,text width=4.5cm]{$\{A\}=\Supp(\ell_{\rho,\sigma}(P))$};
\end{tikzpicture}
\caption{Proposition~\ref{le basico}.}
\end{figure}

\begin{proof} (1)\enspace By statement~(1) of Lemma~\ref{separacion} and statement~(1) of Lemma~\ref{predysucc}, the existence of $\Succ_P(\rho,\sigma)$ implies
$$
(\rho,\sigma)<\Succ_P(\rho,\sigma)\quad\text{and}\quad\Succ_P(\rho,\sigma)\in\Dir(P).
$$
Hence $(\rho_1,\sigma_1)\le\Succ_P(\rho,\sigma)$. Consequently, we are reduced to prove that $\Succ_P(\rho,\sigma)$ exists and that $(\rho_1,\sigma_1)\ge\Succ_P(\rho,\sigma)$. For the existence it suffices to check that $\Dirsup_P(\rho,\sigma)\ne\emptyset$. Assume on the contrary that $\Dirsup_P(\rho,\sigma) =\emptyset$. Then, by definition
$$
v_{-1,1}(i/l,j)\le v_{-1,1}(\en_{\rho,\sigma})(P)\quad\text{for all $(i/l,j)\in\Supp(P)$.}
$$
Therefore, since $(\rho_1,\sigma_1)\ne (-1,1)$, from statement~(1) of Lemma~\ref{le recorte de dominio} it follows that
$$
\Supp(\ell_{\rho_1,\sigma_1}(P)) =\{\en_{\rho,\sigma}(P)\},
$$
and so $(\rho_1,\sigma_1)\notin\Dir(P)$, which is a contradiction. Now we will prove that $(\rho_1,\sigma_1)\ge\Succ_P(\rho,\sigma)$. Since $(\rho_1,\sigma_1)$ is the minimum element of $\Dir(P)$ that is greater than $(\rho,\sigma)$, it suffices to prove that there exists no $(\rho_3,\sigma_3)\in\Dir(P)$ such that $\Succ_{\rho,\sigma}(P)> (\rho_3,\sigma_3)>(\rho,\sigma)$. In other words that
\begin{equation*}
\Succ_{\rho,\sigma}(P)> (\rho_3,\sigma_3)>(\rho,\sigma)\Longrightarrow (\rho_3,\sigma_3)\notin\Dir(P).
\end{equation*}
So assume that $\Succ_P(\rho,\sigma)> (\rho_3,\sigma_3) > (\rho,\sigma)$ and take  $(i/l,j)\in \Supp(\ell_{\rho_3,\sigma_3}(P))$. We claim that $(i/l,j)=\en_{\rho,\sigma}(P)$, which will show that $\Supp(\ell_{\rho_3,\sigma_3}(P))=\{\en_{\rho,\sigma}(P)\}$, and consequently, that $(\rho_3,\sigma_3)\notin \Dir(P)$. If $v_{-1,1}(i/l,j)\le v_{-1,1}(\en_{\rho,\sigma}(P))$, then the claim follows from statement~(1) of Lemma~\ref{le recorte de dominio}, applied to $(\rho_3,\sigma_3)$ instead of $(\rho_1,\sigma_1)$. Thus we can assume without loss of generality, that $v_{-1,1}(i/l,j)\ge v_{-1,1}(\en_{\rho,\sigma}(P))$. Now, by statement~(1) of Lemma~\ref{predysucc}, we know that $\st_{\Succ_P(\rho,\sigma)}(P) =\en_{\rho,\sigma}(P)$, and so
$$
v_{1,-1}(i/l,j)\le v_{1,-1}(\en) = v_{1,-1}\bigl(\st_{\Succ_P(\rho,\sigma)}(P)\bigr).
$$
Consequently, applying statement~(2) of Lemma~\ref{le recorte de dominio}, with $\Succ_P(\rho,\sigma)$ instead of $(\rho,\sigma)$ and $(\rho_3,\sigma_3)$ instead of $(\rho_1,\sigma_1)$, and taking into account that $(i/l,j)\in\Supp(\ell_{\rho_3,\sigma_3}(P))$, we obtain
$$
v_{\rho_3,\sigma_3}(i/l,j) = v_{\rho_3,\sigma_3}\bigl(\st_{\Succ_{\rho,\sigma}(P)}(P)\bigr).
$$
But, since $(\rho_3,\sigma_3)\ne (1,-1)$, from statement~(2) of Lemma~\ref{le recorte de dominio} it follows  that
$$
(i/l,j) =\st_{\Succ_{\rho,\sigma}(P)}(P) =\en_{\rho,\sigma}(P),
$$
which proves the claim.

\smallskip

\noindent (2)\enspace It is similar to the proof of statement~(1).

\smallskip

\noindent (3)\enspace Since $\{\en_P(\rho,\sigma)\} =\Supp(\ell_{\rho,\sigma}(P))$, from statement~(1) and  Lemma~\ref{predysucc} it follows that
$$
(\rho_1,\sigma_1)\in\Dir(P) \Longrightarrow\{\st_{\rho_1,\sigma_1}(P)\} =\Supp(\ell_{\rho,\sigma}(P)).
$$
This conclude the proof of the first equality in~(3) when $(\rho_1,\sigma_1)<(-1,1)$. A symmetric argument shows that
$$
(\rho_2,\sigma_2)\in\Dir(P) \Longrightarrow\{\en_{\rho_2,\sigma_2}(P)\} =\Supp(\ell_{\rho,\sigma}(P)),
$$
which gives the second equality in~(3) when $(\rho_2,\sigma_2)>(1,-1)$. Assume now that
$$
(\rho_1,\sigma_1)=(-1,1)\qquad\text{and}\qquad (\rho_2,\sigma_2)\ne (1,-1).
$$
Then $\Dirsup_{\rho_2,\sigma_2}(P)=\emptyset$ by statement~(1) of Lemma~\ref{separacion} and statement~(1) of Lemma~\ref{predysucc}. Hence
$$
v_{-1,1}(i/l,j)\le v_{-1,1}(\en_{\rho_2,\sigma_2}(P)),
$$
for all $(i/l,j)\in\Supp(P)$. Consequently, $\en_{\rho_2,\sigma_2}(P)\in\Supp\bigl(\ell_{-1,1} (P)\bigr)$, and so
$$
\st_{-1,1}(P) =\en_{\rho_2,\sigma_2}(P)+(a,a),
$$
for some $a\ge 0$. But necessarily $a=0$, since $a>0$ leads to the contradiction
$$
v_{\rho_2,\sigma_2}(\st_{-1,1}(P))= v_{\rho_2,\sigma_2}(\en_{\rho_2,\sigma_2}(P)+(a,a))= v_{\rho_2,\sigma_2}(P)+ a(\rho_2+\sigma_2).
$$
Thus
$$
\{\st_{\rho_1,\sigma_1}(P)\} =\{\en_{\rho_2,\sigma_2}(P)\} =\Supp(\ell_{\rho,\sigma}(P)).
$$
Similarly, if $(\rho_1,\sigma_1)\ne (-1,1)$ and $(\rho_2,\sigma_2)= (1,-1)$, then
$$
\{\st_{\rho_1,\sigma_1}(P)\} =\{\en_{\rho_2,\sigma_2}(P)\} =\Supp(\ell_{\rho,\sigma}(P)).
$$
Finally assume that $(\rho_1,\sigma_1) = (-1,1)$ and $(\rho_2,\sigma_2) = (1,-1)$. Then $\Dir(P) = \emptyset$ and hence, by Lemma~\ref{predysucc} we know that $\Dirsup_P(\rho,\sigma) =\Dirinf_P(\rho,\sigma)=\emptyset$. Therefore
\begin{equation}
v_{-1,1}(P) = v_{-1,1}(\en_{\rho,\sigma}(P))\quad\text{and}\quad v_{1,-1}(P) = v_{1,-1}(\st_{\rho,\sigma}(P)).\label{eq30}
\end{equation}
But, since $\en_{\rho,\sigma}(P)=\st_{\rho,\sigma}(P)$, equalities~\eqref{eq30} imply that $P =\ell_{-1,1}(P)$. So,
$$
\{\en_{1,-1}(P)\}=\{w(P)\}=\{\ov{w}(P)\}=\{\st_{-1,1}(P)\} =\Supp\bigl(\ell_{\rho,\sigma}(P)\bigr),
$$
as desired.
\end{proof}

\begin{remark}\label{sucesor y predecesor} Let $(\rho,\sigma)\in \mathfrak{V}$. By statement~(1) of Lemma~\ref{separacion}, statement~(1) of Lemma~\ref{predysucc} and statement~(1) of Proposition~\ref{le basico}, we know that
$$
\text{there exists } \Succ_P(\rho,\sigma) \Leftrightarrow \Dirsup_P(\rho,\sigma)\ne \emptyset\Leftrightarrow \{(\rho',\sigma')\in \Dir(P):(\rho,\sigma)< (\rho',\sigma')\}\ne \emptyset,
$$
and, in this case,
$$
\Succ_P(\rho,\sigma) = \min\{(\rho',\sigma')\in \Dir(P):(\rho,\sigma)< (\rho',\sigma')\}.
$$
Similarly,
$$
\text{there exists }\Succ_P(\rho,\sigma) \Leftrightarrow  \Dirinf_P(\rho,\sigma)\ne \emptyset\Leftrightarrow \{(\rho',\sigma')\in \Dir(P):(\rho',\sigma')< (\rho,\sigma)\} \ne \emptyset,
$$
and, in this case,
$$
\Pred_P(\rho,\sigma) = \max\{(\rho',\sigma')\in \Dir(P):(\rho',\sigma')< (\rho,\sigma)\}.
$$
\end{remark}

\begin{proposition}\label{le basico1} Let $P\in A_1^{(l)}\setminus\{0\}$ and $(\rho,\sigma)\in Val(P)$. We have:

\begin{enumerate}

\smallskip

\item If $(\rho',\sigma')\in\ov{\mathfrak{V}}$ satisfy $(\rho,\sigma)<(\rho',\sigma')$, then $v_{\rho',\sigma'}\bigl(\st_{\rho,\sigma}(P)\bigr) < v_{\rho',\sigma'}\bigl(\en_{\rho,\sigma}(P)\bigr)$,

\smallskip

\item If $(\rho',\sigma')\in\ov{\mathfrak{V}}$ satisfy $(\rho',\sigma')<(\rho,\sigma)$, then $v_{\rho',\sigma'}\bigl(\st_{\rho,\sigma}(P)\bigr) > v_{\rho',\sigma'}\bigl(\en_{\rho,\sigma}(P)\bigr)$.

\smallskip

\end{enumerate}
The same properties hold for $P\in L^{(l)}\setminus\{0\}$.
\end{proposition}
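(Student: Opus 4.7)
The plan is to reduce everything to the single geometric fact that $\en_{\rho,\sigma}(P)-\st_{\rho,\sigma}(P)$ is a strictly positive scalar multiple of $(-\sigma,\rho)$. Once this is established, both inequalities follow from a direct cross-product computation and the definition of the order on $\ov{\mathfrak{V}}$.

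First I would pin down the shape of $\en_{\rho,\sigma}(P)-\st_{\rho,\sigma}(P)$. Both points lie on the line $L:=\{(x,y):\rho x+\sigma y=v_{\rho,\sigma}(P)\}$, which can be parameterized by $t\mapsto(x_0-\sigma t,\,y_0+\rho t)$. Along $L$ the function $v_{1,-1}(x,y)=x-y$ has derivative $-(\rho+\sigma)<0$ in $t$, because the hypothesis $(\rho,\sigma)\in\mathfrak{V}$ gives $\rho+\sigma>0$; in particular, distinct points of $\Supp(\ell_{\rho,\sigma}(P))$ have pairwise distinct $v_{1,-1}$-values and pairwise distinct $v_{-1,1}$-values. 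The definitions of $w$ and $\ov w$ therefore make $\st_{\rho,\sigma}(P)$ the unique point of $\Supp(\ell_{\rho,\sigma}(P))$ with maximum $v_{1,-1}$ (equivalently minimum $t$), and $\en_{\rho,\sigma}(P)$ the unique point with maximum $v_{-1,1}$ (equivalently maximum $t$). Hence
\[
\en_{\rho,\sigma}(P)-\st_{\rho,\sigma}(P)=\mu\,(-\sigma,\rho),\qquad \mu:=t_{\max}-t_{\min},
\]
and $\mu>0$ because $\#\Supp(\ell_{\rho,\sigma}(P))>1$, which is precisely the hypothesis $(\rho,\sigma)\in\Dir(P)$.

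With this in hand the conclusion is immediate. Computing,
\[
v_{\rho',\sigma'}\bigl(\en_{\rho,\sigma}(P)\bigr)-v_{\rho',\sigma'}\bigl(\st_{\rho,\sigma}(P)\bigr)=\mu\,(-\sigma\rho'+\rho\sigma')=\mu\,(\rho,\sigma)\times(\rho',\sigma'),
\]
and since $\mu>0$ the sign of the left-hand side coincides with the sign of $(\rho,\sigma)\times(\rho',\sigma')$. If $(\rho,\sigma)<(\rho',\sigma')$ with $(\rho',\sigma')\in\mathfrak{V}$, this cross product is positive by the definition of the order on $\mathfrak{V}$, while the boundary case $(\rho',\sigma')=(-1,1)$ gives $(\rho,\sigma)\times(-1,1)=\rho+\sigma>0$; this proves (1). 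Symmetrically, if $(\rho',\sigma')<(\rho,\sigma)$ with $(\rho',\sigma')\in\mathfrak{V}$ the cross product is negative, and the boundary case $(\rho',\sigma')=(1,-1)$ yields $(\rho,\sigma)\times(1,-1)=-(\rho+\sigma)<0$; this proves (2). The same argument works verbatim for $P\in L^{(l)}\setminus\{0\}$, since $\Supp$, $\st_{\rho,\sigma}$ and $\en_{\rho,\sigma}$ are defined identically in the two settings.

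There is no hard step. The only real care needed is in fixing the orientation so that $\mu>0$ (this is where the convention that $w$ maximizes $v_{1,-1}$ while $\ov w$ maximizes $v_{-1,1}$ enters decisively) and in handling the two boundary directions $(\pm 1,\mp 1)$, where the order is defined by convention rather than by the cross product but where a direct evaluation still gives the required sign.
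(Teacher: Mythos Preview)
Your proof is correct and, in fact, more direct than the paper's. The paper proves statement~(1) by invoking its Lemma~\ref{le recorte de dominio}(1) with $(i/l,j)=\st_{\rho,\sigma}(P)$: that lemma says that if $(\rho',\sigma')>(\rho,\sigma)$ and $v_{-1,1}(i/l,j)\le v_{-1,1}(\en)$, then $v_{\rho',\sigma'}(i/l,j)\le v_{\rho',\sigma'}(\en)$, with equality (for $(\rho',\sigma')\ne(-1,1)$) only when $(i/l,j)=\en$. The boundary case $(\rho',\sigma')=(-1,1)$ is then handled separately. That lemma is proved by a chain of algebraic inequalities and is more general, since it applies to any $(i/l,j)\in\Supp(P)$, not only to $\st_{\rho,\sigma}(P)$.

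Your argument, by contrast, exploits the specific fact that $\st_{\rho,\sigma}(P)$ and $\en_{\rho,\sigma}(P)$ lie on the \emph{same} line $\rho x+\sigma y=v_{\rho,\sigma}(P)$, so that their difference is a positive multiple of $(-\sigma,\rho)$; the sign of $v_{\rho',\sigma'}(\en)-v_{\rho',\sigma'}(\st)$ then reduces immediately to the sign of $(\rho,\sigma)\times(\rho',\sigma')$. This is entirely self-contained and geometrically transparent, at the cost of not yielding the stronger statement about arbitrary support points that the paper's Lemma~\ref{le recorte de dominio} provides (and which the paper needs elsewhere, e.g.\ in the proof of Proposition~\ref{le basico}). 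One small remark: your separate treatment of $(\rho',\sigma')\in\{(1,-1),(-1,1)\}$ is not strictly necessary, since the paper observes that the cross-product characterization of the order holds on all of $\ov{\mathfrak{V}}$ except for the pair $\{(1,-1),(-1,1)\}$ itself, and here $(\rho,\sigma)\in\mathfrak{V}$.
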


\begin{proof} We only prove the first statement because the second one is similar. If $(\rho',\sigma')\ne (-1,1)$ we apply statement~(1) of Lemma~\ref{le recorte de dominio} with $(i/l,j) = \st_{\rho,\sigma}(P)$. The case $(\rho',\sigma') = (-1,1)$ is straightforward, since $\rho+\sigma>0$ and $\st_{\rho,\sigma}(P) \ne \en_{\rho,\sigma}(P)$.
\end{proof}

\section{Fixed points of $\bm{(\rho,\sigma)}$-brackets}

\setcounter{equation}{0}

In this section we make explicit in formula~\eqref{eq central} a result of~\cite{J}, and analyze its consequences on the shape of minimal pairs.

\smallskip

By~\cite{J}*{Prop.~3.2} the Poisson bracket defined there in ~\cite{J}*{p.~599}, satisfies
$$
\{\ell_{\rho,\sigma}(P),\ell_{\rho,\sigma}(Q)\}=-[P,Q]_{\rho,\sigma}\qquad\text{for all $(\rho,\sigma)\in \mathfrak{V}$ and $P,Q\in A_1^{(l)}$}.
$$
Assume now that $[Q,P]=1$. It is clear that the conditions of~\cite{J}*{Lemma 3.4} are satisfied for $x=P$, $y=Q$, $r=\rho$ and $s=\sigma$. By~\cite{J}*{Corollary 3.5}, there exists an $R\in K[P,Q]$ such that $[P,R]_{\rho,\sigma}\ne 0$ and
$$
[P,(\Psi^{(l)})^{-1}([P,R]_{\rho,\sigma})]_{\rho,\sigma}=0.
$$
Consequently, if $v_{\rho,\sigma}(P)>0$, then we can apply~\cite{J}*{Lemma 2.2} with $f=\ell_{\rho,\sigma}(P)$, $g=\ell_{\rho,\sigma}(R)$, $r=\rho$ and $s=\sigma$. So, there exists a $(\rho,\sigma)$-homogeneous element $h\in L^{(l)}$ with $v_{\rho,\sigma}(h) = \rho+\sigma$, such that
$$
\{\ell_{\rho,\sigma}(P),h\} = \ell_{\rho,\sigma}(P).
$$
Moreover, if $P$ and $Q$ are in $A_1$, then $h\in L$.

\begin{theorem}\label{central} Let $P\in A_1^{(l)}$ and let $(\rho,\sigma)\in \mathfrak{V}$ be such that $v_{\rho,\sigma}(P)>0$. If $[Q,P]=1$ for some $Q\in A_1^{(l)}$, then there exists a $(\rho,\sigma)$-ho\-mo\-ge\-neous element $F\in A_1^{(l)}$ such that $v_{\rho,\sigma}(F)=\rho+\sigma$ and
%
%
%
\begin{equation}\label{eq central}
[P,F]_{\rho,\sigma}=\ell_{\rho,\sigma}(P).
\end{equation}
Moreover, we have
\begin{enumerate}

\smallskip

\item $\st_{\rho,\sigma}(P)\sim\st_{\rho,\sigma}(F)$ or $\st_{\rho,\sigma}(F)=(1,1)$.

\smallskip

\item $\en_{\rho,\sigma}(P)\sim\en_{\rho,\sigma}(F)$ or $\en_{\rho,\sigma}(F)=(1,1)$.

\smallskip

\item $\st_{\rho,\sigma}(P)\nsim (1,1)\nsim\en_{\rho,\sigma}(P)$.

\smallskip

\item If $P,Q\in A_1$, then we can take $F\in A_1$.

\end{enumerate}
\end{theorem}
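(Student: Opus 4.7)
The plan is to invoke the results of~\cite{J} quoted in the paragraphs immediately preceding the statement and then deduce all additional properties from Proposition~\ref{extremosnoalineados}. Specifically, the chain of cited results produces a $(\rho,\sigma)$-homogeneous element $h\in L^{(l)}$ with $v_{\rho,\sigma}(h)=\rho+\sigma$ satisfying $\{\ell_{\rho,\sigma}(P),h\}=\ell_{\rho,\sigma}(P)$, where $\{-,-\}$ is the Poisson bracket of~\cite{J}*{p.~599}. I would then set $F:=-(\Psi^{(l)})^{-1}(h)$. Since $\Psi^{(l)}$ is a $K$-linear bijection that preserves supports, $(\rho,\sigma)$-degrees and the notion of $(\rho,\sigma)$-homogeneity, $F$ is a $(\rho,\sigma)$-homogeneous element of $A_1^{(l)}$ with $v_{\rho,\sigma}(F)=\rho+\sigma$. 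The identity $\{\ell_{\rho,\sigma}(P),\ell_{\rho,\sigma}(G)\}=-[P,G]_{\rho,\sigma}$ combined with the sign choice then yields $[P,F]_{\rho,\sigma}=\ell_{\rho,\sigma}(P)$, i.e.\ equation~\eqref{eq central}.

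For~(1) and~(2), I would apply Proposition~\ref{extremosnoalineados} with the choice $R:=P$. Since $[P,F]_{\rho,\sigma}=\ell_{\rho,\sigma}(P)$, one has $\st_{\rho,\sigma}([P,F])=\st_{\rho,\sigma}(P)$, so statement~(1) of that proposition reads: $\st_{\rho,\sigma}(P)\nsim\st_{\rho,\sigma}(F)$ if and only if $\st_{\rho,\sigma}(P)+\st_{\rho,\sigma}(F)-(1,1)=\st_{\rho,\sigma}(P)$, i.e.\ $\st_{\rho,\sigma}(F)=(1,1)$. This is precisely~(1). The analogous argument using statement~(2) of Proposition~\ref{extremosnoalineados} gives~(2).

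For~(3) the idea is that, by homogeneity, every point of the form $(b,b)$ which lies on the line $v_{\rho,\sigma}=\rho+\sigma$ must satisfy $b(\rho+\sigma)=\rho+\sigma$, hence $b=1$. Suppose now for contradiction that $\st_{\rho,\sigma}(P)\sim(1,1)$, so $\st_{\rho,\sigma}(P)=(a,a)$ for some $a>0$ (positivity follows from $v_{\rho,\sigma}(P)>0$ and $\rho+\sigma>0$). Both alternatives in~(1) lead to a contradiction: if $\st_{\rho,\sigma}(F)=(1,1)$, then $(a,a)\sim(1,1)=\st_{\rho,\sigma}(F)$, which by the ``iff'' of~(1) is impossible; and if $\st_{\rho,\sigma}(F)\ne(1,1)$, then~(1) forces $\st_{\rho,\sigma}(P)\sim\st_{\rho,\sigma}(F)$, whence $\st_{\rho,\sigma}(F)=(b,b)$ for some $b$, and the observation above gives $b=1$, again contradicting $\st_{\rho,\sigma}(F)\ne(1,1)$. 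The same argument, with~(2) replacing~(1), yields $\en_{\rho,\sigma}(P)\nsim(1,1)$.

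Finally, for~(4) I would simply trace $P,Q\in A_1$ through the cited construction: the auxiliary element $R$ provided by~\cite{J}*{Cor.~3.5} lies in $K[P,Q]\subseteq A_1$, Lemma~2.2 of~\cite{J} then outputs $h$ in $L=L^{(1)}$, and the restriction of $\Psi^{(l)}$ to $A_1$ is the bijection $\Psi\colon A_1\to L$, so $F=-\Psi^{-1}(h)\in A_1$. The only delicate step in the whole argument is the existence step, where care is required with the sign relating the Poisson bracket of~\cite{J} to $[-,-]_{\rho,\sigma}$ and with the fact that $\Psi^{(l)}$ is only a $K$-linear bijection and not an algebra isomorphism; parts~(1)--(4) are then fairly mechanical consequences of Proposition~\ref{extremosnoalineados} and of how Joseph's construction is built.
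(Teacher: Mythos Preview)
Your proposal is correct and follows essentially the same route as the paper: define $F:=-(\Psi^{(l)})^{-1}(h)$ from Joseph's $h$, read off~\eqref{eq central}, derive~(1) and~(2) from Proposition~\ref{extremosnoalineados} applied with $R=P$, and obtain~(4) from the fact that $h\in L$ when $P,Q\in A_1$. Your argument for~(3) is also the paper's argument, merely reorganized: the paper first shows that $\st_{\rho,\sigma}(P)\sim(1,1)$ forces $\st_{\rho,\sigma}(F)=(1,1)$ (your Case~2 computation $b=1$) and then uses the ``only if'' direction of Proposition~\ref{extremosnoalineados} to reach the contradiction $\st_{\rho,\sigma}(P)\nsim(1,1)$ (your Case~1), whereas you split into the two cases up front; the content is identical.
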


\begin{proof} Let $h$ be as above and let $F:=-(\Psi^{(l)})^{-1}(h)$. Equality~\eqref{eq central} follows easily from the previous discussion. Statements~(1) and~(2) follow directly from~\eqref{eq central} and Proposition~\ref{extremosnoalineados}. For the third statement, assume that $\st_{\rho,\sigma}(P)\sim (1,1)$.
We claim that this implies that $\st_{\rho,\sigma}(F) = (1,1)$. Otherwise, by statement~(1) we have
$$
\st_{\rho,\sigma}(F)\sim \st_{\rho,\sigma}(P)\sim (1,1),
$$
which implies $\st_{\rho,\sigma}(F)\sim (1,1)$, since $\st_{\rho,\sigma}(F)\ne (0,0)\ne \st_{\rho,\sigma}(P)$. So there exists $\lambda \in \mathds{Q}\setminus\{1\}$ such that $\st_{\rho,\sigma}(F) = \lambda (1,1)$. But this is impossible because $v_{\rho,\sigma}(F) = \rho + \sigma$. Hence the claim is true, and so
$$
\st_{\rho,\sigma}(P) + \st_{\rho,\sigma}(F)-(1,1) = \st_{\rho,\sigma}(P),
$$
which by Proposition~\ref{extremosnoalineados} leads to the contradiction
$$
\st_{\rho,\sigma}(P)\nsim \st_{\rho,\sigma}(F) = (1,1).
$$
Similarly $\en_{\rho,\sigma}(P)\nsim (1,1)$. Finally, statement~(4) is an immediate consequence of the fact that if $P,Q\in A_1$, then $h\in L$.
\end{proof}

\begin{definition}\label{def ecuacion PE} Let $k\!\in\!\mathds{N}$, $j\!\in\! \mathds{N}_0$, $\varepsilon,b\!>\!0$ and $c\!\in\!\mathds{Q}$. We say that a pair $(f,g)$ of poly\-nomials in $K[x]$ satisfies $\PE(k,j,\varepsilon,b,c)$ if there is some $h\in\mathds{N}_0$, such that
\begin{equation}
x^hf^{k+j}=cf^kg+ax(f^k)'g-bxg'f^k\label{eqdifferencial}
\end{equation}
is satisfied, where $a=\frac{j}{k}b+\varepsilon$.
\end{definition}

Note that equation~\eqref{eqdifferencial} implies that each irreducible factor of $g$ that does not divide $xf$ has multiplicity~$1$.

\begin{proposition} (Compare with~\cite{M}*{Appendix I})\label{pr conmutadormantienealC} If the pair $(f,g)$ satisfy $\PE(k,j,\varepsilon, b,c)$ and $f(0)\ne 0\ne g(0)$, then each irreducible factor $u$ of $f$, with multiplicity $m_u$ in $f$, has multiplicity $jm_u+1$ in $g$. Consequently $g = f^j\ov{g}$ for some $\ov{g}\in K[x]$ separable and the number of different irreducible factors of $f$ is lower than or equal to the degree of $\ov{g}$.

\end{proposition}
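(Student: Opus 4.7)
The plan is to work one irreducible factor of $f$ at a time, using the $u$-adic valuation $v_u$ on $K[x]$. Fix an irreducible factor $u$ of $f$, and set $m_u := v_u(f) \geq 1$ and $n_u := v_u(g) \geq 0$; the goal is to prove $n_u = jm_u + 1$. Write $f = u^{m_u} f_1$ and, when $n_u \geq 1$, $g = u^{n_u} g_1$ with $u \nmid f_1 g_1$. The key computation in characteristic zero is
$$
(f^k)' = k u^{km_u - 1} f_1^{k-1}\bigl(m_u u' f_1 + u f_1'\bigr),\qquad g' = u^{n_u - 1}\bigl(n_u u' g_1 + u g_1'\bigr),
$$
from which, after multiplication by $g$ and $f^k$ respectively, one obtains
$$
ax(f^k)'g - bxg'f^k \equiv (akm_u - bn_u)\, u^{km_u+n_u-1}\, xu'f_1^k g_1 \pmod{u^{km_u+n_u}}.
$$
The hypotheses $f(0), g(0) \neq 0$ force $u \neq x$, and $u \nmid u'$ by degree comparison, so the factor $xu'f_1^k g_1$ is a unit modulo $u$.

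With this, the $u$-valuations of the four summands in equation~\eqref{eqdifferencial} are transparent: the left-hand side has valuation $(k+j)m_u$, the summand $cf^k g$ has valuation $\geq km_u + n_u$, and the displayed difference has valuation exactly $km_u + n_u - 1$ provided $akm_u \neq bn_u$. A first case check rules out $n_u = 0$: then $ax(f^k)'g$ has valuation exactly $km_u - 1$ while every other summand has valuation $\geq km_u$, forcing $(k+j)m_u = km_u - 1$, which is impossible for $j \geq 0$, $m_u \geq 1$. Thus $n_u \geq 1$, and in the generic case $akm_u \neq bn_u$, matching valuations on both sides of~\eqref{eqdifferencial} gives $(k+j)m_u = km_u + n_u - 1$, i.e., $n_u = jm_u + 1$. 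The one subtle point, which we expect to be the main obstacle, is the resonant case $akm_u = bn_u$: substituting $a = jb/k + \varepsilon$ yields $n_u = jm_u + \varepsilon km_u/b > jm_u$, so the right-hand side would have $u$-valuation $\geq km_u + n_u > (k+j)m_u$, contradicting the left-hand side. This is precisely where the strict positivity $\varepsilon > 0$ enters.

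The two consequences then follow by bookkeeping. Since $v_u(g) = jm_u + 1$ for every irreducible factor $u$ of $f$, we have $f^j \mid g$ and $v_u(\ov g) = 1$ for $\ov g := g/f^j$. Any further irreducible factor $v$ of $\ov g$ must divide $g$ but not $f$, and $g(0) \neq 0$ rules out $v = x$, so by the observation following Definition~\ref{def ecuacion PE} we have $v_v(g) = 1$, whence $v_v(\ov g) = 1$. Therefore $\ov g$ is separable. Finally, the product of the distinct irreducible factors of $f$ divides $\ov g$; taking degrees and using $\deg u \geq 1$ for each such factor yields the bound: the number of distinct irreducible factors of $f$ is at most $\deg \ov g$.
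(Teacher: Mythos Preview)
Your proof is correct and follows essentially the same approach as the paper's: factor out the power of $u$ from $f$ and $g$, compute the derivatives, and compare $u$-adic valuations of both sides of~\eqref{eqdifferencial}. The paper passes to the algebraic closure so that $u$ is linear, while you work directly with irreducible $u$; and the paper organizes the resonant case by first reading $sj\ge r$ off the equation and then computing $aks-br>0$, whereas you compute $n_u>jm_u$ directly from $akm_u=bn_u$ and then contradict the equation --- but these are the same contradiction viewed from opposite ends. Your explicit treatment of the case $n_u=0$ is a small bonus: the paper's displayed identity formally covers it, but the paper writes $r\in\mathds{N}$ without separately justifying $r\ge 1$.
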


\begin{proof} We can assume that $K$ is algebraically closed. Take an irreducible monic factor $u$ of $f$. Since $f(0)\ne 0$, there exists $d\in K^{\times}$ such that $u = x+d$. Write $f = u^s\ov{f}$ and $g = u^r\ov{g}$, with $r,s\in\mathds{N}$ such that $u$ does not divide $\ov{f}\ov{g}$. Then
$$
f'=su^{s-1}u'\ov{f}+u^s\ov{f}'\quad\text{and}\quad g'=ru^{r-1}u'\ov{g}+u^r\ov{g}',
$$
and so~\eqref{eqdifferencial} reads
$$
x^h\ov{f}^{k+j}u^{s(k+j)}=u^{r+ks-1}(aks-br)xu'\ov{f}^k\ov{g}+u^{r+ks}\ov{f}^{k-1} \bigl(\ov{f}\ov{g}c + kax\ov{g}\ov{f}' -bx\ov{f}\ov{g}'\bigr).
$$
We claim that $aks-br\ne 0$. In fact, on the contrary $s(k+j)\ge r+ks$ and so $sj\ge r$. Since $\varepsilon, b,s,k>0$, this leads to the contradiction
$$
aks-br=\left(\frac jk b+\varepsilon\right)ks-br=ks\varepsilon+b(js-r)\ge ks\varepsilon >0.
$$
Since $u$ does not divide $xu'\ov{f}^k\ov{g}$, we have $s(k+j) = r+ks-1$. That is $r=js+1$, which proves the first assertion. The remaining assertions now follow easily.
\end{proof}

\begin{corollary}\label{pavadass} Let $(\rho,\sigma)\in\mathfrak{V}^0$ and let $P,F\in A_1^{(l)}\setminus\{0\}$. Assume that $F$ is $(\rho,\sigma)$-homogeneous and that $[P,F]_{\rho,\sigma}=\ell_{\rho,\sigma}(P)$. Set
$$
f_F:=f^{(l)}_{F,\rho,\sigma}\quad\text{and}\quad f_P:=f^{(l)}_{P,\rho,\sigma},
$$
where $f^{(l)}_{F,\rho,\sigma}$ and $f^{(l)}_{P,\rho,\sigma}$ are as in Definition~\ref{polinomio asociado f^{(l)}}. Then
\begin{enumerate}

\smallskip

\item $f_F$ is separable and every irreducible factor of $f_P$ divides $f_F$.

\smallskip

\item Suppose that $f_F,f_P\in K[x^r]$ for some $r\in\mathds{N}$ and let $\ov f_F$ and $f_F$ denote the univariate polynomials defined by $f_P(x)=\ov f_P(x^r)$ and $f_F(x)=\ov f_F(x^r)$. Then $\ov f_F$ is separable and every irreducible factor of $\ov f_P$ divides $\ov f_F$.

\smallskip

\item If $P,F\in A_1$ and $v_{0,1}(\en_{\rho,\sigma} (F))-v_{0,1}(\st_{\rho,\sigma} (F))=\rho$, then the multiplicity of each linear factor (in an algebraic closure of $K$) of $f_P$ is equal to
$$
\frac{1}{\rho} \deg(f_P)= \frac{1}{\rho}\bigl(v_{0,1}(\en_{\rho,\sigma} (P))-v_{0,1}(\st_{\rho,\sigma}(P) \bigr).
$$
\end{enumerate}
\end{corollary}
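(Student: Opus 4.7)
The plan is to convert the hypothesis $[P,F]_{\rho,\sigma}=\ell_{\rho,\sigma}(P)$ into a concrete polynomial identity relating $f_P$ and $f_F$ via Theorem~\ref{f[] en A_1^{(l)}}(1), recognise it as an instance of the equation $\PE$ from Definition~\ref{def ecuacion PE}, and then invoke Proposition~\ref{pr conmutadormantienealC} in each of the three cases.

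For statement~(1) I will apply Theorem~\ref{f[] en A_1^{(l)}}(1) with $Q=F$. Because $[P,F]_{\rho,\sigma}=\ell_{\rho,\sigma}(P)$ one has $f_{[P,F]}=f_P$, so the theorem gives
\[
x^h f_P = cf_Pf_F + axf_P'f_F - bxf_F'f_P,\qquad a=\tfrac{1}{\rho}v_{\rho,\sigma}(F),\ b=\tfrac{1}{\rho}v_{\rho,\sigma}(P),
\]
for some $h\in\mathds{N}_0$ and $c\in\mathds{Z}$. Combining $v_{\rho,\sigma}([P,F])=v_{\rho,\sigma}(P)$ with Remark~\ref{re v de un conmutador} forces $v_{\rho,\sigma}(F)\ge\rho+\sigma>0$, so $a>0$; in the intended context (namely Theorem~\ref{central}) one also has $v_{\rho,\sigma}(P)>0$, so $b>0$. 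Thus $(f_P,f_F)$ satisfies $\PE(1,0,a,b,c)$ with $\varepsilon:=a$, and $f_P(0),f_F(0)\ne 0$ by the very definition of $f^{(l)}_{-,\rho,\sigma}$. Proposition~\ref{pr conmutadormantienealC} with $k=1$, $j=0$ then gives~(1): every irreducible factor of $f_P$ has multiplicity exactly $1$ in $f_F$, and $f_F$ is separable.

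For~(2) I would substitute $f_P(x)=\bar f_P(x^r)$ and $f_F(x)=\bar f_F(x^r)$ into the identity above, using $f_P'(x)=rx^{r-1}\bar f_P'(x^r)$ and its analogue for $f_F$. The right-hand side then lies in $K[x^r]$, so $x^h\bar f_P(x^r)\in K[x^r]$ forces $r\mid h$; writing $h=r\bar h$ and passing to the variable $y=x^r$ produces
\[
y^{\bar h}\bar f_P(y)=c\bar f_P\bar f_F+ary\,\bar f_P'\bar f_F-bry\,\bar f_F'\bar f_P,
\]
which is $\PE(1,0,ar,br,c)$. A second application of Proposition~\ref{pr conmutadormantienealC} proves~(2).

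For~(3), I first observe that $P\in A_1$ forces the support of $P$ to lie in $\mathds{Z}^2$; the $x$-exponents appearing in $\ell_{\rho,\sigma}(P)$ are $r-i\sigma/\rho$, and $\gcd(\rho,\sigma)=1$ forces $\rho\mid i$. Hence $f_P\in K[x^\rho]$ and similarly $f_F\in K[x^\rho]$. By Remark~\ref{notita} the extra hypothesis gives $\deg f_F=\rho$, so $\deg\bar f_F=1$; part~(2) with $r=\rho$ then makes $\bar f_F$ linear and separable, say $\bar f_F=c_F(x-\alpha)$, and forces every irreducible factor of $\bar f_P$ to divide $\bar f_F$. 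Therefore $\bar f_P=c_P(x-\alpha)^m$ with $m=\deg\bar f_P$, and $f_P(x)=c_P(x^\rho-\alpha)^m$. Since $f_P(0)\ne 0$ we have $\alpha\ne 0$, so $x^\rho-\alpha$ splits over the algebraic closure into $\rho$ distinct linear factors, each to the $m$-th power; consequently every linear factor of $f_P$ appears with multiplicity exactly $m=\deg(f_P)/\rho$. The only delicate step is the bookkeeping in part~(2)---namely checking that the differential identity survives the substitution $y=x^r$ as $\PE(1,0,ar,br,c)$, and in particular extracting $r\mid h$ from the shape of the right-hand side; once this translation is in place, (1) and (2) follow mechanically from Proposition~\ref{pr conmutadormantienealC}, and (3) reduces to the splitting behaviour of $x^\rho-\alpha$.
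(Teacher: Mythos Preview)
Your proof is correct and follows essentially the same path as the paper's: apply Theorem~\ref{f[] en A_1^{(l)}}(1) to obtain the $\PE(1,0,a,b,c)$ identity and invoke Proposition~\ref{pr conmutadormantienealC} for~(1), substitute $y=x^r$ (using $xf_P'(x)=ry\bar f_P'(y)$) to get $\PE(1,0,ra,rb,c)$ for~(2), and for~(3) observe that $P,F\in A_1$ forces $f_P,f_F\in K[x^\rho]$ with $\deg\bar f_F=1$, then specialise~(2). Your caveat about needing $b>0$ is well taken---the paper's proof also tacitly uses this---and your extraction of $r\mid h$ is slightly more explicit than the paper's one-line substitution, but these are expository rather than substantive differences.
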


\begin{proof} By statement~(1) of Theorem~\ref{f[] en A_1^{(l)}}, there exist $h\ge 0$ and $c\in\mathds{Z}$, such that
\begin{equation}
x^h f_P = cf_P f_F+ ax f'_P f_F-bxf'_Ff_P,\label{eqq1}
\end{equation}
where $a:=\frac{1}{\rho} v_{\rho,\sigma}(F)$ and $b:=\frac{1}{\rho} v_{\rho,\sigma}(P)$. So, the pair $(f_P,f_F)$ satisfies condition $\PE(1,0,a,b,c)$ of Definition~\ref{def ecuacion PE}. Consequently, since $f_P\ne 0\ne f_F$, statement~(1) follows from Proposition~\ref{pr conmutadormantienealC}. Using now that
$$
xf_P'(x)=rt\ov f_P'(t)\quad\text{and}\quad xf_F'(x)=rt\ov f_F'(t),\qquad\text{where $t = x^r$,}
$$
we deduce from~\eqref{eqq1} that the pair $(\ov f_P,\ov f_F)$ satisfies condition $\PE(1,0,ra,rb,c)$. Therefore, we can apply Proposition~\ref{pr conmutadormantienealC} to obtain statement~(2). Finally, we prove statement~(3). Write
$$
F = \sum_{i=0}^{\alpha} b_i X^{u-i\sigma} Y^{v+i\rho}\quad \text{and}\quad \ell_{\rho,\sigma}(P) = \sum_{i=0}^{\gamma} c_i x^{m-i\sigma} y^{n+i\rho}
$$
with $b_0\ne 0$, $b_{\alpha}\ne 0$, $c_0\ne 0$ and $c_{\gamma}\ne 0$. By definition
$$
f_F = \sum_{i=0}^{\alpha} b_ix^{i\rho}\quad\text{and}\quad f_P = \sum_{i=0}^{\gamma} c_ix^{i\rho}.
$$
Moreover, since $\alpha\rho = v_{0,1}(\en_{\rho,\sigma} (F))-v_{0,1}(\st_{\rho,\sigma} (F))$ it follows from the hypothesis that $\alpha=1$. Hence
$$
f_F (x)= b_0 + b_1 x^{\rho} = \mu (x^{\rho}-\lambda)=\ov f_F(x^{\rho}),
$$
where $\mu:= b_0$ and $\lambda:= b_0/b_1$. Consequently, from statement~(2) it follows that $f_P$ is of the form $f_P(x) = \mu_P(x^{\rho}-\lambda)^{\gamma}$, which proves~(3).
\end{proof}

\section{Cutting the right lower edge}

\setcounter{equation}{0}

The central result of this section is Proposition~\ref{preparatoria}, which loosely spoken ``cuts'' the right lower edge of the support of a pair $(P,Q)$ satisfying certain conditions. In Proposition~\ref{lema general} we use this result and establish a strong algebraic condition on the ``corner'' resulting from this cut.

%
%

\smallskip

Given $\varphi\in\Aut(A_1^{(l)})$, we let $\varphi_L$ denote the automorphism of $L^{(l)}$ given by
$$
\varphi_L(x^{1/l}):=\Psi^{(l)}(\varphi(X^{1/l}))\qquad\text{and}\qquad \varphi_L(y):=\Psi^{(l)} (\varphi(Y)).
$$

\begin{proposition}\label{pr ell por automorfismos} Let $(\rho,\sigma)\in\mathfrak{V}^0$ and $\lambda\in K$. Assume that $\rho|l$ and consider the automorphism of $A_1^{(l)}$ defined by $\varphi(X^{1/l}) := X^{1/l}$ and $\varphi(Y) := Y+\lambda X^{\sigma/\rho}$. Then
$$
\ell_{\rho,\sigma}(\varphi(P)) =\varphi_L(\ell_{\rho,\sigma}(P))\quad\text{and}\quad v_{\rho,\sigma}(\varphi(P)) = v_{\rho,\sigma}(P)\quad\text{for all $P\in A_1^{(l)}\setminus\{0\}$}.
$$
Furthermore,
$$
\ell_{\rho_1,\sigma_1}(\varphi(P))=\ell_{\rho_1,\sigma_1}(P)\quad\text{for all $(\rho,\sigma)< (\rho_1,\sigma_1) < (-1,1)$.}
$$
\end{proposition}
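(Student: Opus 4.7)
My plan rests on the observation that $Z:=\varphi(Y)=Y+\lambda X^{\sigma/\rho}$ is $(\rho,\sigma)$-homogeneous in $A_1^{(l)}$ of $(\rho,\sigma)$-degree $\sigma$, since both $Y$ and $X^{\sigma/\rho}$ achieve this degree (the hypothesis $\rho\mid l$ guarantees that $X^{\sigma/\rho}\in A_1^{(l)}$). Note that $\varphi_L$ is an automorphism of $L^{(l)}$ with inverse $y\mapsto y-\lambda x^{\sigma/\rho}$, and by definition $\varphi_L(y)=y+\lambda x^{\sigma/\rho}=\Psi^{(l)}(Z)$.

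The key technical step is to compute $\ell_{\rho,\sigma}(X^{i/l}Z^j)$. Since $Z$ is $(\rho,\sigma)$-homogeneous, $\ell_{\rho,\sigma}(Z)=\varphi_L(y)$; an induction on $j$ using Proposition~\ref{pr v de un producto}(2) (applicable as $(\rho,\sigma)\in\mathfrak{V}^0\subseteq\mathfrak{V}$) gives $\ell_{\rho,\sigma}(Z^j)=\varphi_L(y)^j=\varphi_L(y^j)$, and a final application of the same proposition yields
$$
\ell_{\rho,\sigma}(X^{i/l}Z^j)=x^{i/l}\varphi_L(y^j)=\varphi_L(x^{i/l}y^j),\qquad v_{\rho,\sigma}(X^{i/l}Z^j)=\rho\tfrac{i}{l}+j\sigma.
$$

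Now write $P=\sum a_{i/l,j}X^{i/l}Y^j$, so that $\varphi(P)=\sum a_{i/l,j}X^{i/l}Z^j$. Summing the top terms of those monomials whose $(\rho,\sigma)$-degree attains $v_{\rho,\sigma}(P)$ produces the candidate $(\rho,\sigma)$-leading part
$$
C:=\sum_{\rho i/l+j\sigma=v_{\rho,\sigma}(P)}a_{i/l,j}\,\varphi_L(x^{i/l}y^j)=\varphi_L(\ell_{\rho,\sigma}(P)),
$$
while all remaining summands contribute strictly smaller $(\rho,\sigma)$-degree. Since $\varphi_L$ is an isomorphism of $L^{(l)}$ and $\ell_{\rho,\sigma}(P)\ne 0$, we have $C\ne 0$, which forces $\ell_{\rho,\sigma}(\varphi(P))=C=\varphi_L(\ell_{\rho,\sigma}(P))$ and $v_{\rho,\sigma}(\varphi(P))=v_{\rho,\sigma}(P)$.

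For the last assertion, fix $(\rho,\sigma)<(\rho_1,\sigma_1)<(-1,1)$. Using $\rho>0$, the condition $(\rho,\sigma)<(\rho_1,\sigma_1)$ reads $\rho\sigma_1>\rho_1\sigma$, i.e.\ $\sigma_1>\rho_1\sigma/\rho$, which is precisely the inequality $v_{\rho_1,\sigma_1}(Y)>v_{\rho_1,\sigma_1}(\lambda X^{\sigma/\rho})$. Hence $\ell_{\rho_1,\sigma_1}(Z)=y$, and iterating Proposition~\ref{pr v de un producto}(2) gives $\ell_{\rho_1,\sigma_1}(X^{i/l}Z^j)=x^{i/l}y^j$. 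Repeating the summation argument of the previous paragraph, the candidate $(\rho_1,\sigma_1)$-leading part of $\varphi(P)$ equals $\ell_{\rho_1,\sigma_1}(P)$, which is automatically nonzero; hence $\ell_{\rho_1,\sigma_1}(\varphi(P))=\ell_{\rho_1,\sigma_1}(P)$. The only delicate point throughout is the nonvanishing of these candidate top parts; in both cases this is immediate (from the injectivity of $\varphi_L$, and, in the ``furthermore'' case, from the top part coinciding literally with $\ell_{\rho_1,\sigma_1}(P)$).
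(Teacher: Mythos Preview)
Your proof is correct and follows essentially the same approach as the paper: both compute $\ell_{\rho,\sigma}$ and $v_{\rho,\sigma}$ on the generators $X^{i/l}Z^j$ via Proposition~\ref{pr v de un producto}(2)--(3), then pass to arbitrary $P$ by summation. Your handling of the non-cancellation step (checking that the candidate leading part $C=\varphi_L(\ell_{\rho,\sigma}(P))$ is nonzero via the injectivity of $\varphi_L$) is in fact slightly more explicit than the paper's, which derives $v_{\rho,\sigma}(\varphi(P))=v_{\rho,\sigma}(P)$ first by appealing to the injectivity of $\varphi$ (implicitly using that $\varphi^{-1}$ has the same form and hence also does not raise $(\rho,\sigma)$-degree).
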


\begin{proof} By statement~(3) of Proposition~\ref{pr v de un producto},
$$
v_{\rho,\sigma}\bigl(\varphi(X^{\frac{i}{l}} Y^j)\bigr) = i v_{\rho,\sigma}(X^{\frac{1}{l}})+ j v_{\rho,\sigma} (Y+\lambda X^{\frac{\sigma}{\rho}}) =\frac{i}{l}\rho + j\sigma = v_{\rho,\sigma}(X^{\frac{i}{l}} Y^j),
$$
for all $i\in\mathds{Z}$ and $j\in\mathds{N}_0$. Since $\varphi$ is injective this implies that
\begin{equation}
v_{\rho,\sigma}(\varphi(P)) = v_{\rho,\sigma}(P)\quad\text{for all $P\in A_1^{(l)}\setminus\{0\}$.}\label{eq23}
\end{equation}
We fix now a $P\in A_1^{(l)}\setminus\{0\}$ and write
$$
P =\sum_{i=0}^n\lambda_i X^{\frac{r}{l}-i\frac{\sigma}{\rho}}Y^{s+i} + R,
$$
with $v_{\rho,\sigma}(R) < v_{\rho,\sigma}(P)$. By equality~\eqref{eq23} and statement~(2) of Proposition~\ref{pr v de un producto}
\begin{align*}
\ell_{\rho,\sigma}(\varphi(P)) &=\ell_{\rho,\sigma}\left(\sum_{i=0}^n\lambda_i\varphi\bigl(X^{\frac{r}{l}- i\frac{\sigma}{\rho}}Y^{s+i}\bigr)\right)\\
& =\ell_{\rho,\sigma}\left(\sum_{i=0}^n\lambda_i X^{\frac{r}{l}-i\frac{\sigma}{\rho}}(Y+\lambda X^{\frac{\sigma}{\rho}})^{s+i}\right)\\
& =\sum_{i=0}^n\lambda_i\ell_{\rho,\sigma}\bigl(X^{\frac{r}{l}-i\frac{\sigma}{\rho}} (Y+\lambda X^{\frac{\sigma}{\rho}})^{s+i}\bigr)\\
& =\sum_{i=0}^n\lambda_i x^{\frac{r}{l}-i\frac{\sigma}{\rho}}(y+\lambda x^{\frac{\sigma}{\rho}})^{s+i}\\
& =\varphi_L\left(\sum_{i=0}^n\lambda_i x^{\frac{r}{l}-i\frac{\sigma}{\rho}} y^{s+i}\right)\\
& =\varphi_L(\ell_{\rho,\sigma}(P)),
\end{align*}
as desired. Let $(\rho_1,\sigma_1)\in\mathfrak{V}$ such that $(\rho,\sigma)<(\rho_1,\sigma_1)$. Then $\rho_1\sigma <\rho\sigma_1$, and so
$$
\ell_{\rho_1,\sigma_1}(Y+\lambda X^{\frac{\sigma}{\rho}}) = y,
$$
since $\rho>0$. Hence, by statement~(2) of Proposition~\ref{pr v de un producto},
$$
\ell_{\rho_1,\sigma_1}\bigl(\varphi(X^{\frac{i}{l}} Y^j)\bigr)=\ell_{\rho_1,\sigma_1}\bigl(X^{\frac{i}{l}} (Y+\lambda X^{\frac{\sigma}{\rho}})^j\bigr)= x^{\frac{i}{l}} y^j,
$$
and so $v_{\rho_1,\sigma_1}\bigl(\varphi(X^{\frac{i}{l}} Y^j)\bigr) =\frac{i}{l}\rho_1 + j\sigma_1 = v_{\rho_1,\sigma_1}(X^{\frac{i}{l}} Y^j)$, which implies that
\begin{equation}
v_{\rho_1,\sigma_1}(\varphi(R)) = v_{\rho_1,\sigma_1}(R)\quad\text{for all $R\in A_1^{(l)}\setminus \{0\}$.}\label{eq24}
\end{equation}
Fix now $P\in A_1^{(l)}\setminus\{0\}$ and write
$$
P =\sum_{\{(i/l,j):\rho_1 i/l +\sigma_1 j = v_{\rho_1,\sigma_1}(P)\}}\lambda_{i/l,j} X^{\frac{i}{l}}Y^j + R,
$$
with $v_{\rho_1,\sigma_1}(R)<v_{\rho_1,\sigma_1}(P)$. Again by equality~\eqref{eq23} and statement~(2) of Proposition~\ref{pr v de un producto}
\begin{align*}
\ell_{\rho_1,\sigma_1}(\varphi(P)) &=\ell_{\rho_1,\sigma_1}\biggl(\sum_{i/l,j} \lambda_{i/l,j} \varphi(X^{\frac{i}{l}}Y^j) \biggr)\\
& =\ell_{\rho_1,\sigma_1}\biggl(\sum_{i/l,j}\lambda_{i/l,j} X^{\frac{i}{l}}(Y+\lambda X^{\frac{\sigma}{\rho}})^j\biggr)\\
& =\sum_{i/l,j}\lambda_{i/l,j}\ell_{\rho_1,\sigma_1}\bigl(X^{\frac{i}{l}}(Y+\lambda X^{\frac{\sigma}{\rho}})^j\bigr)\\
& =\sum_{i/l,j}\lambda_{i/l,j} x^{\frac{i}{l}}y^j\\
& =\ell_{\rho_1,\sigma_1}(P),
\end{align*}
as we want.
\end{proof}

\begin{remark}\label{re ell por automorfismos} A similar argument shows that for $\lambda\in K$ and $n\in\mathds{N}$, the automorphism of $A_1$ defined by $\varphi(X):=X$ and $\varphi(Y):= Y-\lambda X^n$ satisfies
$$
\ell_{1,n}(\varphi(P)) = \varphi_L(\ell_{1,n}(P))\quad\text{and}\quad v_{1,n}(\varphi(P)) = v_{1,n}(P)\quad\text{for all $P\in A_1\setminus\{0\}$}.
$$
Similarly, if $\varphi$ is the automorphism of $A_1$ defined by $\varphi(X):=X-\lambda Y^n$ and $\varphi(Y) := Y$, then
$$
\ell_{n,1}(\varphi(P)) = \varphi_L(\ell_{n,1}(P))\quad\text{and}\quad v_{n,1}(\varphi(P)) = v_{n,1}(P)\quad\text{for all $P\in A_1\setminus\{0\}$}.
$$
\end{remark}

From now on we assume that $K$ is algebraically closed unless otherwise stated.

\smallskip

Let $P,Q\in A_1^{(l)}$ and let $(\rho,\sigma)\in\mathfrak{V}^0$. Write
$$
\st_{\rho,\sigma}(P) =\Bigl(\frac{r}{l},s\Bigr)\quad\text{and}\quad\mathfrak{f}(x) := x^s f^{(l)}_{P,\rho,\sigma}(x).
$$
Let $\varphi\in\Aut(A_1^{(l')})$ be the automorphism defined by
$$
\varphi(X^{\frac{1}{l'}}) := X^{\frac{1}{l'}}\quad\text{and}\quad\varphi(Y):= Y +\lambda X^{\frac{\sigma}{\rho}},
$$
where $l':=\lcm(l,\rho)$ and $\lambda$ is any element of $K$ such that the multiplicity $m_{\lambda}$ of $x-\lambda$ in $\mathfrak{f}(x)$ is maximum.

\begin{proposition} (Compare with~\cite{J}*{Corollary 2.6})\label{preparatoria} If
\begin{itemize}

\smallskip

\item[\rm{(a)}] $[Q,P]=1$,

\smallskip

\item[\rm{(b)}] $(\rho,\sigma)\in\Dir(P)\cap\Dir(Q)$,

\smallskip

\item[\rm{(c)}] $v_{\rho,\sigma}(P)>0$ and $v_{\rho,\sigma}(Q)>0$,

\smallskip

\item[\rm{(d)}] $[P,Q]_{\rho,\sigma}=0$,

\smallskip

\item[\rm{(e)}] $\frac{v_{\rho,\sigma}(Q)}{v_{\rho,\sigma}(P)}\notin\mathds{N}$ and $\frac{v_{\rho,\sigma}(P)}{v_{\rho,\sigma}(Q)}\notin\mathds{N}$,

\smallskip

\item[\rm{(f)}] $v_{1,-1}\bigl(\en_{\rho,\sigma}(P)\bigr)<0$ and $v_{1,-1}\bigl(\en_{\rho,\sigma}(Q)\bigr)< 0$,

\smallskip

\end{itemize}
then, there exists $(\rho',\sigma')\in\mathfrak{V}$ such that
\begin{enumerate}

\smallskip

\item $(\rho',\sigma')<(\rho,\sigma)$ and $(\rho',\sigma')\in\Dir(\varphi(P))\cap\Dir(\varphi(Q))$,

\smallskip

\item $v_{1,-1}\bigl(\en_{\rho',\sigma'}(\varphi(P))\bigr)<0$ and $v_{1,-1}\bigl(\en_{\rho',\sigma'}(\varphi(Q))\bigr)<0$,

\smallskip

\item $v_{\rho',\sigma'}(\varphi(P))>0$ and $v_{\rho',\sigma'}(\varphi(Q))>0$,

\smallskip

\item $\frac{v_{\rho',\sigma'}(\varphi(P))}{v_{\rho',\sigma'}(\varphi(Q))}= \frac{v_{\rho,\sigma}(P)} {v_{\rho,\sigma}(Q)}$,

\smallskip

\item for all $(\rho,\sigma) <(\rho'',\sigma'') < (-1,1)$ the equalities
$$
\ell_{\rho'',\sigma''}(\varphi(P))=\ell_{\rho'',\sigma''}(P)\quad\text{and}\quad \ell_{\rho'',\sigma''}(\varphi(Q)) =\ell_{\rho'',\sigma''}(Q)
$$
hold,

\smallskip

\item $\en_{\rho',\sigma'}(\varphi(P)) =\st_{\rho,\sigma}(\varphi(P)) =\Bigl(\frac{r}{l} +\frac{s\sigma}{\rho}-m_{\lambda} \frac{\sigma}{\rho}, m_{\lambda}\Bigr)$,

\smallskip

\item $\en_{\rho',\sigma'}(\varphi(Q)) =\st_{\rho,\sigma}(\varphi(Q))$ and $\en_{\rho',\sigma'}(\varphi(P)) =\frac{v_{\rho,\sigma}(P)}{v_{\rho,\sigma}(Q)}\en_{\rho',\sigma'}(\varphi(Q))$,

\smallskip

\item it is true that
$$
\qquad v_{0,1}(\en_{\rho',\sigma'}(\varphi(P))) <v_{0,1} (\en_{\rho,\sigma}(P))\quad\text{or} \quad\en_{\rho',\sigma'}(\varphi(P)) =\en_{\rho,\sigma}(P).
$$
Furthermore, in the second case $\en_{\rho,\sigma}(P)+(\sigma/\rho,-1)\in\Supp(P)$,

\smallskip

\item $v_{\rho,\sigma}(\varphi(P)) = v_{\rho,\sigma}(P)$ and $v_{\rho,\sigma}(\varphi(Q)) = v_{\rho,\sigma}(Q)$,

\smallskip

\item $[\varphi(Q),\varphi(P)]_{\rho,\sigma} = 0$,

\smallskip

\item there exists  a $(\rho,\sigma)$-homogeneous element $F\in A_1^{(l)}$, which is not a monomial, such that
$$
[P,F]_{\rho,\sigma} =\ell_{\rho,\sigma}(P)\quad\text{and}\quad v_{\rho,\sigma}(F) =\rho+\sigma.
$$
Furthermore, if $\en_{\rho,\sigma}(F) = (1,1)$, then $\st_{\rho,\sigma}(\varphi(P)) =\en_{\rho,\sigma}(P)$,

\smallskip

\end{enumerate}
Note that

\begin{itemize}

\smallskip

\item[-] if $l'=1$, then $\varphi$ induces an automorphism of $A_1$,

\smallskip

\item[-] $\rho'>0$, since $\rho> 0$ means $(\rho,\sigma)<(0,1)$, and so $(\rho',\sigma')<(\rho,\sigma)< (0,1)$ implies $\rho'>0$,

\smallskip

\item[-] $v_{\rho,\sigma}(F) =\rho+\sigma>0$ implies that $\st_{\rho,\sigma}(F)\ne (0,0)\ne\en_{\rho,\sigma}(F)$.

\end{itemize}

\end{proposition}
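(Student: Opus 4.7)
The plan is to combine three structural inputs: the decomposition of $\ell_{\rho,\sigma}(P)$ and $\ell_{\rho,\sigma}(Q)$ as powers of a common element given by Theorem~\ref{f[] en A_1^{(l)}}(2), the explicit leading-term calculation of Proposition~\ref{pr ell por automorfismos}, and the auxiliary element $F$ supplied by Theorem~\ref{central}; the new predecessor direction $(\rho',\sigma')$ is then read off using Lemma~\ref{predysucc}.

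First I would exploit hypotheses~(c),~(d),~(e) to apply Theorem~\ref{f[] en A_1^{(l)}}(2), producing a common $(\rho,\sigma)$-homogeneous $R\in L^{(l)}$ and coprime $m,n\ge 2$ with $m/n=v_{\rho,\sigma}(P)/v_{\rho,\sigma}(Q)$ such that $\ell_{\rho,\sigma}(P)=\lambda_P R^m$ and $\ell_{\rho,\sigma}(Q)=\lambda_Q R^n$. By Remark~\ref{f de un producto}, $f^{(l)}_{P,\rho,\sigma}$ and $f^{(l)}_{Q,\rho,\sigma}$ are scalar multiples of $f_R^m$ and $f_R^n$, hence the polynomials $\mathfrak{f}_P(x):=x^s f^{(l)}_{P,\rho,\sigma}(x)$ and $\mathfrak{f}_Q(x):=x^v f^{(l)}_{Q,\rho,\sigma}(x)$ share their root set with multiplicities in the ratio $m{:}n$. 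Consequently the $\lambda$ chosen to maximize $m_\lambda$ also maximizes root multiplicity in $\mathfrak{f}_Q$, the latter being $(n/m)m_\lambda$. Proposition~\ref{pr ell por automorfismos} then delivers~(5) and~(9) together with $\ell_{\rho,\sigma}(\varphi(P))=\varphi_L(\ell_{\rho,\sigma}(P))$; writing $\ell_{\rho,\sigma}(P)=x^{r/l}y^s f^{(l)}_{P,\rho,\sigma}(x^{-\sigma/\rho}y)$ via Remark~\ref{notita} and factoring the maximal power $(x^{-\sigma/\rho}y)^{m_\lambda}$ out of $\mathfrak{f}_P$ evaluated at $x^{-\sigma/\rho}y+\lambda$ yields~(6). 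The parallel computation for $Q$, combined with $\st_{\rho,\sigma}(Q)=(n/m)\st_{\rho,\sigma}(P)$ (from Proposition~\ref{pr v de un producto}(4) applied to $R^m$ and $R^n$), gives $\st_{\rho,\sigma}(\varphi(Q))=(n/m)\st_{\rho,\sigma}(\varphi(P))$; this together with~(6) for $Q$ establishes~(7), and evaluating $v_{\rho',\sigma'}$ on both sides produces~(4).

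Statement~(10) follows from $[\varphi(Q),\varphi(P)]=\varphi([Q,P])=1$, since $v_{\rho,\sigma}(1)=0<v_{\rho,\sigma}(\varphi(P))+v_{\rho,\sigma}(\varphi(Q))-(\rho+\sigma)$ by~(d) and~(9). For~(11), Theorem~\ref{central} furnishes $F$ with $v_{\rho,\sigma}(F)=\rho+\sigma$ and $[P,F]_{\rho,\sigma}=\ell_{\rho,\sigma}(P)$; to rule out $F$ being a monomial I split cases. If $F=cX^{u/l}Y^v$ with $(u/l,v)\ne(1,1)$, then Theorem~\ref{central}(1)--(2) force both $\st_{\rho,\sigma}(P)$ and $\en_{\rho,\sigma}(P)$ to be aligned with $(u/l,v)$ and hence with each other; since they lie on a common $(\rho,\sigma)$-leading line, they must coincide, contradicting $(\rho,\sigma)\in\Dir(P)$. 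If $F=cXY$, Proposition~\ref{calculo del corchete} gives coefficients $c_{i0}$ that are affine and non-constant in $i$ (as $1+\sigma/\rho>0$), so $[P,F]_{\rho,\sigma}=\ell_{\rho,\sigma}(P)$ would force all but one coefficient of $\ell_{\rho,\sigma}(P)$ to vanish, again contradicting $(\rho,\sigma)\in\Dir(P)$. The supplementary assertion under $\en_{\rho,\sigma}(F)=(1,1)$ is deduced by feeding this condition into Proposition~\ref{extremosnoalineados} to force $m_\lambda=s+\alpha$ (with $s=0$), whence~(6) reads $\st_{\rho,\sigma}(\varphi(P))=\en_{\rho,\sigma}(P)$.

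Finally, set $(\rho',\sigma'):=\Pred_{\varphi(P)}(\rho,\sigma)$. For existence I verify $\Dirinf_{\varphi(P)}(\rho,\sigma)\ne\emptyset$: the expansion $\varphi(X^{i/l}Y^j)=\sum_{k}\binom{j}{k}\lambda^k X^{i/l+k\sigma/\rho}Y^{j-k}$ combined with $v_{\rho,\sigma}(P)>0$ (which forces $\Supp(P)$ to contain points strictly below the leading line) produces a point of $\Supp(\varphi(P))$ with $v_{1,-1}$ exceeding $v_{1,-1}(\st_{\rho,\sigma}(\varphi(P)))$. Lemma~\ref{predysucc}(2) then gives $(\rho',\sigma')\in\Dir(\varphi(P))$, $(\rho',\sigma')<(\rho,\sigma)$ and $\en_{\rho',\sigma'}(\varphi(P))=\st_{\rho,\sigma}(\varphi(P))$, supplying the $P$-parts of~(1) and~(6); the analogous statement $(\rho',\sigma')\in\Dir(\varphi(Q))$ follows from the parallel argument applied to $\varphi(Q)$, since $\ell_{\rho,\sigma}(\varphi(P))$ and $\ell_{\rho,\sigma}(\varphi(Q))$ are powers of $\varphi_L(R)$ sharing corners up to the scaling by $m/n$ from~(7). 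Statement~(3) is immediate from~(4),~(c), and~(9). Statement~(8) comes from the bound $m_\lambda\le\deg\mathfrak{f}_P=s+\alpha$, with equality only when $s=0$; in that edge case $\lambda\ne 0$ (since $\mathfrak{f}_P(0)\ne 0$) and the expansion of $(x-\lambda)^\alpha$ produces a nonzero coefficient at degree $\alpha-1$, placing $\en_{\rho,\sigma}(P)+(\sigma/\rho,-1)\in\Supp(P)$. The main obstacle will be statement~(2), namely $v_{1,-1}(\st_{\rho,\sigma}(\varphi(P)))<0$: a direct computation shows $v_{1,-1}(\st_{\rho,\sigma}(\varphi(P)))-v_{1,-1}(\en_{\rho,\sigma}(P))=(s+\alpha-m_\lambda)(1+\sigma/\rho)\ge 0$, so~(f) alone is insufficient; completing the argument will require combining the maximality of $m_\lambda$ (bounded below via the common-root structure $\mathfrak{f}_P=\lambda_P\mathfrak{f}_R^m$, giving $m_\lambda\ge m$) with the numerical consequence of~(f) for both $P$ and $Q$ to exclude the boundary configuration.
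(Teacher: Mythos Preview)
Your overall architecture matches the paper's, but two load-bearing steps are not actually carried out, and your proposed fixes for them will not work.

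\textbf{The inequality $v_{1,-1}(\st_{\rho,\sigma}(\varphi(P)))<0$.} You correctly compute that this reduces to $m_\lambda\ge N/M'$ for some $M'$ depending on the geometry, and you propose to use $m_\lambda\ge m$ coming from $\mathfrak f_P=\lambda_P\mathfrak f_R^{\,m}$. But there is no a priori relation between $m$ and the quantity you must beat. The paper's mechanism is entirely different: Corollary~\ref{pavadass}(1) (a consequence of Proposition~\ref{pr conmutadormantienealC}) bounds the number of distinct linear factors of $f^{(l)}_{P,\rho,\sigma}$ by $\deg f^{(l)}_{F,\rho,\sigma}$, hence $\#\factors(\mathfrak f)\le M$ where $M=v_{0,1}(\en_{\rho,\sigma}(F))$; pigeonhole then gives $m_\lambda\ge N/M$. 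One still has to turn $N/M$ into the needed bound, and this uses the alignment $\en_{\rho,\sigma}(F)\sim\en_{\rho,\sigma}(P)$ (Theorem~\ref{central}(2)) to rewrite $N/M$ as $\frac{k\rho}{l'(\rho+\sigma)}$, after which a direct computation gives $v_{1,-1}(\st_{\rho,\sigma}(\varphi(P)))\le 0$. Strictness is a separate step: it comes from applying Theorem~\ref{central}(3) to $\varphi(P),\varphi(Q)$, which forbids $\st_{\rho,\sigma}(\varphi(P))\sim(1,1)$. The case $\en_{\rho,\sigma}(F)=(1,1)$ must be handled on its own (and in fact yields the supplementary clause of~(11)). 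None of this is visible in your outline.

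\textbf{Showing $\Pred_{\varphi(P)}(\rho,\sigma)=\Pred_{\varphi(Q)}(\rho,\sigma)$.} You assert that the ``parallel argument'' for $\varphi(Q)$ gives the same direction, but this is the heart of the matter and is false without further work: a priori the two predecessors could differ. The paper defines both, assumes $(\rho',\sigma')>(\bar\rho,\bar\sigma)$, and derives from $(\rho',\sigma')\notin\Dir(\varphi(Q))$ that $\st_{\rho',\sigma'}(\varphi(Q))=\en_{\rho',\sigma'}(\varphi(Q))=\st_{\rho,\sigma}(\varphi(Q))$; then, via Proposition~\ref{extremosnoalineados}, the non-alignment of $\st_{\rho',\sigma'}(\varphi(P))$ with $\st_{\rho',\sigma'}(\varphi(Q))$ forces $\st_{\rho',\sigma'}(\varphi(P))+\st_{\rho',\sigma'}(\varphi(Q))=(1,1)$, hence $v_{0,1}(\st_{\rho',\sigma'}(\varphi(Q)))\le 1$. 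This contradicts $v_{0,1}(\st_{\rho,\sigma}(\varphi(Q)))=\bar n\, v_{0,1}(\st_{\rho,\sigma}(\varphi_L(R)))\ge \bar n\ge 2$, where $\bar n\ge 2$ uses hypothesis~(e) and the lower bound $v_{0,1}(\st_{\rho,\sigma}(\varphi_L(R)))\ge 1$ requires its own argument (ruling out $\st_{\rho,\sigma}(\varphi_L(R))=(h,0)$ via statement~(2) just proved and $v_{\rho,\sigma}(R)>0$).

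Two smaller points: statement~(3) is not ``immediate from (4),~(c),~(9)'', since those only pin down the sign of the ratio; you need $[\varphi(Q),\varphi(P)]=1$ and Remark~\ref{re v de un conmutador} to exclude both being $\le 0$. And your existence argument for $\Pred_{\varphi(P)}(\rho,\sigma)$ is circular (``$v_{\rho,\sigma}(P)>0$ forces points strictly below the leading line'' is false in general); the paper instead takes $(\rho',\sigma')$ as the maximum in $\ov\Dir(\varphi(P))$ below $(\rho,\sigma)$, which always exists, and separately rules out $(\rho',\sigma')=(1,-1)$ using~(2) and Remark~\ref{re v de un conmutador}.
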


\begin{figure}[h]
\centering

\begin{tikzpicture}[scale=0.75]
\draw[step=.5cm,gray,very thin] (0,0) grid (6.3,8.3);
\draw[step=.5cm,gray,very thin] (8,0) grid (14.3,8.3);
\draw [->] (1,0) -- (4,6) -- (4.5,8);
\draw (5.2,6) node[fill=white]{$\en_{\rho,\sigma}(P)$};
\draw (5.3,3.1) node[fill=white]{$\ell_{\rho,\sigma}$};
\draw[<-, very thin] (2.8,3.5) .. controls (4,3.1) ..  (4.8,3.1);
\draw[<-, very thin] (11.3,4.4) .. controls (12.5,4.7) ..  (13.8,4.7);
\fill[gray!15!white] (0,0) -- (1,0) -- (4,6) -- (4.4375,7.75)-- (0,7.75) -- (0,0);
\draw (1.5,4.3) node{$P$};
\draw [->] (8.5,0) -- (10.5,3) -- (12,6)--  (12.5,8);
\fill[gray!15!white] (8,0) -- (8.5,0) -- (10.5,3) -- (12,6)-- (12.4375,7.75) -- (8,7.75) -- (8,0);
\draw (9.5,4.3) node{$\varphi(P)$};
\draw [->] (0,0) -- (6.4,0) node[anchor=north]{$X$};
\draw [->] (8,0) -- (14.4,0) node[anchor=north]{$X$};
\draw [->] (0,0) -- (0,8.4) node[anchor=east]{$Y$};
\draw [->] (8,0) -- (8,8.4) node[anchor=east]{$Y$};
\draw [dashed] (9,0) -- (10.5,3);
\draw[<-, very thin] (9.4,1.3) .. controls (10.25,0.7) ..  (11.2,0.7);
\draw (12,0.7) node[fill=white]{$\ell_{\rho',\sigma'}$};
\draw (13.55,3.2) node[fill=white]{$\en_{\rho',\sigma'}\bigl(\varphi(P)\bigr)$};
\draw (13.6,2.4) node[fill=white] {$=\st_{\rho,\sigma}\bigl(\varphi(P)\bigr)$};
\draw[<-, very thin] (10.6,3) .. controls (11,2.8) ..  (11.9,2.75);
\draw[<-, very thin] (12.1,6) .. controls (12.5,6.2) ..  (13,6.25);
\draw (13.8,6.7) node[fill=white]{$\en_{\rho,\sigma}(P)$};
\draw (14.4,5.9) node[fill=white] {$=\en_{\rho,\sigma}\bigl(\varphi(P)\bigr)$};
\draw (14.4,4.7) node[fill=white]{$\ell_{\rho,\sigma}$};
\draw[->] (0,0) -- (1,-0.5);
\draw (1.7,-0.45) node{$(\rho,\sigma)$};
\draw[->] (8,0) -- (9,-0.5);
\draw (9.7,-0.45) node{$(\rho,\sigma)$};
\draw[->] (8,0) -- (9.5,-1);
\draw (10.35,-1) node{$(\rho',\sigma')$};
\end{tikzpicture}

\caption{Illustration of Proposition~\ref{preparatoria}}
\end{figure}

\begin{proof} By Theorem~\ref{central} we can find a $(\rho,\sigma)$-homogeneous element $F\in A_1^{(l)}$ such that
\begin{equation}
[P,F]_{\rho,\sigma} =\ell_{\rho,\sigma}(P)\qquad\text{and}\qquad v_{\rho,\sigma}(F) =\rho+\sigma.\label{eqnue5}
\end{equation}
Let $\#\factors(f^{(l)}_{P,\rho,\sigma})$ denote the number of linear different factors of $f^{(l)}_{P,\rho,\sigma}$. We claim that
\begin{align}
1\le\#\factors(f^{(l)}_{P,\rho,\sigma})\le\deg(f^{(l)}_{F,\rho,\sigma}),\label{eq36}
\end{align}
In fact, the first is true because $(\rho,\sigma)\in\Dir(P)$, while the second one follows from statement~(1) of Corollary~\ref{pavadass}. Note that, by the very definition of $f^{(l)}_{F,\rho,\sigma}$, condition~\eqref{eq36} implies that $F$ is not a monomial.

\smallskip

By~\eqref{eq57} and the definition of $\ell_{\rho,\sigma}(P)$ there exist $b_0,\dots,b_{\gamma}\in K$ with $b_0\ne 0$ and $b_{\gamma}\ne 0$, such that
$$
\ell_{\rho,\sigma}(P) =\sum_{i=0}^{\gamma} b_i x^{\frac{r}{l}-\frac{i\sigma}{\rho}} y^{s+i},
$$
and, again by~\eqref{eq57},
\begin{equation}
\en_{\rho,\sigma}(P) =\Bigl(\frac{r}{l}-\frac{\gamma\sigma}{\rho},s+\gamma\Bigr).\label{eqnue21}
\end{equation}
By Definition~\ref{polinomio asociado f^{(l)}},
$$
\mathfrak{f}(x) =\sum_{i=0}^{\gamma} b_i x^{i+s}.
$$
Let $(M_0,M) :=\en_{\rho,\sigma}(F)$. By the second equality in~\eqref{eqnue5}
\begin{equation}
M_0 =\frac{\rho+\sigma -\sigma M}{\rho}.\label{eqnue9}
\end{equation}
We assert that
\begin{equation}
1\le\#\factors(\mathfrak{f})\le M.\label{numero de factores}
\end{equation}
The first inequality is true by~\eqref{eq36}. In order to prove the second one we begin by noting that, by statement~(1) of Theorem~\ref{central},
\begin{equation}
\st_{\rho,\sigma}(F)=(1,1)\quad\text{or}\quad\st_{\rho,\sigma}(F)\sim \st_{\rho,\sigma}(P),\label{eqnue6}
\end{equation}
and that $\st_{\rho,\sigma}(F)\ne (0,0)$ by the second equality in~\eqref{eqnue5}. Hence, if $s>0$, then $\st_{\rho,\sigma}(F)\ne (u,0)$. Consequently, by~\eqref{eq57} and~\eqref{eq36},
$$
\#\factors(\mathfrak{f})=\#\factors(f^{(l)}_{P,\rho,\sigma})+1\le \deg(f^{(l)}_{F,\rho,\sigma})+1\le M.
$$
On the other hand, if $s=0$, then again by~\eqref{eq57} and~\eqref{eq36},
$$
\#\factors(\mathfrak{f})=\#\factors(f^{(l)}_{P,\rho,\sigma})\le \deg(f^{(l)}_{F,\rho,\sigma})\le M,
$$
as desired, proving the assertion.

\smallskip

For the sake of simplicity we set $N :=\gamma+s$. Since $\deg\mathfrak{f} = N$, by~\eqref{numero de factores} there exists at least one factor $x-\lambda$ of $\mathfrak{f}$ with multiplicity $m_{\lambda}$ greater than or equal to $N/M$. We take $\lambda\in K$ such that the multiplicity of $x-\lambda$ in $\mathfrak{f}(x)$ is maximum. We have
\begin{equation}
\mathfrak{f}(x) =\sum_{i = m_{\lambda}}^N a_i (x-\lambda)^i\quad\text{with $a_i\in K$, $a_{m_{\lambda}},a_N\ne 0$ and $m_{\lambda}\ge\frac{N}{M}$}.\label{eqnue8}
\end{equation}
Note that, since $\st_{\rho,\sigma}(P) = (r/l,s)$, by the third equality in~\eqref{eq57} we have
$$
\ell_{\rho,\sigma}(P) = x^{\frac{r}{l}}y^s f^{(l)}_{P,\rho,\sigma}(x^{-\frac{\sigma} {\rho}}y)= x^{\frac{r}{l}+s\frac{\sigma}{\rho}}(x^{-\frac{\sigma}{\rho}}y)^s f^{(l)}_{P,\rho,\sigma}(x^{-\frac{\sigma} {\rho}}y)= x^{\frac{k}{l'}}\mathfrak{f}(x^{-\frac{\sigma} {\rho}}y),
$$
where $k:=\frac{rl'}{l}+\frac{l's\sigma}{\rho}$. So, by Proposition~\ref{pr ell por automorfismos},
$$
\ell_{\rho,\sigma}(\varphi(P)) =\varphi_L\bigl(\ell_{\rho,\sigma}(P)\bigr) = x^{\frac{k}{l'}}\mathfrak{f}(x^{-\frac{\sigma}{\rho}}y+\lambda) =\sum_{i=m_{\lambda}}^N a_i x^{\frac{k}{l'}} (x^{-\frac{\sigma}{\rho}}y)^i,
$$
since $\varphi_L(x^{1/l'}) = x^{1/l'}$ and $\varphi_L(x^{-\sigma/\rho}y)=x^{-\sigma/\rho}y+\lambda$. But then, by the first equality in~\eqref{eq57},
\begin{equation}\label{starting de P}
\st_{\rho,\sigma}(\varphi(P))=\left(\frac{k}{l'}-m_{\lambda}\frac{\sigma}{\rho}, m_{\lambda}\right) =\left(\frac{r}{l}+\frac{s\sigma}{\rho}-m_{\lambda} \frac{\sigma}{\rho},m_{\lambda}\right).
\end{equation}
Note also that by~\eqref{eqnue21},
\begin{equation}
\en_{\rho,\sigma}(P) =\Bigl(\frac{r}{l}-\frac{\gamma\sigma}{\rho},N\Bigr) =\Bigl(\frac{k}{l'}-\frac{N\sigma}{\rho},N\Bigr).\label{eqnue22}
\end{equation}
We claim that
\begin{equation}
v_{1,-1}(\st_{\rho,\sigma}(\varphi(P)))<0.\label{eq50}
\end{equation}
First note that by Proposition~\ref{pr ell por automorfismos} (with $l$ replaced by $l'$),
\begin{equation}
v_{\rho,\sigma}(\varphi(P)) = v_{\rho,\sigma}(P)\qquad\text{and}\qquad v_{\rho,\sigma}(\varphi(Q)) = v_{\rho,\sigma}(Q).\label{eq49}
\end{equation}
So, statement~(9) holds. Apply Theorem~\ref{central} to $\varphi(P)$, $\varphi(Q)$, $(\rho,\sigma)$ and $l'$. Statement~(3) of that theorem gives
\begin{equation}\label{vunomenosunononulo}
 v_{1,-1}(\st_{\rho,\sigma}(\varphi(P)))\ne 0.
\end{equation}
On the other hand, statement~(2) of the same theorem gives
$$
\en_{\rho,\sigma}(F) = (1,1)\quad\text{or}\quad\en_{\rho,\sigma}(F)\sim\en_{\rho,\sigma}(P).
$$
In the first case
\begin{equation}
\Supp(F)\subseteq\{(1,1),(1+\sigma/\rho,0)\},\label{eqnue7}
\end{equation}
and so $\deg(f^{(l)}_{F,\rho,\sigma})\le 1$. Hence, by~\eqref{eq36},
\begin{equation}
1\le\#\factors(f^{(l)}_{P,\rho,\sigma})\le\deg(f^{(l)}_{F,\rho,\sigma})\le 1,\label{eqnue24}
\end{equation}
and consequently in~\eqref{eqnue7} the equality holds. Thus, $\st_{\rho,\sigma}(F) = (1+\sigma/\rho,0)$, which implies that $l'=l$ and, by~\eqref{eqnue6}, also implies that $s=0$. Therefore $k=r$, $N=\gamma$ and $f^{(l)}_{P,\rho,\sigma} =\mathfrak{f}$. So, by~\eqref{eqnue24}
$$
\mathfrak{f} = a_N (x-\lambda)^N,
$$
where $a_N$ is as in~\eqref{eqnue8}. But then $m_{\lambda} = N =\gamma$ and so, by~\eqref{starting de P} and~\eqref{eqnue22},
$$
\st_{\rho,\sigma}(\varphi(P)) =\left(\frac{r}{l}-\gamma\frac{\sigma}{\rho},N\right) =\en_{\rho,\sigma}(P),
$$
which finishes the proof of statement~(11) and yields~\eqref{eq50}, since $v_{1,-1}(\en_{\rho,\sigma}(P))<0$.

\smallskip

In the second case, by~\eqref{eqnue22}
$$
(M_0,M) :=\en_{\rho,\sigma}(F)\sim\en_{\rho,\sigma}(P) = (N_0,N),
$$
where
\begin{equation}
N_0 :=\frac{r}{l}-\frac{\gamma\sigma}{\rho} =\frac{k}{l'}-\frac{N\sigma}{\rho}.\label{eqnue11}
\end{equation}
Since, by~\eqref{numero de factores}
$$
M\ge 1\quad\text{and}\quad N=\deg(\mathfrak{f})\ge\#\factors(\mathfrak{f})\ge 1,
$$
we have $\frac{N_0}{N} =\frac{M_0}{M}$. Hence, by~\eqref{eqnue9}, \eqref{eqnue8} and~\eqref{eqnue11},
$$
\frac{k\rho}{l'(\rho+\sigma)}=\frac{k/l'}{1+\sigma/\rho} =\frac{N_0+N\frac{\sigma}{\rho}}{M_0+M\frac{\sigma}{\rho}} = \frac{N(N_0/N+\sigma/\rho)} {M(M_0/M+\sigma/\rho)} = \frac{N}{M}\le m_{\lambda},
$$
which, combined with~\eqref{starting de P}, gives
$$
v_{1,-1}(\st_{\rho,\sigma}(\varphi(P)))=\frac{k}{l'}-m_{\lambda}\frac{\sigma}{\rho} -m_{\lambda} =\left(\frac{\sigma +\rho}{\rho}\right) \left(\frac{k\rho}{l'(\rho+\sigma)}-m_{\lambda}\right)\le 0.
$$
Taking into account~\eqref{vunomenosunononulo}, this yields~\eqref{eq50}, ending the proof of the claim. Now, by statement~(d) and statement~(2) of Theorem~\ref{f[] en A_1^{(l)}}, there exist relatively prime $\bar{m},\bar{n}\in\mathds{N}$, $\lambda_P,\lambda_Q\in K^{\times}$ and a $(\rho,\sigma)$-homogeneous $R\in L^{(l)}$ such that
\begin{equation}
\frac{\bar{n}}{\bar{m}} =\frac{v_{\rho,\sigma}(Q)}{v_{\rho,\sigma}(P)},\quad\ell_{\rho,\sigma}(P) =\lambda_P R^{\bar{m}}\quad\text{and}\quad\ell_{\rho,\sigma}(Q) =\lambda_Q R^{\bar{n}}.\label{eq46}
\end{equation}
Hence, again by Proposition~\ref{pr ell por automorfismos},
\begin{equation}
\ell_{\rho,\sigma}(\varphi(P))=\lambda_P\varphi_L (R)^{\bar{m}}\quad\text{and} \quad\ell_{\rho,\sigma} (\varphi(Q)) = \lambda_Q \varphi_L(R)^{\bar{n}}.\label{eq43}
\end{equation}
Consequently, by statements~(4) and (5) of Proposition~\ref{pr v de un producto},
\begin{align}
&\st_{\rho,\sigma}(\varphi(P))=\bar{m}\st_{\rho,\sigma}(\varphi_L(R)),\quad \en_{\rho,\sigma}(\varphi(P)) =\bar{m}\en_{\rho,\sigma}(\varphi_L(R))\label{eq51}
\shortintertext{and}
&\st_{\rho,\sigma}(\varphi(Q)) =\bar{n}\st_{\rho,\sigma}(\varphi_L(R)),\quad\en_{\rho,\sigma}(\varphi(Q)) =\bar{n}\en_{\rho,\sigma}(\varphi_L(R)),\label{eq52}
\end{align}
and so
\begin{equation}
\st_{\rho,\sigma}(\varphi(P)) =\frac{\bar{m}}{\bar{n}}\st_{\rho,\sigma}(\varphi(Q)) \quad\text{and}\quad \en_{\rho,\sigma}(\varphi(P)) =\frac{\bar{m}}{\bar{n}} \en_{\rho,\sigma}(\varphi(Q)).\label{eq53}
\end{equation}
We assert that
\begin{equation}
v_{0,1}(\st_{\rho,\sigma}(\varphi_L(R)))\ge 1.\label{eq47}
\end{equation}
In fact, otherwise $v_{0,1}(\st_{\rho,\sigma}(\varphi_L(R)))=0$, and so
$$
\st_{\rho,\sigma}(\varphi_L(R)) = (h,0)\quad\text{with $h\in\frac{1}{l'}\mathds{Z}$.}
$$
Then
\begin{equation}
v_{\rho,\sigma}(\varphi_L(R))=v_{\rho,\sigma}(\st_{\rho,\sigma}(\varphi_L(R)))=\rho h<0,\label{eqnue14}
\end{equation}
since $\rho>0$ and, by~\eqref{eq50} and~\eqref{eq51},
$$
h = v_{1,-1}(\st_{\rho,\sigma}(\varphi_L(R)))<0.
$$
But, by statement~(3) of Proposition~\ref{pr v de un producto}, the second equality in~\eqref{eq46}, the first equalities in~\eqref{eq49}, and that by hypothesis, $v_{\rho,\sigma}(P)>0$, we have
$$
v_{\rho,\sigma}(\varphi_L(R))=v_{\rho,\sigma}(R)>0,
$$
which contradicts~\eqref{eqnue14}. Hence inequality~\eqref{eq47} is true. Take now
\begin{align*}
& (\rho',\sigma'):=\max\{(\rho'',\sigma'')\in\ov{\Dir}(\varphi(P)):(\rho'',\sigma'')< (\rho,\sigma)\}
\shortintertext{and}
&(\bar{\rho},\bar{\sigma}):=\max\{(\rho'',\sigma'')\in\ov{\Dir}(\varphi(Q)): (\rho'',\sigma'') < (\rho,\sigma)\}.
\end{align*}
By statement~(3) of Proposition~\ref{le basico}
\begin{equation}
\en_{\rho',\sigma'}(\varphi(P)) = \st_{\rho,\sigma}(\varphi(P))\quad\text{and} \quad \en_{\bar{\rho},\bar{\sigma}}(\varphi(Q)) = \st_{\rho,\sigma}(\varphi(Q)).\label{eq48}
\end{equation}
Combining the first equality with equality~\eqref{starting de P}, we obtain statement~(6). Moreover, by the first equalities in~\eqref{eq49} and~\eqref{eq48},
$$
v_{\rho,\sigma}(\en_{\rho',\sigma'}(\varphi(P))) = v_{\rho,\sigma}(\st_{\rho,\sigma}(\varphi(P))) = v_{\rho,\sigma}(\varphi(P)) =v_{\rho,\sigma}(P)>0,
$$
where the last inequality is true by hypothesis. Consequently
\begin{equation}
\en_{\rho',\sigma'}(\varphi(P))\ne (0,0).\label{eq40}
\end{equation}
We claim that
\begin{equation}
(\rho',\sigma') = (\bar{\rho},\bar{\sigma}).
\label{eqnue3}
\end{equation}
In order to prove this we proceed by contradiction. Assume that $(\rho',\sigma')>(\bar{\rho},\bar{\sigma})$. Then
\begin{equation}
\st_{\rho,\sigma}(\varphi(Q))=\en_{\rho',\sigma'}(\varphi(Q))=\st_{\rho',\sigma'} (\varphi(Q)),\label{eq37}
\end{equation}
where the first equality follows from statement~(3) of Proposition~\ref{le basico}, and the second one, from the fact that $(\rho',\sigma')\not \in\ov{\Dir}(\varphi(Q))$. Furthermore
\begin{equation}
\en_{\rho',\sigma'}(\varphi(P))\ne\st_{\rho',\sigma'}(\varphi(P))\label{eq39}
\end{equation}
since $(\rho',\sigma')\in\Dir(\varphi(P))$. Now, by~\eqref{eq48}, \eqref{eq53} and~\eqref{eq37},
\begin{equation}
\begin{aligned}
\en_{\rho',\sigma'}(\varphi(P))& =\st_{\rho,\sigma}(\varphi(P))\\
& =\frac{\bar{m}}{\bar{n}}\st_{\rho,\sigma}(\varphi(Q))\\
& =\frac{\bar{m}}{\bar{n}}\en_{\rho',\sigma'}(\varphi(Q))\\
& =\frac{\bar{m}}{\bar{n}}\st_{\rho',\sigma'}(\varphi(Q)).
\end{aligned}\label{eq38}
\end{equation}
We assert that
\begin{equation}
\en_{\rho',\sigma'}(\varphi(P))\nsim\st_{\rho',\sigma'}(\varphi(P)).\label{eq41}
\end{equation}
Otherwise, by the inequalities in~\eqref{eq40} and ~\eqref{eq39} there exists $\mu\in K\setminus\{1\}$ such that
\begin{equation*}
\st_{\rho',\sigma'}(\varphi(P)) =\mu\en_{\rho',\sigma'}(\varphi(P)),
\end{equation*}
which implies that
\begin{equation}
v_{\rho',\sigma'}(\varphi(P)) =\mu v_{\rho',\sigma'} (\varphi(P)).\label{eq42}
\end{equation}
On the other hand, by~\eqref{eq38}
$$
v_{\rho',\sigma'}(\varphi(Q)) =\frac{\bar{n}}{\bar{m}}v_{\rho',\sigma'}(\varphi(P)),
$$
which combined with equality~\eqref{eq42}, gives
$$
v_{\rho',\sigma'}(\varphi(P))=0=v_{\rho',\sigma'}(\varphi(Q)).
$$
But this contradicts Remark~\ref{re v de un conmutador}, since $[\varphi(Q),\varphi(P)] = 1$ and $\rho'+\sigma'>0$. Hence the condition~\eqref{eq41} is satisfied. Combining this fact with~\eqref{eq38}, we obtain
$$
\st_{\rho',\sigma'}(\varphi(Q))\nsim\st_{\rho',\sigma'}(\varphi(P)).
$$
Hence $[\varphi(Q),\varphi(P)]_{\rho',\sigma'}\ne 0$, by Corollary~\ref{extremos alineados}. Then, since $[\varphi(Q),\varphi(P)] = 1$ and $\rho'>0$, it follows from statement~(1) of Proposition~\ref{extremosnoalineados} that
\begin{equation}
\st_{\rho',\sigma'}(\varphi(P))+\st_{\rho',\sigma'}(\varphi(Q))-(1,1)= \st_{\rho',\sigma'}(1)=(0,0),\label{eq44}
\end{equation}
which implies that
\begin{equation}
v_{0,1}(\st_{\rho',\sigma'}(\varphi(Q)))\in\{0,1\},\label{eq45}
\end{equation}
because the second coordinates in~\eqref{eq44} are non-negative. But, by the first equalities in~\eqref{eq52}, \eqref{eq37}, inequality~\eqref{eq37}, statement~(4) of Proposition~\ref{pr v de un producto}, and the fact that $\bar{n}>1$ by the first equality in~\eqref{eq46},
$$
v_{0,1}(\st_{\rho',\sigma'}(\varphi(Q)))= v_{0,1}(\st_{\rho,\sigma}(\varphi(Q)))= \ov n v_{0,1}(\st_{\rho,\sigma} (\varphi_L(R)))> 1,
$$
which contradicts~\eqref{eq45}. Consequently, $(\rho',\sigma') > (\bar{\rho},\bar{\sigma})$ is impossible. Similarly one can prove that $(\rho',\sigma')<(\bar{\rho},\bar{\sigma})$ is also impossible, and so~\eqref{eqnue3} is true.

\smallskip

Using~\eqref{eq53}, \eqref{eq48}, \eqref{eqnue3}, and the fact that $\bar{m}/\bar{n} = v_{\rho,\sigma}(P)/ v_{\rho,\sigma}(Q)$, we obtain
$$
\en_{\rho',\sigma'}(\varphi(Q)) =\st_{\rho,\sigma}(\varphi(Q)))
$$
and
\begin{equation}\label{eq esquinas}
\en_{\rho',\sigma'}(\varphi(P)) =\st_{\rho,\sigma}(\varphi(P)) =\frac{v_{\rho,\sigma}(P)}{v_{\rho,\sigma}(Q)} \en_{\rho',\sigma'}(\varphi(Q)),
\end{equation}
which proves statement~(7) and combined with inequality~\eqref{eq50}, also proves statement~(2). Hence $(\rho',\sigma')\ne (1,-1)$, since otherwise
$$
v_{1,-1}(\varphi(P))<0\quad\text{and}\quad v_{1,-1}(\varphi(Q))<0,
$$
which is impossible, because it contradicts Remark~\ref{re v de un conmutador}, since $[\varphi(Q),\varphi(P)] = 1$. This concludes the proof of statement~(1). Now statement~(3) follows, since by~\eqref{eq esquinas},
$$
v_{\rho',\sigma'}(\varphi(P))\le 0\Longleftrightarrow v_{\rho',\sigma'}(\varphi(Q))\le 0,
$$
and so, again by Remark~\ref{re v de un conmutador}, the falseness of statement~(3) implies
$$
v_{\rho',\sigma'}(1) = v_{\rho',\sigma'}\bigr([\varphi(Q),\varphi(P)]\bigl)\le v_{\rho',\sigma'}(\varphi(Q)) + v_{\rho',\sigma'}(\varphi(P))-(\rho'+\sigma')<0,
$$
which is impossible. Statement~(4) also follows from~\eqref{eq esquinas}, statement~(5), from  Proposition~\ref{pr ell por automorfismos} (with $l$ replaced by $l'$) and statement~(10), from statement~(9) and the facts that
$$
[\varphi(Q),\varphi(P)]= [Q,P]\qquad\text{and}\qquad [Q,P]_{\rho,\sigma} = 0.
$$
Finally we prove statement~(8). Note that by statement~(6) and~\eqref{starting de P}
\begin{equation}
\en_{\rho',\sigma'}(\varphi(P)) =\st_{\rho,\sigma}(\varphi(P))=\left(\frac{k}{l'}-m_{\lambda} \frac{\sigma}{\rho},m_{\lambda}\right),\label{eqnue23}
\end{equation}
and so by~\eqref{eqnue22},
$$
v_{0,1}(\en_{\rho',\sigma'}(\varphi(P))) = m_{\lambda}\le N = v_{0,1}(\en_{\rho,\sigma}(P)).
$$
Furthermore, if the equality holds, then by~\eqref{eqnue8}, \eqref{eqnue22} and~\eqref{eqnue23},
$$
\en_{\rho',\sigma'}(\varphi(P))=\en_{\rho,\sigma}(P)\quad\text{and}\quad x^s f^{(l)}_{P,\rho,\sigma}(x) =\mathfrak{f}(x) = a_N (x-\lambda)^N,
$$
where $a_N$ is as in~\eqref{eqnue8}. But
$$
\deg(f^{(l)}_{P,\rho,\sigma})>0\qquad\text{and}\qquad x\nmid f^{(l)}_{P,\rho,\sigma},
$$
since $(\rho,\sigma)\in\Dir(P)$ and $f^{(l)}_{P,\rho,\sigma}(0)\ne 0$. Hence, by the last equality that $\lambda\ne 0$, $s = 0$ and $\st_{\rho,\sigma}(P) = (k/l',0)$. So, by the third equality in~\eqref{eq57},
$$
\ell_{\rho,\sigma}(P) = x^{\frac{k}{l'}}\mathfrak{f}(x^{-\frac{\sigma} {\rho}}y)= a_N x^{\frac{k}{l'}}(x^{-\frac{\sigma} {\rho}}y-\lambda)^N =\sum_{i=0}^N a_N\binom{N}{i}\lambda^{N-i}x^{\frac{k}{l'}-i\frac{\sigma}{\rho}}y^i.
$$
Consequently,
$$
\Bigl(\frac{k}{l'}-\frac{(N-1)\sigma}{\rho},N-1\Bigr)\in\Supp(P),
$$
since $\lambda\ne 0$. This finishes the proof because
$$
\en_{\rho,\sigma}(P)+\Bigl(\frac{\sigma}{\rho},-1\Bigr) =\Bigl(\frac{k}{l'}-\frac{(N-1)\sigma}{\rho}, N-1\Bigr)
$$
by equality~\eqref{eqnue22}.
\end{proof}

\begin{definition}\label{def de val} Let $l\in\mathds{N}$. For each $(r,s)\in \frac{1}{l}\mathds{Z}\times\mathds{Z}\setminus\mathds{Z}(1,1)$, we define $\dir(r,s)$ to be the unique $(\rho,\sigma)\in\mathfrak{V}$ such that $v_{\rho,\sigma}(r,s)=0$.
\end{definition}

\begin{remark}\label{re val} Note that if $P\in A_1^{(l)}\setminus\{0\}$ and $(\rho,\sigma)\in\Dir(P)$, then
$$
(\rho,\sigma) =\dir\bigl(\en_{\rho,\sigma}(P) -\st_{\rho,\sigma}(P)\bigr).
$$
\end{remark}

In the following proposition we will take $(\rho,\sigma)\in\mathfrak{V}$ with $\sigma\le 0$. Note that combining this with $\rho+\sigma>0$ we obtain $\rho>0$, and so $(\rho,\sigma)\in\mathfrak{V}^0$. Note also that $\sigma\le 0$ is equivalent to $(\rho,\sigma)\le (1,0)$. Consequently if $(\rho',\sigma')\le (\rho,\sigma)$, then $(\rho',\sigma')\le (1,0)$, and so $\rho'>0$. We will use implicitly these facts.

\begin{proposition}\label{lema general} Let $P,Q\!\in\! A_1^{(l)}$ and let $(\rho,\sigma)\!\in\!\mathfrak{V}$ with $\sigma\!\le\! 0$, such that conditions~(a), (b), (c) and~(e) of Proposition~\ref{preparatoria} are satisfied. Assume that $\frac{v_{\rho,\sigma}(Q)}{v_{\rho,\sigma}(P)}\!=\!\frac nm$ with $n,m\!>\!1$ and $\gcd(n,m)\!=\!1$. Then
$$
\frac 1m\en_{\rho,\sigma}(P)\ne\left(\bar{r}-\frac 1l,\bar{r}\right),
$$
for all $\bar{r}\ge 2$.
\end{proposition}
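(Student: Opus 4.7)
I argue by contradiction: suppose $\frac{1}{m}\en_{\rho,\sigma}(P)=(\bar r-1/l,\bar r)$ for some $\bar r\ge 2$, so that $\en_{\rho,\sigma}(P)=(m\bar r-m/l,m\bar r)$ and $v_{\rho,\sigma}(P)=m\bar r(\rho+\sigma)-m\rho/l$. The first task is to check that the remaining hypotheses (d) and (f) of Proposition~\ref{preparatoria} also hold, so that it can be applied. Condition (f) for $P$ is immediate from $v_{1,-1}(\en_{\rho,\sigma}(P))=-m/l<0$; for $Q$, Proposition~\ref{extremosnoalineados}(2) applied to $[Q,P]=1$ rules out $\en_{\rho,\sigma}(P)+\en_{\rho,\sigma}(Q)=(1,1)$ since $m\bar r\ge 4$, so $\en_{\rho,\sigma}(P)\sim\en_{\rho,\sigma}(Q)$, forcing $\en_{\rho,\sigma}(Q)=n(\bar r-1/l,\bar r)$ and hence $v_{1,-1}(\en_{\rho,\sigma}(Q))=-n/l<0$. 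For condition (d): if $[P,Q]_{\rho,\sigma}\ne 0$, Remark~\ref{re v de un conmutador} gives $v_{\rho,\sigma}(P)+v_{\rho,\sigma}(Q)=\rho+\sigma$, producing the Diophantine identity $(m+n)[l\bar r(\rho+\sigma)-\rho]=l(\rho+\sigma)$, which is incompatible with $m+n\ge 5$, $\bar r\ge 2$, $\rho+\sigma\ge 1$.

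Having verified all hypotheses, I invoke Theorem~\ref{f[] en A_1^{(l)}}(2) to obtain a $(\rho,\sigma)$-homogeneous $R\in L^{(l)}$ with $\ell_{\rho,\sigma}(P)=\lambda_P R^m$ and $\en_{\rho,\sigma}(R)=(\bar r-1/l,\bar r)$, and Proposition~\ref{preparatoria}(11) to get a non-monomial $(\rho,\sigma)$-homogeneous $F\in A_1^{(l)}$ with $v_{\rho,\sigma}(F)=\rho+\sigma$ and $[P,F]_{\rho,\sigma}=\ell_{\rho,\sigma}(P)$. By Theorem~\ref{central}(2), either $\en_{\rho,\sigma}(F)=(1,1)$ or $\en_{\rho,\sigma}(F)\sim\en_{\rho,\sigma}(P)$. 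In the first subcase, since $F$ is not a monomial and every point of $\Supp(F)$ lies on the line $v_{\rho,\sigma}=\rho+\sigma$ with $v_{-1,1}\le 0$, the support must be exactly $\{(1,1),(1+\sigma/\rho,0)\}$, so $\rho\mid l$; the ``furthermore'' clause of statement~(11), combined with statement~(6) of Proposition~\ref{preparatoria}, then forces $m_\lambda=m\bar r=\deg\mathfrak f$, giving $\mathfrak f(x)=c(x-\lambda)^{m\bar r}$ with $\lambda\ne 0$, whence $q=0$ and $f_R=c'(x-\lambda)^{\bar r}$. In the second subcase, $\en_{\rho,\sigma}(F)=\lambda\en_{\rho,\sigma}(P)$ with $\lambda=(\rho+\sigma)/v_{\rho,\sigma}(P)$, and lattice integrality of $\en_{\rho,\sigma}(F)$ in $\frac{1}{l}\mathds{Z}\times\mathds{Z}$ forces $l\bar r(\rho+\sigma)-\rho$ to be a positive divisor of $\rho$, which together with $\bar r\ge 2$ severely restricts the configuration.

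To close the argument I iterate Proposition~\ref{preparatoria}: by statements~(1)--(5), the pair $(\varphi(P),\varphi(Q))$ at $(\rho',\sigma')$ again satisfies all hypotheses, with the ratio $n/m$ preserved (statement~(4)) and $\sigma'\le 0$ (from the ordering on directions). Statement~(8) gives either $v_{0,1}(\en_{\rho',\sigma'}(\varphi(P)))<m\bar r$ --- allowing an induction on $v_{0,1}(\en)$ to reduce to already-treated cases --- or $\en_{\rho',\sigma'}(\varphi(P))=\en_{\rho,\sigma}(P)$ together with an additional support point on the leading line; since $\Supp(\varphi^k(P))$ stays inside a finite region, the ``preserved-$\en$'' chain must eventually terminate and force a strict descent. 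The hardest step is orchestrating this descent cleanly: ensuring that the inductive step yields a configuration compatible with the original hypothesis structure (so that the contradiction propagates), and that the restrictive divisibility analysis in the second subcase truly rules out every parameter quadruple $(\rho,\sigma,l,\bar r)$ compatible with $\bar r\ge 2$.
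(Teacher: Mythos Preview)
Your proof of condition~(d) contains a genuine error. You claim that the identity
\[
(m+n)\bigl[l\bar r(\rho+\sigma)-\rho\bigr]=l(\rho+\sigma)
\]
is incompatible with $m+n\ge 5$, $\bar r\ge 2$, $\rho+\sigma\ge 1$, but it is not: setting $d=\gcd(l,m+n)$, the pair
\[
\rho=\frac{l\bigl((m+n)\bar r-1\bigr)}{d},\qquad \sigma=\frac{m+n}{d}-\rho
\]
satisfies the equation, has $\gcd(\rho,\sigma)=1$, $\rho+\sigma=(m+n)/d\ge 1$, and $\sigma<0$. Nothing in the hypotheses excludes this $(\rho,\sigma)$ a priori. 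The paper's argument is that the equation \emph{forces} $(\rho,\sigma)$ to be exactly this pair, whence $v_{\rho,\sigma}(P)=m/d$, $v_{\rho,\sigma}(Q)=n/d$; one then checks directly that every $(i,j)\in\frac{1}{l}\mathds{Z}\times\mathds{N}_0$ with $i>j$ satisfies $v_{\rho,\sigma}(i,j)\ge\bigl((m+n)\bar r-1\bigr)/d>\max\{m/d,n/d\}$, so $v_{1,-1}(P)\le 0$ and $v_{1,-1}(Q)\le 0$, contradicting Remark~\ref{re v de un conmutador con 1,-1}. This support estimate, not the Diophantine relation alone, is what yields the contradiction.

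Your treatment of the second subcase ($\en_{\rho,\sigma}(F)\sim\en_{\rho,\sigma}(P)$, $\ne(1,1)$) and of the descent is also incomplete. ``Severely restricts the configuration'' is not a proof: the paper carries out an explicit analysis (parallel to the one above, with the integer $\mu$ in place of $m+n$) showing that necessarily $\mu=1$, $\bar r=2$, and $\rho=l$, then pins down $\st_{\rho,\sigma}(R)=(1/\rho,0)$ and $\deg f_{R,\rho,\sigma}^{(\rho)}=2$, and finally uses Theorem~\ref{central}(3) to exclude $m_\lambda=m$ and conclude $m_\lambda=2m$. More importantly, the descent is organized differently from yours: in \emph{both} subcases the paper shows $\rho\mid l$ and $\frac{1}{m}\en_{\rho_1,\sigma_1}(\varphi(P))$ is again \emph{exactly} $(\bar r-\tfrac{1}{l},\bar r)$, so the entire hypothesis package reproduces itself with a strictly smaller direction $(\rho_1,\sigma_1)<(\rho,\sigma)$ still satisfying $\rho_1\mid l$ and $\sigma_1<0$. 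Since only finitely many such directions exist, this gives an immediate contradiction. Your proposed descent on $v_{0,1}(\en)$ via statement~(8) need not strictly decrease, and your justification for why the ``preserved-$\en$'' chain terminates is not given.
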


\begin{proof} We will assume that
\begin{equation}
\frac 1m\en_{\rho,\sigma}(P) =\left(\bar{r}-\frac 1l,\bar{r}\right),\label{pepi3}
\end{equation}
for some fixed $\bar{r}\ge 2$ and we will prove successively the following two statements:
\begin{enumerate}

\smallskip

\item $[P,Q]_{\rho,\sigma} = 0$, $v_{1,-1}(\en_{\rho,\sigma}(P)) <0$ and $v_{1,-1}(\en_{\rho,\sigma}(Q)) <0$.

\smallskip

\item $\rho|l$ and there exist
$$
\varphi\in\Aut(A_1^{(l)})\quad\text{and}\quad (\rho_1,\sigma_1)\in\mathfrak{V}\,\text{ with $(\rho_1,\sigma_1) < (\rho,\sigma)$,}
$$
such that $P_1:=\varphi(P)$, $Q_1:=\varphi(Q)$ and $(\rho_1,\sigma_1)$ satisfy conditions~(a), (b), (c) and~(e) of Proposition~\ref{preparatoria} (more precisely, these conditions are satisfied with $(P_1,Q_1)$ playing the role of $(P,Q)$ and $(\rho_1,\sigma_1)$ playing the role of $(\rho,\sigma)$). Furthermore,
$$
\qquad\qquad\frac{v_{\rho_1,\sigma_1}(P_1)}{v_{\rho_1,\sigma_1}(Q_1)}= \frac{v_{\rho,\sigma}(P)} {v_{\rho,\sigma}(Q)}\qquad\text{and}\qquad\frac 1m\en_{\rho_1,\sigma_1}(P_1)=\left(\bar{r}-\frac 1l,\bar{r}\right).
$$

\smallskip

\end{enumerate}
Statement~(2) yields an infinite, descending chain of directions $(\rho_k,\sigma_k)$, such that $\rho_k|l$. But there are only finitely many $\rho_k$'s with $\rho_k|l$. Moreover, $0<-\sigma_k<\rho_k$, so there are only finitely many $(\rho_k,\sigma_k)$ possible, which provide us with the desired contradiction.

\smallskip

We first prove statement~(1). Set $A:=\frac 1m\en_{\rho,\sigma}(P)$ and suppose $[P,Q]_{\rho,\sigma}\ne 0$. Since
$$
v_{\rho,\sigma}(P) = v_{\rho,\sigma}(mA),\quad v_{\rho,\sigma}(Q) = v_{\rho,\sigma}(nA)\quad\text{and}\quad [P,Q]=1,
$$
under this assumption we have
$$
v_{\rho,\sigma}\left(mA+nA - (1,1)\right) = v_{\rho,\sigma}(P) + v_{\rho,\sigma}(Q) - v_{\rho,\sigma}(1,1) = 0.
$$
Consequently,
\begin{equation}
v_{\rho,\sigma}(A) =\frac{\rho+\sigma}{m+n}\quad\text{and}\quad\rho \bigl(m\bar{r}l+ n\bar{r}l-m-n-l\bigr) = -\sigma\bigl(m\bar{r}l+n\bar{r}l-l\bigr),\label{pepi}
\end{equation}
where for the second equality we use assumption~\eqref{pepi3}. Let
$$
d :=\gcd(m\bar{r}l+n\bar{r}l-l,m+n-m\bar{r}l-n\bar{r}l+l) =\gcd(l,m+n).
$$
From the second equality in~\eqref{pepi}, it follows that
\begin{equation}
\rho =\frac{m\bar{r}l+n\bar{r}l-l}d\quad\text{and}\quad\sigma =\frac{m+n-m\bar{r}l-n\bar{r}l+l}{d} =\frac{m+n}{d}-\rho,\label{pepi2}
\end{equation}
and so $\rho+\sigma = (m+n)/d$. Hence, by the first equality in~\eqref{pepi},
$$
v_{\rho,\sigma}(P)=mv_{\rho,\sigma}(A)=\frac {m(\rho+\sigma)}{m+n}=\frac md\quad\text{and}\quad v_{\rho,\sigma}(Q)= nv_{\rho,\sigma}(A)=\frac nd.
$$
We will see that we are lead to
\begin{equation}
v_{1,-1}(P)\le 0\qquad\text{and}\qquad v_{1,-1}(Q)\le 0,\label{pepi1}
\end{equation}
which contradicts Remark~\ref{re v de un conmutador con 1,-1}, since $[P,Q]=1$. In order to prove~\eqref{pepi1}, it suffices to check that if $(i,j)\in\frac 1l\mathds{Z}\times\mathds{N}_0$ and $i>j$, then $v_{\rho,\sigma}(i,j)>\max\{v_{\rho,\sigma}(P), v_{\rho,\sigma}(Q)\}$. But, writing $(i,j)=(j+\frac sl,j)$ with $s\in\mathds{N}$, we obtain
\begin{align*}
v_{\rho,\sigma}(i,j)=&\rho j+\rho\frac sl+\frac{m+n}{d}j-\rho j &&\text{by~\eqref{pepi2}}\\
=&\frac{s(m\bar{r}+n\bar{r}-1)}{d}+\frac{m+n}{d}j &&\text{by~\eqref{pepi2}}\\
\ge &\frac{(m+n)\bar{r}-1}{d}\\
\ge &\frac{m+n}{d} &&\text{since $\bar{r}\ge 2$ and $m+n\ge 1$.}\\
> &\max\{m/d,n/d\}\\
= &\max\{v_{\rho,\sigma}(P), v_{\rho,\sigma}(Q)\}.
\end{align*}
This concludes the proof that $[P,Q]_{\rho,\sigma}\! =\! 0$. Now, by Corollary~\ref{extremos alineados} and the assump\-tion~\eqref{pepi3}, we have
\begin{align*}
& v_{1,-1}(\en_{\rho,\sigma}(P)) =  mv_{1,-1}\left(\bar{r}-\frac 1l,\bar{r}\right)<0
\shortintertext{and}
& v_{1,-1}(\en_{\rho,\sigma}(Q)) =\frac{n}{m}v_{1,-1}(\en_{\rho,\sigma}(P))<0,
\end{align*}
which finishes the proof of statement~(1).

\smallskip

We now prove statement~(2). By statement~(1), the hypothesis of Proposition~\ref{preparatoria} are sa\-tisfied. Let $(\rho',\sigma')$ and $\varphi$ be as in its statement. Set
$$
P_1:=\varphi(P),\quad Q_1:=\varphi(Q)\quad\text{and}\quad (\rho_1,\sigma_1):= (\rho',\sigma').
$$
By statements~(1), (3) and~(4) of Proposition~\ref{preparatoria}, we know that
$$
\frac{v_{\rho_1,\sigma_1}(P_1)}{v_{\rho_1,\sigma_1}(Q_1)}= \frac{v_{\rho,\sigma}(P)}{v_{\rho,\sigma}(Q)},
$$
and that conditions~(b), (c) and~(e) of that proposition are satisfied for $P_1$, $Q_1$ and $(\rho_1,\sigma_1)$. Moreover condition~(a) follows immediately from the fact that $\varphi$ is an algebra automorphism. It remains to prove that
\begin{equation}
\rho\mid l\quad\text{and}\quad\frac 1m\en_{\rho_1,\sigma_1}(P_1)=\left(\bar{r}- \frac 1l,\bar{r}\right).\label{pepi13}
\end{equation}
By statement~(11) of Proposition~\ref{preparatoria}, there is a $(\rho,\sigma)$-homogeneous element $F$, which is not a monomial, such that
\begin{equation}
[P,F]_{\rho,\sigma} =\ell_{\rho,\sigma}(P)\quad\text{and}\quad v_{\rho,\sigma}(F) =\rho+\sigma.\label{pepi6}
\end{equation}
By statement~(2) of Proposition~\ref{extremosnoalineados},
$$
\en_{\rho,\sigma}(F) = (1,1)\quad\text{or}\quad\en_{\rho,\sigma}(F)\sim\en_{\rho,\sigma}(P).
$$
By statements~(6) and~(11) of Proposition~\ref{preparatoria}, in  the first case we have
$$
\frac{1}{m}\en_{\rho_1,\sigma_1}(P_1) =\frac{1}{m}\st_{\rho,\sigma}(P_1) =\frac{1}{m}\en_{\rho,\sigma}(P) =\left(\bar{r}-\frac 1l,\bar{r}\right).
$$
Hence, by statement~(8) of the same proposition,
$$
\en_{\rho,\sigma}(P)+\left(\frac{\sigma}{\rho},-1\right)\in\Supp(P)\subseteq \frac{1}{l}\mathds{Z}\times\mathds{N}_0.
$$
Since $\en_{\rho,\sigma}(P)\in\frac{1}{l}\mathds{Z}\times\mathds{N}_0$, this implies that
$$
\left(\frac{\sigma}{\rho},-1\right)\in\frac{1}{l}\mathds{Z}\times\mathds{Z},
$$
and so $\rho|\sigma l$. But then $\rho|l$, since $\gcd(\rho,\sigma)=1$. This finishes the proof of condition~\eqref{pepi13} when $\en_{\rho,\sigma}(F) = (1,1)$.

\smallskip

Assume now that $\en_{\rho,\sigma}(F)\sim\en_{\rho,\sigma}(P)$. Then, since $\left(\bar{r}-\frac 1l,\bar{r}\right)$ is indivisible in $\frac 1l\mathds{Z}\times\mathds{N}_0$, we have
\begin{equation}
\en_{\rho,\sigma}(F) =\mu\frac 1m\en_{\rho,\sigma}(P) =\mu\left(\bar{r}-\frac 1l,\bar{r}\right)\quad\text{with $\mu\in\mathds{N}$.}\label{pepi5}
\end{equation}
We claim that
$$
\mu=1,\quad\bar{r}=2\quad\text{and}\quad\rho = l.
$$
By statement~(2) of Theorem~\ref{f[] en A_1^{(l)}} there exist $\lambda_P,\lambda_Q\in K^{\times}$ and a $(\rho,\sigma)$-ho\-mo\-ge\-neous polynomial $R\!\in\! L^{(l)}$,  such that
\begin{equation}
\ell_{\rho,\sigma}(P) =\lambda_P R^m\quad\text{and}\quad\ell_{\rho,\sigma}(Q) =\lambda_Q R^n.\label{pepi14}
\end{equation}
Note that $R$ is not a monomial, since $(\rho,\sigma)\in\Dir(P)$. By statement~(5) of Proposition~\ref{pr v de un producto} and the assumption~\eqref{pepi3}, we have
\begin{equation}
\en_{\rho,\sigma}(R) =\left(\bar{r}-\frac 1l,\bar{r}\right).\label{pepi8}
\end{equation}
Hence, by statement~(2) of Proposition~\ref{le basico1},
\begin{equation}
v_{1,-1}(\st_{\rho,\sigma}(R))>v_{1,-1}(\en_{\rho,\sigma}(R))=-\frac 1l.\label{pepi4}
\end{equation}
Since, by statement~(3) of Theorem~\ref{central} and statement~(4) of Proposition~\ref{pr v de un producto},
$$
v_{1,-1}(\st_{\rho,\sigma}(R)) =\frac 1m v_{1,-1}(\st_{\rho,\sigma}(P))\ne 0,
$$
from inequality~\eqref{pepi4} it follows that
\begin{equation}
v_{1,-1}(\st_{\rho,\sigma}(R))> 0.\label{pepi10}
\end{equation}
Moreover, by equality~\eqref{pepi5} and the second equality in~\eqref{pepi6}
$$
v_{\rho,\sigma}\left(\mu\left(\bar{r}-\frac 1l,\bar{r}\right)-(1,1)\right)=0,
$$
which implies that
\begin{equation}
\rho(\mu\bar{r} l -\mu -l) = -\sigma(\mu\bar{r} l - l).\label{pepi7}
\end{equation}
Let
\begin{equation}
d :=\gcd(\mu\bar{r} l -\mu -l,\mu\bar{r} l - l) =\gcd(\mu,l).\label{pepi12}
\end{equation}
By equality~\eqref{pepi7}
\begin{equation*}
\rho=\frac{\mu\bar{r} l -l}d\quad\text{and}\quad\sigma=\frac{\mu -\mu\bar{r} l + l}{d} =\frac{\mu}{d}-\rho.
\end{equation*}
Hence
\begin{equation}
\rho=\frac{\mu\bar{r} l -l}d\quad\text{and}\quad\rho+\sigma=\frac\mu d.\label{pepi9}
\end{equation}
So,
\begin{equation}
v_{\rho,\sigma}\left(j+\frac sl,j\right)=\frac{\mu j}{d}+\frac {(\mu\bar{r}-1)s}d\ge\frac{\mu\bar{r}-1}{d}\quad\text{for all $j\in\mathds{N}_0$ and $s\in\mathds{N}$.}\label{pepi11}
\end{equation}
If $\bar{r}>2$ or $\mu>1$, this yields
$$
v_{\rho,\sigma}\left(j+\frac sl,j\right)>\frac 1d = v_{\rho,\sigma}(R),
$$
where the last equality follows from~\eqref{pepi8} and~\eqref{pepi9}. Hence, no $(i,j)\in\frac 1l\mathds{Z}\times\mathds{N}$ with $i>j$ lies in the support of $R$, and so $v_{1,-1}(\st_{\rho,\sigma}(R))\le 0$, which contradicts inequality~\eqref{pepi10}. Thus, necessarily $\bar{r}=2$ and $\mu = 1$, which, by equality~\eqref{pepi12} and the first equality in~\eqref{pepi9}, implies $d=1$ and $\rho=l$. This finishes the proof of the claim. Combining this with~\eqref{pepi5} and~\eqref{pepi8}, we obtain
\begin{equation}
\en_{\rho,\sigma}(F) =\en_{\rho,\sigma}(R) =\left(2-\frac{1}{\rho},2\right).\label{pepi16}
\end{equation}
Now, by~\eqref{eq57}, there exists $\gamma\in\{1,2\}$, such that
$$
\st_{\rho,\sigma}(R) =\left(2-\frac{1}{\rho} +\frac {\gamma\sigma}{\rho},2-\gamma\right)=\left(1 +\frac {(\gamma-1)\sigma}{\rho},2-\gamma\right),
$$
where the last equality follows from the fact that, by the second equality in~\eqref{pepi9}, we have $\rho+\sigma\! =\! 1$. But the case $\gamma\!=\!1$ is impossible, since it contradicts inequality~\eqref{pepi10}. Thus, ne\-ce\-ssa\-rily
\begin{equation}
\st_{\rho,\sigma}(R) =\left(1 +\frac{\sigma}{\rho},0\right) =\left(\frac{1}{\rho},0\right).\label{pepi17}
\end{equation}
Note that from equalities~\eqref{pepi16} and~\eqref{pepi17} it follows that $\deg\bigl(f_{R,\rho,\sigma}^{(\rho)}\bigr)=2$. Hence, by Remark~\ref{f de un producto} and the first equality in~\eqref{pepi14},
$$
f_{P,\rho,\sigma}^{(\rho)} = a(x-\lambda)^{2m}\qquad\text{or}\qquad f_{P,\rho,\sigma}^{(\rho)} = a(x-\lambda)^m(x-\lambda')^m,
$$
where $a,\lambda,\lambda'\in K^{\times}$ and $\lambda'\ne\lambda$. Let $\mathfrak{f}$ be as in Proposition~\ref{preparatoria}. By statement~(4) of Proposition~\ref{pr v de un producto}, the first equality in~\eqref{pepi14} and equality~\eqref{pepi17},
$$
\st_{\rho,\sigma}(P) =\left(\frac{m}{\rho},0\right)\qquad\text{and}\qquad\mathfrak{f} = f_{P,\rho,\sigma}^{(\rho)}.
$$
Let $m_{\lambda}$ be the multiplicity of $x-\lambda$ in $\mathfrak{f}$. By statement~(6) of Proposition~\ref{preparatoria},
$$
\en_{\rho_1,\sigma_1}(P_1) =\st_{\rho,\sigma}(P_1) =\Bigl(\frac{m}{\rho}-m_{\lambda}\frac{\sigma}{\rho}, m_{\lambda}\Bigr) =\begin{cases} (m,m) &\text{if $m_{\lambda}=m$,}\\ (2m-m/\rho,2m) &\text{if $m_{\lambda}=2m$,}\end{cases}
$$
where for the computation we used that $\rho+\sigma=1$. In order to finish the proof it is enough to notice that the case $m_{\lambda}$ is impossible, since it contradicts statement~(3) of Theorem~\ref{central}.
\end{proof}

\section{Standard minimal pairs}\label{computinglowerbounds}

\setcounter{equation}{0}

The aim of this paper is to determine a lower bound for
$$
B:= \begin{cases}  \infty & \text{if DC is true,}\\ \min\bigl(\gcd(v_{1,1}(P),v_{1,1}(Q))\bigr) & \text{where $(P,Q)$ is a counterexample to DC, if DC is false.}
\end{cases}
$$
A minimal pair is a counterexample $(P,Q)$ such that $B=\gcd(v_{1,1}(P),v_{1,1}(Q))$. In this section we will prove that if $B<\infty$, then there exists a standard minimal pair as in Definition~\ref{Smp}.

\smallskip

Although it is known that a counterexample $(P,Q)$ to DC can be brought into a subrectangular shape, we have to prove that this can be done without changing $\gcd(\deg(P),\deg(Q))$, and in such a way that the pair obtained satisfies definition~\ref{Smp}.

\smallskip

First we will prove some properties of minimal pairs. It is easy to verify that $v_{1,1}(P)>1$, $v_{1,1}(Q)>1$ and none of them are in $K[X]$ or $K[Y]$.

\begin{remark}\label{a remark} Let $(\rho,\sigma)\in \mathfrak{V}^0$. If $v_{\rho,\sigma}(P),v_{\rho,\sigma}(Q)>0$ and $[P,Q]_{\rho,\sigma}=0$, then the hypothesis of statement~(2) of Theorem~\ref{f[] en A_1^{(l)}} are satisfied. Hence there exist $\lambda_P,\lambda_Q\in K^{\times}$, $m,n\in \mathds{N}$ and a $(\rho,\sigma)$-homogeneous polynomial $R\in L$, such that
\begin{equation}
\ell_{\rho,\sigma}(P)=\lambda_P R^m\qquad\text{and}\qquad \ell_{\rho,\sigma}(Q) = \lambda_Q R^n.\label{pepe0}
\end{equation}
Consequently, by statement~(2) of Proposition~\ref{pr v de un producto},
\begin{align*}
(\rho,\sigma)\in \Dir(P) & \Leftrightarrow \text{$\ell_{\rho,\sigma}(P)$ is not a monomial}\\
& \Leftrightarrow \text{$R$ is not a monomial}\\
& \Leftrightarrow \text{$\ell_{\rho,\sigma}(Q)$ is not a monomial}\\
& \Leftrightarrow (\rho,\sigma)\in \Dir(Q).
\end{align*}
Moreover by statements~(2)--(5) of Proposition~\ref{pr v de un producto},
\begin{equation}
\st_{\rho,\sigma}(P)=\frac{m}{n}\st_{\rho,\sigma}(Q),\quad\en_{\rho,\sigma}(P)= \frac{m}{n}\en_{\rho,\sigma}(Q)\quad\text{and}\quad\frac{m}{n} :=\frac{v_{\rho,\sigma}(P)}{v_{\rho,\sigma}(Q)}.\label{pepe}
\end{equation}
\end{remark}

\begin{lemma}\label{PQ similares en I} Let $\ov I:=\{(\rho,\sigma)\in\mathfrak{V} : (1,0)\le (\rho,\sigma)<(0,1)\}$. If $(P,Q)$ is a counterexample to DC, then

\begin{enumerate}

\smallskip

\item $v_{\rho,\sigma}(P),v_{\rho,\sigma}(Q)>0$ for each $(\rho,\sigma)\in\ov I$.

\smallskip

\item $[P,Q]_{\rho,\sigma}=0$ for each $(\rho,\sigma)\in\ov I$.

\smallskip

\item $\st_{\rho,\sigma}(P)= \frac{v_{\rho,\sigma}(P)}{v_{\rho,\sigma}(Q)} \st_{\rho,\sigma}(Q)$ and $\en_{\rho,\sigma}(P)=\frac{v_{\rho,\sigma}(P)} {v_{\rho,\sigma}(Q)} \en_{\rho,\sigma}(Q)$ for each $(\rho,\sigma)\in\ov I$.

\smallskip

\item For each $(\rho,\sigma)\in\ov I$, there exist $\lambda_P,\lambda_Q\in K^{\times}$, $m,n\in \mathds{N}$ and a $(\rho,\sigma)$-homogeneous polynomial $R\in L$, such that
\begin{equation*}
\ell_{\rho,\sigma}(P)=\lambda_P R^m\qquad\text{and}\qquad \ell_{\rho,\sigma}(Q) = \lambda_Q R^n.
\end{equation*}

\smallskip

\item $\Dir(P)\cap\ov I=\Dir(Q)\cap\ov I$.

\smallskip

\item $\frac{v_{1,1}(P)}{v_{1,1}(Q)} = \frac{v_{\rho,\sigma}(P)} {v_{\rho,\sigma}(Q)}$ for each $(\rho,\sigma)\in \Dir(P)\cap\ov I$.

\end{enumerate}
\end{lemma}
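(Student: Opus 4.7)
The plan is to dispatch parts~(1) and~(2) by direct computation on supports, derive~(3),~(4) and~(5) as immediate consequences of Remark~\ref{a remark}, and then obtain~(6) by proving that the ratio $v_{\rho,\sigma}(P)/v_{\rho,\sigma}(Q)$ is constant on all of $\ov{I}$. For~(1), note that $(\rho,\sigma)\in\ov{I}$ forces $\rho\ge 1$ and $\sigma\ge 0$; since $P,Q\notin K[Y]$, each of them has a monomial $X^iY^j$ with $i\ge 1$, so $v_{\rho,\sigma}(P),v_{\rho,\sigma}(Q)\ge\rho\ge 1$. For~(2), one has $[P,Q]=-1$, hence $v_{\rho,\sigma}([P,Q])=0$, and I need to show $v_{\rho,\sigma}(P)+v_{\rho,\sigma}(Q)>\rho+\sigma$. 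Three cases: if $\rho>\sigma$, the bound above gives sum $\ge 2\rho>\rho+\sigma$; if $\rho<\sigma$, then $P,Q\notin K[X]$ yields a monomial with $j\ge 1$, so $v_{\rho,\sigma}(P),v_{\rho,\sigma}(Q)\ge\sigma$ and the sum is $\ge 2\sigma>\rho+\sigma$; if $\rho=\sigma$ then $(\rho,\sigma)=(1,1)$ and $v_{1,1}(P),v_{1,1}(Q)\ge 2$, whence the sum is $\ge 4>2$. In every case $v_{\rho,\sigma}([P,Q])<v_{\rho,\sigma}(P)+v_{\rho,\sigma}(Q)-(\rho+\sigma)$, so $P$ and $Q$ are $(\rho,\sigma)$-proportional in the sense of Definition~\ref{def rho-sigma proporcionales}, giving $[P,Q]_{\rho,\sigma}=0$.

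With~(1) and~(2) available, Remark~\ref{a remark} applies at each $(\rho,\sigma)\in\ov{I}$ (taking $l=1$, so that $R\in L$), delivering simultaneously the power-of-$R$ structure~(4), the proportionalities of starting and ending points~(3), and the equivalence~(5) (since $(\rho,\sigma)\in\Dir(P)$ iff $R$ is not a monomial iff $(\rho,\sigma)\in\Dir(Q)$).

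For~(6), set $\lambda(\rho,\sigma):=v_{\rho,\sigma}(P)/v_{\rho,\sigma}(Q)$; by~(3), $\en_{\rho,\sigma}(P)=\lambda(\rho,\sigma)\,\en_{\rho,\sigma}(Q)$, and by~(1) this endpoint is nonzero. The aim is to show $\lambda$ is constant on $\ov{I}$, since then evaluating at $(1,1)\in\ov{I}$ yields the claim. For two consecutive directions $(\rho_1,\sigma_1)<(\rho_2,\sigma_2)$ in $\ov{\Dir}(P)\cap\ov{I}=\ov{\Dir}(Q)\cap\ov{I}$ (by~(5)), Proposition~\ref{le basico}(3) gives $\en_{\rho_1,\sigma_1}(P)=\st_{\rho_2,\sigma_2}(P)$ and $\en_{\rho_1,\sigma_1}(Q)=\st_{\rho_2,\sigma_2}(Q)$; feeding these into~(3) at both directions and using nonvanishing of the common vertex forces $\lambda(\rho_1,\sigma_1)=\lambda(\rho_2,\sigma_2)$. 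For any remaining $(\rho,\sigma)\in\ov{I}\setminus\Dir(P)$, Proposition~\ref{le basico}(3) similarly identifies $\en_{\rho,\sigma}(P)$ with the nearest vertex, so $\lambda(\rho,\sigma)$ equals $\lambda$ at the adjacent direction of $\Dir(P)\cap\ov{I}$. The main obstacle is precisely this constancy step, as it is where the geometric content of Proposition~\ref{le basico}, combined with~(5) to transfer the combinatorics between $P$ and $Q$, enters decisively.
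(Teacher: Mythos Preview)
Your proof is correct and follows essentially the same route as the paper: parts~(1) and~(2) by the same elementary degree estimates (your case split $\rho\lessgtr\sigma$ mirrors the paper's), parts~(3)--(5) via Remark~\ref{a remark}, and part~(6) by propagating the ratio through consecutive directions using Proposition~\ref{le basico}(3). Your treatment of~(6) is more explicit than the paper's terse one-line appeal to \eqref{pepe} and Proposition~\ref{le basico}(3), but the underlying argument is identical.
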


\begin{proof} Let $(\rho,\sigma)\in\ov I$. Since $P,Q\in A_1\setminus K[X]\cup K[Y]$ and $\rho,\sigma\ge 0$, we have $v_{\rho,\sigma}(P),v_{\rho,\sigma}(Q)>0$. This proves statement~(1). We now prove statement~(2). We must show that
\begin{equation}\label{proporcionales}
v_{\rho,\sigma}(P)+v_{\rho,\sigma}(Q)=\rho+\sigma
\end{equation}
is not possible. Note that $(\rho,\sigma)\in\ov I$ means that $\rho,\sigma\ge 0$. If $\rho=\sigma=1$, then the only possibility would be $v_{\rho,\sigma}(P)=v_{\rho,\sigma}(Q)=1$, but this is impossible. Assume $\sigma>\rho$. Since $P\notin K[x]$, there exists $(i,j)\in\Supp(P)$ with $j>0$. Then $v_{\rho,\sigma}(P)\ge\rho i+\sigma j\ge\sigma j\ge\sigma$. Hence, if equality~\eqref{proporcionales} holds, then $v_{\rho,\sigma}(Q)\le \rho < \sigma$, and so $Q\in K[x]$, which is impossible. The same argument applies to the case $\rho>\sigma$, and so statement~(2) is true. Therefore, the conditions required in the previous remark are satisfied. From this we obtain immediately statements~(3), (4) and~(5). Finally, by~\eqref{pepe}, statement~(3) of Proposition~\ref{le basico} and the fact that $\Dir(P)\cap\ov I$ is finite,
$$
\frac{v_{1,1}(P)}{v_{1,1}(Q)}=\frac{v_{\rho,\sigma}(P)}{v_{\rho,\sigma}(Q)},
$$
which proves statement~(6).
\end{proof}

\begin{proposition}\label{no se dividen} If $(P,Q)$ is a minimal pair, then neither $v_{1,1}(P)$ divides $v_{1,1}(Q)$ nor $v_{1,1}(Q)$ divides $v_{1,1}(P)$.
\end{proposition}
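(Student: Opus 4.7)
The plan is to derive a contradiction with the minimality of $B$ by an inductive reduction. Assume, for contradiction, that $v_{1,1}(P)\mid v_{1,1}(Q)$; the case $v_{1,1}(Q)\mid v_{1,1}(P)$ reduces to this one via the symmetry $(P,Q)\mapsto (-Q,P)$, which preserves both the counterexample property (since $[P,-Q]=1$) and the unordered pair of total degrees. Set $n:=v_{1,1}(Q)/v_{1,1}(P)\in\mathds{N}$, so that $v_{1,1}(P)=B$.

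By statement~(4) of Lemma~\ref{PQ similares en I} applied to $(\rho,\sigma)=(1,1)$, there exist $\lambda_P,\lambda_Q\in K^\times$, coprime positive integers $m',n'$ with $m'/n'=v_{1,1}(P)/v_{1,1}(Q)=1/n$, and a $(1,1)$-homogeneous $R\in L$ such that $\ell_{1,1}(P)=\lambda_P R^{m'}$ and $\ell_{1,1}(Q)=\lambda_Q R^{n'}$. Coprimality forces $m'=1$ and $n'=n$. Set $\mu_0:=\lambda_Q/\lambda_P^n$ and $Q_1:=Q-\mu_0 P^n$. By statement~(2) of Proposition~\ref{pr v de un producto}, $\ell_{1,1}(\mu_0 P^n)=\mu_0\lambda_P^n R^n=\lambda_Q R^n=\ell_{1,1}(Q)$, so $v_{1,1}(Q_1)<v_{1,1}(Q)$. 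Moreover $[Q_1,P]=[Q,P]-\mu_0[P^n,P]=1$, and $(P,Q_1)$ is again a counterexample: if the endomorphism $X\mapsto P$, $Y\mapsto Q_1$ were an automorphism, then $P$ and $Q_1$ would generate $A_1$, and since $Q=Q_1+\mu_0 P^n$, so would $P$ and $Q$, contradicting that $(P,Q)$ is a counterexample.

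Iterate. At the $k$-th step, the pair $(P,Q_k)$ is a counterexample and, since $\ell_{1,1}(P)=\lambda_P R$ is unchanged, statement~(4) of Lemma~\ref{PQ similares en I} applied to $(P,Q_k)$ produces the same $R$ (up to a scalar) and yields $\ell_{1,1}(Q_k)=\lambda_{Q_k}R^{n_k}$ with $n_k:=v_{1,1}(Q_k)/v_{1,1}(P)$; setting $\mu_k:=\lambda_{Q_k}/\lambda_P^{n_k}$ and $Q_{k+1}:=Q_k-\mu_k P^{n_k}$ gives a new counterexample with $v_{1,1}(Q_{k+1})<v_{1,1}(Q_k)$. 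By the minimality of $B$, $\gcd(v_{1,1}(P),v_{1,1}(Q_{k+1}))\ge B=v_{1,1}(P)$ at every step, so $v_{1,1}(P)\mid v_{1,1}(Q_{k+1})$. Since $(v_{1,1}(Q_k))_k$ is a strictly decreasing sequence of nonnegative integers all divisible by $B>0$, it eventually reaches $0$; but then $Q_k\in K$, contradicting $[Q_k,P]=1$. The main technical point is the inductive preservation of the counterexample property and the reapplicability of Lemma~\ref{PQ similares en I} with the same $R$, both of which are routine verifications.
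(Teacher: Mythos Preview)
Your proof is correct and follows essentially the same reduction as the paper's own argument: apply statement~(4) of Lemma~\ref{PQ similares en I} at $(\rho,\sigma)=(1,1)$, subtract a suitable scalar multiple of a power of $P$ from $Q$ to strictly lower $v_{1,1}(Q)$, and iterate to a contradiction with the minimality of $B$. The only cosmetic difference is in the termination: the paper stops the descent the first time $v_{1,1}(P)\nmid v_{1,1}(Q_k)$ and reads off $\gcd<B$ directly, whereas you invoke minimality at each step to force divisibility and then argue by infinite descent; these are logically equivalent. Your explicit verification that $(P,Q_k)$ remains a counterexample (needed both to reapply the lemma and to invoke minimality) is a detail the paper leaves implicit, so your write-up is in fact slightly more complete on that point.
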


\begin{proof} Assume for example that $v_{1,1}(P)$ divides $v_{1,1}(Q)$. By statement~(5) of Lemma~\ref{PQ similares en I}, there exist $\lambda_P,\lambda_Q\in K^{\times}$, $m,n\in \mathds{N}$ and a $(1,1)$-homogeneous polynomial $R\in L$, such that
$$
\ell_{1,1}(P) = \lambda_P R^m\qquad\text{and}\qquad\ell_{1,1}(Q) = \lambda_Q R^n.
$$
Since $v_{1,1}(P)\mid v_{1,1}(Q)$, we have $n = k m$ for some $k\in\mathds{N}$. Hence, by statement~(2) of Pro\-position~\ref{pr v de un producto},
$$
\ell_{1,1}(Q) = \ell_{1,1}\biggl(\frac{\lambda_Q}{\lambda_P^k} P^k\biggr),
$$
and so $Q_1:= Q-\frac{\lambda_Q}{\lambda_P^k} P^k$ satisfy $v_{1,1}(Q_1)< v_{1,1}(Q)$. Moreover it is clear that $[P,Q_1] = [P,Q]=1$. Now we can construct successively $Q_2,Q_3,\dots$, such that $[P,Q_k]=1$ and $v_{1,1}(Q_k)< v_{1,1}(Q_{k-1})$, until that $v_{1,1}(P)$ does not divide $v_{1,1}(Q_k)$. Then $B_1\!:=\!\gcd(v_{1,1}(Q_k),v_{1,1}(P))\!<\!B$, which con\-tradicts the minimality of $B$. Similarly $v_{1,1}(Q)$ divides $v_{1,1}(P)$ is impossible.
\end{proof}

\begin{lemma} (Compare with~\cite{J}*{Corollary 2.4})\label{rho igual a uno} Let $P\in A_1$ and let $(\rho,\sigma)\in\Dir(P)$. Assume there exists $Q\in A_1$ such that with $[Q,P]=1$. The following assertions hold:

\begin{enumerate}

\smallskip

\item If $\sigma>\rho>0$, then $\rho=1$ and $\{\st_{\rho,\sigma}(P)\}= \Supp(\ell_{1,1}(P))= \{(v_{1,1}(P),0)\}$.

\smallskip

\item If $\rho>\sigma>0$, then $\sigma=1$ and $\{\en_{\rho,\sigma}(P)\}=\Supp(\ell_{1,1}(P))= \{(0,v_{1,1}(P))\}$.

\smallskip

\item If $\sigma>\rho>0$ or $\rho>\sigma>0$, then $f_{P,\rho,\sigma}$ is the power of a linear factor up to a constant.

\end{enumerate}

\end{lemma}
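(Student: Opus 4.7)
The plan is to apply Theorem~\ref{central} to obtain a $(\rho,\sigma)$-homogeneous $F\in A_1$ with $v_{\rho,\sigma}(F)=\rho+\sigma$ and $[P,F]_{\rho,\sigma}=\ell_{\rho,\sigma}(P)$, and then to exploit the extremely restrictive support that such an $F$ can have as an element of $A_1$. Since $\Supp(F)\subseteq\mathds{N}_0\times\mathds{N}_0$ and each $(i,j)\in\Supp(F)$ satisfies $\rho i+\sigma j=\rho+\sigma$, a short arithmetic check shows that in case~(1), where $\sigma>\rho>0$, the only candidates are $(1,1)$ and $(1+\sigma/\rho,0)$, the latter lying in $\mathds{N}_0^2$ precisely when $\rho=1$; symmetrically, in case~(2), where $\rho>\sigma>0$, the only candidates are $(1,1)$ and $(0,1+\rho/\sigma)$, the second requiring $\sigma=1$.

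Next I would rule out the possibility that $F$ is a monomial. If it were, then $f^{(1)}_{F,\rho,\sigma}$ would be a nonzero constant, contradicting Corollary~\ref{pavadass}(1), since $(\rho,\sigma)\in\Dir(P)$ forces $f^{(1)}_{P,\rho,\sigma}$ to be of positive degree and hence to share an irreducible factor with $f^{(1)}_{F,\rho,\sigma}$. Consequently, in case~(1) one has $\rho=1$ and $\Supp(F)=\{(1,1),(1+\sigma,0)\}$; applying Theorem~\ref{central}(1) and~(3) with $\st_{\rho,\sigma}(F)=(1+\sigma,0)$ then gives $\st_{\rho,\sigma}(P)\sim(1+\sigma,0)$, so $\st_{\rho,\sigma}(P)=(k,0)$ for some $k>0$. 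Case~(2) is the mirror image: $\sigma=1$, $\Supp(F)=\{(1,1),(0,1+\rho)\}$, $\en_{\rho,\sigma}(F)=(0,1+\rho)$, and Theorem~\ref{central}(2) and~(3) yield $\en_{\rho,\sigma}(P)=(0,k)$ for some $k>0$.

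The remaining equalities involving $\Supp(\ell_{1,1}(P))$ then follow by a direct estimate. In case~(1), any $(i,j)\in\Supp(P)$ satisfies $i+\sigma j\le k$, and therefore
\[
i+j=(i+\sigma j)-(\sigma-1)j\le k-(\sigma-1)j\le k,
\]
with equality forcing $j=0$ and $i=k$ since $\sigma\ge 2$. Hence $v_{1,1}(P)=k$ and $\Supp(\ell_{1,1}(P))=\{(k,0)\}$, which combined with $\st_{\rho,\sigma}(P)=(k,0)$ yields the full chain of equalities in~(1). Case~(2) is analogous using $\rho i+j\le k$ and $i+j\le k-(\rho-1)i$ with $\rho\ge 2$.

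Finally, for statement~(3), in case~(1) Corollary~\ref{pavadass}(1) shows that every irreducible factor of $f^{(1)}_{P,1,\sigma}$ must divide the linear polynomial $f^{(1)}_{F,1,\sigma}=b+ax$, so $f^{(1)}_{P,1,\sigma}$ is a constant times a power of a single linear factor; in case~(2) both $f^{(1)}_{F,\rho,1}$ and $f^{(1)}_{P,\rho,1}$ belong to $K[x^\rho]$, and Corollary~\ref{pavadass}(2) applied with $r=\rho$ yields the same conclusion after passing to the reduced polynomials $\ov f_P$ and $\ov f_F$. The main obstacle I anticipate is purely bookkeeping: correctly tracking the orientations of $\st$ and $\en$ and the non-alignment conditions from Theorem~\ref{central}(3) in each of the two symmetric cases, so that each invocation of Theorem~\ref{central} is applied to the correct side of the Newton polygon.
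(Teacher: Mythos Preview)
Your proof is correct and follows essentially the same route as the paper: obtain $F$ from Theorem~\ref{central}, bound $\Supp(F)$ using $\rho i+\sigma j=\rho+\sigma$ with $i,j\ge 0$, rule out the monomial case via Corollary~\ref{pavadass}(1), read off $\rho=1$ (resp.\ $\sigma=1$), and then use the alignment clauses of Theorem~\ref{central} together with a direct $v_{1,1}$-estimate.

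The one place where you are slightly more careful than the paper is statement~(3) in case~(2). The paper simply says ``the same reasoning applies'', but since there $\sigma=1$ and $\rho\ge 2$, the polynomials $f^{(1)}_{P,\rho,1}$ and $f^{(1)}_{F,\rho,1}$ actually lie in $K[x^{\rho}]$, and the literal conclusion one gets is $f^{(1)}_{P,\rho,1}=c(x^{\rho}-\lambda)^m$ rather than a power of a single linear factor in $x$. Your use of Corollary~\ref{pavadass}(2) with $r=\rho$ makes this explicit by passing to $\ov f_P,\ov f_F\in K[t]$, which is cleaner and matches how the result is actually used later (only the case $\sigma>\rho>0$ is invoked in Proposition~\ref{minimalessubrectangular}).
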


\begin{proof} Since $v_{\rho,\sigma}(P)>0$, by Theorem~\ref{central} there exists a $(\rho,\sigma)$-homogeneous $F\in A_1$ such that
$$
[P,F]_{\rho,\sigma}=\ell_{\rho,\sigma}(P)\quad\text{and}\quad v_{\rho,\sigma}(F)=\rho+\sigma.
$$
Hence
$$
\Supp(F)\subseteq\left\{(1,1),\left(\frac{\rho+\sigma}{\rho},0\right)\right\},
$$
since $(i,j)\in\Supp(F)$ with $j\ge 2$ leads to the contradiction
$$
v_{\rho,\sigma}(F)\ge v_{\rho,\sigma}(i,j)=i\rho+j\sigma\ge 2\sigma>\rho+\sigma.
$$
Consequently there exist $c_0,c_1\in K$, not both zero, such that
$$
F = c_0 X^{\frac{\rho+\sigma}{\rho}} + c_1 XY,
$$
and so
$$
\deg(f^{(1)}_{F,\rho,\sigma}) = \begin{cases} 1 & \text{if $c_0\ne 0$ and $c_1\ne 0$,}\\ 0 &\text{otherwise.}\end{cases}
$$
Thus, by statement~(1) of Corollary~\ref{pavadass}, the polynomial $f^{(1)}_{P,\rho,\sigma}$ has at most one irreducible factor, which is necessarily linear. On the other hand, since $(\rho,\sigma)\in\Dir(P)$, the polynomial $f^{(1)}_{P,\rho,\sigma}$ has degree greater than zero, and so, $f_{P,\rho,\sigma}$ has exactly one irreducible factor and this factor is linear. Therefore, again by statement~(1) of Corollary~\ref{pavadass}, the polynomial $f^{(1)}_{F,\rho,\sigma}$ has degree greater that zero, and consequently $c_0\ne 0$ and $c_1\ne 0$. Hence, $\rho=1$ (because $F\in A_1$) and
$$
\st_{\rho,\sigma}(F)=\left(\frac{\rho+\sigma}{\rho},0\right)=(\sigma+1,0).
$$
By statement~(1) of Theorem~\ref{central} there exists $\lambda\in K$ such that
$$
\st_{\rho,\sigma}(P)=\lambda(1+\sigma,0).
$$
Let $(i,j)\in\Supp(P)$. Since
$$
v_{1,1}(i,j)=i+j\le i+\sigma j=v_{\rho,\sigma}(i,j)\le v_{\rho,\sigma}(\lambda +\lambda \sigma,0) =\lambda+\lambda\sigma=v_{1,1}(\lambda+\lambda\sigma,0),
$$
with equality only if $i=\lambda+\lambda\sigma$ and $j=0$, the last assertion in statement~(1) is true. The same reasoning applies to the case $\rho>\sigma>0$.
\end{proof}

Let $P\in A_1\setminus\{0\}$ and let $a,b\in\mathds{N}$. We say that $P$ is {\em subrectangular with ver\-tex $(a,b)$} if
$$
(a,b)\in\Supp(P)\subseteq \{(i,j) : 0\le i\le a\text{ and } 0\le j\le b\}.
$$

\begin{lemma}\label{subrectangularconVal} An element $P\in A_1$ is subrectangular, if and only if $P\notin K[X]\cup K[Y]$ and
$$
\Dir(P)\cap I = \emptyset, \quad \text{where}\quad I:=\{(\rho,\sigma)\in\mathfrak{V} : (1,0)< (\rho,\sigma)< (0,1)\}.
$$
\end{lemma}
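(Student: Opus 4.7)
My plan is to treat the two implications separately; the main tool for the converse is Proposition~\ref{le basico}(3).

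For the forward direction, I would start from the assumption that $P$ is subrectangular with vertex $(a,b)$. Since $a,b\ge 1$, the presence of the monomial $X^aY^b$ in $P$ forces $P\notin K[X]\cup K[Y]$. Given $(\rho,\sigma)\in I$, both $\rho$ and $\sigma$ are strictly positive, so the linear functional $(i,j)\mapsto \rho i+\sigma j$ on $\Supp(P)\subseteq[0,a]\times[0,b]$ attains its maximum only at $(a,b)$. Hence $\ell_{\rho,\sigma}(P)$ is a monomial, which gives $(\rho,\sigma)\notin\Dir(P)$.

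For the converse, I would set $a:=v_{1,0}(P)$ and $b:=v_{0,1}(P)$; the hypothesis $P\notin K[X]\cup K[Y]$ gives $a,b\ge 1$, and the task reduces to showing $(a,b)\in\Supp(P)$. The key structural observation is that $\mathfrak{V}\setminus I$ splits into the directions $\le(1,0)$ and those $\ge(0,1)$: indeed, a $(\rho,\sigma)\in\mathfrak{V}\setminus I$ has $\rho\le 0$ or $\sigma\le 0$, and the constraint $\rho+\sigma>0$ places it in exactly one of these two halves. By hypothesis $\Dir(P)\subseteq\mathfrak{V}\setminus I$, so the consecutive pair $(\rho_-,\sigma_-)<(\rho_+,\sigma_+)$ in $\ov{\Dir}(P)$ that brackets $(1,1)\in I$ must satisfy $(\rho_-,\sigma_-)\le(1,0)$ and $(\rho_+,\sigma_+)\ge(0,1)$; consequently, every direction in $I$ lies strictly between this same pair.

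Proposition~\ref{le basico}(3) will then yield $\Supp(\ell_{\rho,\sigma}(P))=\{(c,d)\}$ for a single point $(c,d)\in\Supp(P)$ that does not depend on the choice of $(\rho,\sigma)\in I$. To identify $(c,d)$ with $(a,b)$, I would specialise to $(\rho,\sigma)=(1,n)\in I$ and $(\rho,\sigma)=(n,1)\in I$ for $n\in\mathds{N}$: if some $(i,j)\in\Supp(P)$ had $j>d$ or $i>c$, then for sufficiently large $n$ the functional $i+nj$ (respectively $ni+j$) would exceed $c+nd$ (respectively $nc+d$), contradicting the uniqueness of the maximizer. This forces $d=b$ and $c=a$, so $(a,b)=(c,d)\in\Supp(P)$ and $P$ is subrectangular. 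I do not expect any real obstacle here: the whole argument is formal once the splitting of $\mathfrak{V}\setminus I$ around $I$ is noticed and Proposition~\ref{le basico}(3) is invoked.
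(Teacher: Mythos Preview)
Your argument is correct; the overall structure matches the paper's, but the details differ in both directions.

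For the forward implication, the paper works through the machinery of successors: it first shows $\en_{1,0}(P)=(a,b)$, then argues that if $\Succ_P(1,0)$ exists it must be $\ge(0,1)$ by invoking Lemma~\ref{predysucc} and Proposition~\ref{le basico1}. Your one-line observation that for $\rho,\sigma>0$ the linear functional $\rho i+\sigma j$ is uniquely maximized on the box $[0,a]\times[0,b]$ at the corner $(a,b)$ is considerably more elementary and bypasses that apparatus.

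For the converse, both proofs rest on Proposition~\ref{le basico}(3); the difference is in how the common point $(c,d)$ is identified with $(v_{1,0}(P),v_{0,1}(P))$. The paper notes that the same consecutive pair in $\ov{\Dir}(P)$ brackets not only $(1,1)$ but also the boundary directions $(1,0)$ and $(0,1)$, so one reads off $\en_{1,0}(P)=(c,d)=\st_{0,1}(P)$ and immediately $v_{1,0}(P)=c$, $v_{0,1}(P)=d$. Your limiting argument with $(1,n)$ and $(n,1)$ stays strictly inside~$I$; it is slightly less direct, but it has the virtue of sidestepping the small case distinction (implicit in the paper) of whether $(1,0)$ or $(0,1)$ themselves belong to $\Dir(P)$.
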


\begin{proof} Assume that $P\notin K[X]\cup K[Y]$ and let
$$
(\rho_1,\sigma_1)\ge (1,1)\ge (\rho_2,\sigma_2)
$$
be such that $(\rho_1,\sigma_1) > (\rho_2,\sigma_2)$ are consecutive elements of $\ov{\Dir}(P)$. If $\Dir(P)\cap I = \emptyset$, then
$$
(\rho_1,\sigma_1)\ge (1,0) > (1,1) > (0,1)\ge (\rho_2,\sigma_2).
$$
Consequently $\Supp(\ell_{1,1}(P))$ has only one element $(a,b)$ and by statement~(3) of Proposition~\ref{le basico}
$$
\en_{1,0}(P)=(a,b)=\st_{0,1}(P).
$$
Thus $v_{1,0}(P)=a$ and $v_{0,1}(P)=b$, and so $P$ is subrectangular. Assume now that $P$ is subrectangular with corner $(a,b)$. Since $a,b\in \mathds{N}$ it is clear that $P\notin K[X]\cup K[Y]$. Moreover $v_{1,0}(P) = a$ and $v_{0,1}(P) = b$. By Remark~\ref{sucesor y predecesor}, in order to prove that $\Dir(P)\cap I = \emptyset$, it suffices to show that if $(\rho,\sigma):=\Succ_P(1,0)$ exists, then $(\rho,\sigma)\ge (0,1)$. Since $(a,b)\in\Supp(\ell_{1,0}(P))$, we know that $\en_{1,0}(P) = (a,b+\gamma)$ for some $\gamma\in\mathds{Z}$. Hence,
$$
b+\gamma-a = v_{-1,1}(\en_{1,0}(P))\ge v_{-1,1}(a,b)= b-a,
$$
where the inequality follows using again that $(a,b)\in\Supp(\ell_{1,0}(P))$. This implies that $\gamma\ge 0$. On the other hand $b\ge v_{0,1}(\en_{1,0}(P)) = b+\gamma$, because $v_{0,1}(P) = b$. Consequently $\gamma = 0$, and so $\en_{1,0}(P)=(a,b)$. Thus $\st_{\rho,\sigma}(P)=(a,b)$, by statement~(1) of Lemma~\ref{predysucc}. If $(\rho,\sigma) < (0,1)$, then, by statement~(1) of Proposition~\ref{le basico1},
$$
v_{0,1}(P)\ge v_{0,1}(\en_{\rho,\sigma}(P)) > v_{0,1}(\st_{\rho,\sigma}(P)) = v_{0,1}(a,b) = b,
$$
which contradicts that $v_{0,1}(P) = b$.
\end{proof}

\begin{remark}\label{propiedad de la esquina} The proof of the previous lemma shows that for a subrectangular $P$ with corner $(a,b)$, it holds that
$$
\en_{1,0}(P)=(a,b)=\st_{0,1}(P).
$$
\end{remark}

\begin{lemma}\label{le subrectangular} Let $(P,Q)$ be a minimal pair. If $\ell_{1,1}(P) = \mu x^ay^b$ for some $a,b>0$, then $P$ is subrectangular with vertex $(a,b)$.
\end{lemma}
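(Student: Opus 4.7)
My plan is to apply Lemma~\ref{subrectangularconVal} and show that $\Dir(P)\cap I=\emptyset$, after which the location of the vertex will follow from a short $v_{1,1}$-degree count. First, since $\ell_{1,1}(P)=\mu x^ay^b$ with $a,b>0$, the element $P$ is certainly not in $K[X]\cup K[Y]$, so by Lemma~\ref{subrectangularconVal} it is enough to prove that no $(\rho,\sigma)\in\mathfrak{V}$ with $(1,0)<(\rho,\sigma)<(0,1)$ lies in $\Dir(P)$.

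Suppose toward a contradiction that such a $(\rho,\sigma)\in\Dir(P)\cap I$ exists. The case $\rho=\sigma$ forces $\rho=\sigma=1$ by coprimality, but then $(\rho,\sigma)\in\Dir(P)$ means $\#\Supp(\ell_{1,1}(P))>1$, contradicting the hypothesis that $\ell_{1,1}(P)$ is a single monomial. In the remaining two cases one has either $\sigma>\rho>0$ or $\rho>\sigma>0$, and since $(P,Q)$ is a minimal pair we have $[Q,P]=1$, so Lemma~\ref{rho igual a uno} applies. In the first case its statement~(1) yields $\Supp(\ell_{1,1}(P))=\{(v_{1,1}(P),0)\}$, which contradicts $b>0$; in the second case statement~(2) yields $\Supp(\ell_{1,1}(P))=\{(0,v_{1,1}(P))\}$, which contradicts $a>0$. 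Hence $\Dir(P)\cap I=\emptyset$, so $P$ is subrectangular with some vertex $(a',b')\in\mathds{N}^2$.

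It remains to identify $(a',b')$ with $(a,b)$. Since $P$ is subrectangular with vertex $(a',b')$, every $(i,j)\in\Supp(P)$ satisfies $i\le a'$ and $j\le b'$, while $(a',b')\in\Supp(P)$ by Remark~\ref{propiedad de la esquina}. Therefore $v_{1,1}(P)=a'+b'$. On the other hand, the hypothesis $\ell_{1,1}(P)=\mu x^ay^b$ gives $v_{1,1}(P)=a+b$ and $\Supp(\ell_{1,1}(P))=\{(a,b)\}$. Since $(a',b')\in\Supp(P)$ realizes the maximum value $a'+b'=a+b=v_{1,1}(P)$, it lies in $\Supp(\ell_{1,1}(P))=\{(a,b)\}$, so $(a',b')=(a,b)$, as required.

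The only nontrivial step is the case analysis in the second paragraph, and its work is entirely offloaded onto Lemma~\ref{rho igual a uno}; no further obstacle arises.
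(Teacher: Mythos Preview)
Your proof is correct and takes a cleaner route than the paper's. The paper argues directly that every $(i,j)\in\Supp(P)$ satisfies $j\le b$ (and handles $i\le a$ by the flip automorphism $X\mapsto Y$, $Y\mapsto -X$): it considers $(\rho,\sigma):=\Succ_P(1,1)$, shows via Lemma~\ref{rho igual a uno}(1) that $(\rho,\sigma)<(0,1)$ is impossible, and then deduces $j\le b$ from $(\rho,\sigma)\ge(0,1)$ using Proposition~\ref{le basico}. You instead invoke the characterization of Lemma~\ref{subrectangularconVal}, which reduces the problem to showing $\Dir(P)\cap I=\emptyset$; then a single appeal to both parts of Lemma~\ref{rho igual a uno} rules out every $(\rho,\sigma)\in I$ at once, without any need for $\Succ_P$ or the symmetry automorphism. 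Both arguments rest on the same key lemma, but yours exploits Lemma~\ref{subrectangularconVal} to package the geometry more efficiently.

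One cosmetic point: in your last paragraph, the fact that $(a',b')\in\Supp(P)$ is part of the \emph{definition} of subrectangular, not a consequence of Remark~\ref{propiedad de la esquina} (which instead records $\en_{1,0}(P)=(a',b')=\st_{0,1}(P)$). The citation is misplaced but the statement is of course true, so this does not affect correctness.
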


\begin{proof} Let $(i,j)\in\Supp(P)$. We only will prove that $j\le b$, since the fact that $i\le a$ follows by applying the automorphism $\varphi$ of $A_1$ defined by $\varphi(X):=Y$ and $\varphi(Y):=-X$. By the very definition of $\Dirsup$ below Remark~\ref{valuacion depende de extremos}, if $\Dirsup_P(1,1)=\emptyset$, then
$$
j-i \le v_{-1,1}(P) = b-a,
$$
and since $i+j\le v_{1,1}(P) = a+b$, we have $j\le b$, as desired. Hence we can assume that $\Dirsup_P(1,1)\ne \emptyset$. By Remark~\ref{sucesor y predecesor}, under this assumption $(\rho,\sigma):=\Succ_P(1,1)$ exists and
\begin{equation}
(\rho,\sigma) = \min\{(\rho',\sigma'):(\rho',\sigma')\in\Dir(P)\text{ and } (\rho',\sigma')>(1,1)\}.\label{e1}
\end{equation}
Then, by statement~(3) of Proposition~\ref{le basico},
\begin{equation}
\{\st_{\rho,\sigma}(P)\} = \Supp(\ell_{1,1}(P)) = \{(a,b)\}.\label{e2}
\end{equation}
We claim that $(\rho,\sigma)\ge (0,1)$. Suppose by contradiction that $(\rho,\sigma)<(0,1)$ or, equivalently, that $\rho>0$. Since $(1,1)<(\rho,\sigma)$, we also have $\sigma>\rho$. Hence, by statement~(1) of Lemma~\ref{rho igual a uno} we obtain $\st_{\rho,\sigma}(P)=(v_{1,1}(P),0)$. But this is impossible by~\eqref{e2} and the fact that $b>0$. Consequently $(\rho,\sigma)\ge (0,1)$. Now, since $(0,1)>(1,1)$, from equality~\eqref{e1} and statement~(3) of Proposition~\ref{le basico}, it follows that, if $(\rho,\sigma) > (0,1)$, then
$$
\{\st_{\rho,\sigma}(P)\} = \Supp(\ell_{0,1}(P)) = \{(a,b)\},
$$
while if $(\rho,\sigma) = (0,1)$, then $\st_{0,1}(P) = (a,b)$, by~\eqref{e2}. Hence,
$$
j = v_{0,1}(i,j) \le v_{0,1}(\st_{0,1}(P))=b,
$$
as desired.
\end{proof}

\begin{lemma}\label{esquinas} Let $P_1,P_2\in A_1\setminus\{0\}$ and let $n_i:=v_{1,1}(P_i)$. There exists $\varphi\in\Aut(A_1)$ such that $v_{1,1}\bigl(\varphi(P_i)\bigr) = n_i$ and $(n_i,0), (0,n_i)\in \Supp\bigl(\ell_{1,1}(\varphi(P_i))\bigr)$.
\end{lemma}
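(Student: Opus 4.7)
The plan is to realize $\varphi$ as a linear change of variables, i.e.\ of the form $\varphi(X)=aX+bY$, $\varphi(Y)=cX+dY$. This defines an algebra endomorphism of the free algebra, and descends to an automorphism of $A_1$ exactly when $[cX+dY,\,aX+bY]=1$, i.e.\ when $ad-bc=1$. Thus I will look for $(a,b,c,d)\in K^4$ with $ad-bc=1$.

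First I would establish the analogue of Proposition~\ref{pr ell por automorfismos} for such a $\varphi$, namely
$$
v_{1,1}(\varphi(P))=v_{1,1}(P)\qquad\text{and}\qquad \ell_{1,1}(\varphi(P))=\varphi_L(\ell_{1,1}(P))\quad\text{for all }P\in A_1\setminus\{0\}.
$$
This holds because $[Y,X]=1$ has $v_{1,1}$-degree $0$, strictly less than $v_{1,1}(YX)=2$; hence expanding $\varphi(X^iY^j)=(aX+bY)^i(cX+dY)^j$ and commuting $X$'s past $Y$'s only produces correction terms of strictly smaller $(1,1)$-degree, so its top $(1,1)$-homogeneous part equals the commutative image $(ax+by)^i(cx+dy)^j=\varphi_L(x^iy^j)$. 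Summing over the monomials of $\ell_{1,1}(P)$ and using that $\varphi_L$ is a linear automorphism of $L$ preserving the $(1,1)$-grading (so $\varphi_L(\ell_{1,1}(P))\ne 0$) gives both claims.

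Next I translate the target conclusion. Since $\ell_{1,1}(P_i)$ is homogeneous of degree $n_i$, the coefficient of $x^{n_i}$ in $\varphi_L(\ell_{1,1}(P_i))$ is precisely $\ell_{1,1}(P_i)(a,c)$, and the coefficient of $y^{n_i}$ is $\ell_{1,1}(P_i)(b,d)$. Thus it suffices to choose $(a,b,c,d)$ with $ad-bc=1$ satisfying
$$
\ell_{1,1}(P_i)(a,c)\ne 0\quad\text{and}\quad \ell_{1,1}(P_i)(b,d)\ne 0\qquad(i=1,2).
$$

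Finally, each nonzero homogeneous polynomial $\ell_{1,1}(P_i)\in K[x,y]$ vanishes on a finite union of lines through the origin of $K^2$, since $K$ is algebraically closed (hence infinite). Let $V\subset K^2$ be the union of these lines for $i=1,2$. I pick $(a,c)\in K^2\setminus V$; in particular $(a,c)\ne(0,0)$. The locus $\{(b,d)\in K^2:ad-bc=1\}$ is then a $1$-dimensional affine line not passing through the origin, so it meets each line of $V$ in at most one point. As $K$ is infinite, we may choose $(b,d)$ on this affine line off the finitely many bad points, and the resulting $\varphi$ does the job.

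The only non-routine step is the identity $\ell_{1,1}(\varphi(P))=\varphi_L(\ell_{1,1}(P))$; once that is in hand, the argument is a standard genericity argument and no serious obstacle arises.
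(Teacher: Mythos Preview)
Your proof is correct. The paper takes a slightly different but closely related route: instead of a single $SL_2(K)$ change of coordinates, it composes two shears, first $\psi^{\lambda}\colon (X,Y)\mapsto(X,Y+\lambda X)$ to force $(n_i,0)\in\Supp(\ell_{1,1})$, and then $\phi^{\mu}\colon(X,Y)\mapsto(X+\mu Y,Y)$ to force $(0,n_i)\in\Supp(\ell_{1,1})$, at each step invoking the already-proved Remark~\ref{re ell por automorfismos} for shears and choosing $\lambda,\mu$ to avoid the finitely many roots of a one-variable polynomial. Your approach folds both steps into one linear automorphism and proves the needed compatibility $\ell_{1,1}(\varphi(P))=\varphi_L(\ell_{1,1}(P))$ for all of $SL_2$ directly; this makes the obstruction visibly a finite union of lines in $K^2$ and replaces the two one-variable avoidance arguments by a single genericity argument on the affine line $ad-bc=1$. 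The paper's version has the advantage of reusing existing lemmas verbatim; yours is a bit more self-contained and conceptual. Note, incidentally, that your argument really only uses that $K$ is infinite (guaranteed already by $\ch(K)=0$), not that it is algebraically closed: the vanishing locus of a nonzero homogeneous form in two variables is contained in finitely many lines over any field.
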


\begin{proof} Let $z:=x^{-1}y$ and let $f_1$ and $f_2$ be univariate polynomials with $\deg(f_i)\le n_i$ such that
$$
\ell_{1,1}(P_i) = x^{n_i}f_i(z).
$$
Take $\lambda\in K$ such that $f_1(\lambda)f_2(\lambda)\ne 0$ and consider the automorphism $\psi_{\lambda}\colon A_1\to A_1$ defined by $\psi^{\lambda}(X):=X$ and $\psi^{\lambda}(Y):=Y+\lambda X$. By Remark~\ref{re ell por automorfismos},
$$
\ell_{1,1}\bigl(\psi^{\lambda}(P_i)\bigr)=\psi^{\lambda}_L(\ell_{1,1}(P_i))= x^{n_i}f_i(z+\lambda).
$$
Since $\lambda$ is not a root of $f_i$, the point $(n_i,0)$ is in $\Supp\bigl(\ell_{1,1}(\psi^{\lambda}(P_i)) \bigr)$. Now let $t:=y^{-1}x$ and let $g_1,g_2\in K[t]$ be polynomials such that
$$
\ell_{1,1}(\psi^{\lambda}(P_i))=y^{n_i}g_i(t).
$$
Note that $\deg(g_i) = n_i$, since $(n_i,0)\in \Supp\bigl(\ell_{1,1}(\psi^{\lambda}(P_i))\bigr)$. Take $\mu\in K$ such that $g_1(\mu)g_2(\mu)\ne 0$ and consider the automorphism $\phi^{\mu}\colon A_1\to A_1$
defined by $\phi^{\mu}(X):=X+\mu Y$ and $\phi^{\mu}(Y):=Y$. By Remark~\ref{re ell por automorfismos},
$$
\ell_{1,1}(\phi^{\mu}\circ\psi^{\lambda}(P_i)) = \phi^{\mu}_L(\ell_{1,1} (\psi^{\lambda}(P_i))) = y^{n_i}g_i(t+\mu).
$$
Let $\varphi:=\phi_{\mu}\circ\psi_{\lambda}$. Since $g_i(\mu)\ne 0$ and $\deg (g_i)=n_i$ we have $(n_i,0),(0,n_i)\in \Supp(\varphi(P_i))$.
\end{proof}

\begin{proposition} (Compare with~\cite{J}*{Corollary 2.5})\label{minimalessubrectangular} For each minimal pair $(P,Q)$ there is a minimal pair $(\tilde{P},\tilde{Q})$, with $\tilde{P}$ and $\tilde{Q}$ subrectangular, such that
$$
v_{1,1}(\tilde{Q}) = v_{1,1}(Q)\quad\text{and}\quad v_{1,1}(\tilde{P}) = v_{1,1}(P).
$$
\end{proposition}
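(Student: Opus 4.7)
The plan is to construct a $v_{1,1}$-preserving automorphism $\varphi\in\Aut(A_1)$ such that $\varphi(P)$ and $\varphi(Q)$ are both subrectangular; then $(\tilde P,\tilde Q):=(\varphi(P),\varphi(Q))$ works, since preserving $v_{1,1}(P),v_{1,1}(Q)$ preserves $B$. By Lemma~\ref{subrectangularconVal} subrectangularity amounts to $\varphi(P),\varphi(Q)\notin K[X]\cup K[Y]$ together with $\Dir(\varphi(P))\cap I=\Dir(\varphi(Q))\cap I=\emptyset$, where $I=\{(\rho,\sigma)\in\mathfrak{V}:\rho,\sigma>0\}$; by Lemma~\ref{PQ similares en I}(5) these two sets of directions coincide, so it suffices to empty $\Dir(P)\cap I$ by composing $v_{1,1}$-preserving automorphisms.

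The central step handles the direction $(1,1)$. After applying Lemma~\ref{esquinas} to place $(v_{1,1}(P),0)$ and $(0,v_{1,1}(P))$ in $\Supp(\ell_{1,1}(P))$, the factorization $\ell_{1,1}(P)=\lambda_P R^m$ from Lemma~\ref{PQ similares en I}(5), with $R=x^d g(y/x)$, forces $g(0)\ne 0$ and $\deg g=d$. Theorem~\ref{central} gives a $(1,1)$-homogeneous $F\in A_1$ with $v_{1,1}(F)=2$ and $[P,F]_{1,1}=\ell_{1,1}(P)$, so $f_{F,1,1}$ has degree at most~$2$, and Corollary~\ref{pavadass}(1) then forces $g$ to have at most two distinct roots in $\bar K$. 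In the main case of two distinct nonzero roots $\alpha_1\ne\alpha_2$ with multiplicities $e_1,e_2\ge 1$, the composition $\Phi:=\phi_{-1/(\alpha_1-\alpha_2)}\circ\psi_{\alpha_1}$ of the affine automorphisms $\psi_\lambda:Y\mapsto Y+\lambda X$ and $\phi_\mu:X\mapsto X+\mu Y$ preserves $v_{1,1}$ and sends $R$ to the monomial $C\,x^{e_2}y^{e_1}$; hence $\ell_{1,1}(\Phi(P))$ becomes a monomial with both exponents $\ge 1$, and Lemma~\ref{le subrectangular} makes $\Phi(P)$ and $\Phi(Q)$ subrectangular.

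To handle the remaining directions in $\Dir(P)\cap I$, I work from the outermost directions inward. For $(\rho,\sigma)>(1,1)$ in $\Dir(P)$, Lemma~\ref{rho igual a uno}(1) forces $\rho=1$ and $f_{P,1,\sigma}=c(x-\lambda)^k$ with $\lambda\in K^\times$ (nonzero, else $\ell_{1,\sigma}(P)$ would be a monomial, contradicting $(1,\sigma)\in\Dir(P)$); the automorphism $Y\mapsto Y+\lambda X^\sigma$ converts $\ell_{1,\sigma}$ into a monomial, killing the direction, while by Proposition~\ref{pr ell por automorfismos} it fixes $\ell_{\rho',\sigma'}$ for all $(\rho',\sigma')>(1,\sigma)$. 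Iterating from the largest direction above $(1,1)$ downward, combined with the symmetric procedure for directions below $(1,1)$ (using $X\mapsto X+\mu Y^\rho$ and Lemma~\ref{rho igual a uno}(2)), reduces the problem to eliminating at most the direction $(1,1)$ itself, which the monomialization of the previous paragraph disposes of. Care is required in the ordering of these steps since the intermediate automorphisms can alter $\ell_{\rho',\sigma'}$ at directions they do not fix, but an induction on the lexicographically extremal direction in $\Dir\cap I$, together with the preservation of $v_{1,1}$ under each step, ensures termination.

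The hard part will be the degenerate subcase of the central step in which $g$ has a single distinct root, so that $R=c(y-\alpha x)^d$: no composition of $v_{1,1}$-preserving affine automorphisms can convert such an $R$ into a monomial with both exponents positive, and the main argument above breaks down. I expect this to be handled either by ruling the case out for minimal pairs via a finer combination of Theorem~\ref{central}(3), Corollary~\ref{pavadass}(3) and Proposition~\ref{no se dividen}, or else by circumventing it: first performing the reduction along a carefully chosen auxiliary direction so that, upon returning to $(1,1)$, the polynomial $g$ is forced to acquire a second distinct root before the final collapse.
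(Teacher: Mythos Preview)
Your two-root case is essentially the paper's argument: normalize via Lemma~\ref{esquinas}, use Theorem~\ref{central} and Corollary~\ref{pavadass}(1) to bound the number of linear factors of $f^{(1)}_{P,1,1}$ by two, then in the two-factor case apply the same pair of affine automorphisms to make $\ell_{1,1}(P)$ a monomial $x^a y^b$ with $a,b>0$, after which Lemma~\ref{le subrectangular} (and Lemma~\ref{subrectangularconVal} together with Lemma~\ref{PQ similares en I}(5) for $Q$) finishes. Note that once Lemma~\ref{le subrectangular} applies, $\Dir(P)\cap I$ is already empty, so your whole paragraph about iterating over directions with $Y\mapsto Y+\lambda X^{\sigma}$ is unnecessary; worse, the assertion there that ``each step preserves $v_{1,1}$'' is false for $\sigma>1$, so that route would not work as stated.

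The genuine gap is exactly the degenerate subcase you flag: $g$ has a single root, i.e.\ $\ell_{1,1}(P)=\mu(x+\lambda y)^m$. You only speculate here, but this case must be resolved, and the paper does so by \emph{ruling it out for minimal pairs} rather than by repairing the monomialization. Apply $\varphi'(X)=X-\lambda Y$, $\varphi'(Y)=Y$ to get $\ell_{1,1}(\ddot P)=\mu x^m$; since $\ddot P\notin K[X]$ one checks $\Dirsup_{\ddot P}(1,1)\ne\emptyset$, and the successor direction $(\rho,\sigma)$ satisfies $(1,1)<(\rho,\sigma)<(0,1)$. By Lemma~\ref{rho igual a uno}(1),(3) this forces $\rho=1$, $\sigma\ge 2$, $\st_{1,\sigma}(\ddot P)=(m,0)$, and $f^{(1)}_{\ddot P,1,\sigma}=c(x-\eta)^k$ with $\eta\ne 0$. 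The automorphism $\varphi'':Y\mapsto Y+\eta X^{\sigma}$ then gives $\ell_{1,\sigma}(P_1)=\mu x^{m-k\sigma}y^k$, and since $\sigma\ge 2$ and $(m,0)\notin\Supp(P_1)$ one obtains $v_{1,1}(P_1)<m=v_{1,1}(P)$. Crucially, the ratio is preserved: by Lemma~\ref{PQ similares en I}(6) applied at $(1,1)$ and at $(1,\sigma)$, and by Remark~\ref{re ell por automorfismos} for $\varphi'$ and $\varphi''$, one gets $v_{1,1}(P_1)/v_{1,1}(Q_1)=v_{1,1}(P)/v_{1,1}(Q)$. Hence $\gcd(v_{1,1}(P_1),v_{1,1}(Q_1))<B$, contradicting minimality. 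Your guess to invoke Theorem~\ref{central}(3), Corollary~\ref{pavadass}(3) and Proposition~\ref{no se dividen} is off target: the key inputs are Lemma~\ref{rho igual a uno} and Lemma~\ref{PQ similares en I}(6).
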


\begin{proof} Let $m:=v_{1,1}(P)$ and $n:=v_{1,1}(Q)$. By Lemma~\ref{esquinas} we can assume without loss of generality that $(0,m),(m,0)\in\Supp(P)$, which implies that $(m,0) =\st_{1,1}(P)$ and $(0,m) = \en_{1,1}(P)$. By Theorem~\ref{central} there exists a $(1,1)$-homogeneous element $F\in A_1$ such that $[P,F]_{1,1} = \ell_{1,1}(P)$ and $v_{1,1}(F)=2$. Consequently,
$$
\Supp(F)\subseteq\{(0,2),(1,1),(2,0)\}.
$$
From this it follows that $\deg\bigl(f^{(1)}_{F,1,1}\bigr)=1$ or $2$. So, by statement~(1) of Corollary~\ref{pavadass} the polynomial $f^{(1)}_{P,1,1}$ is divisible by at most two irreducible factors. Assume that $f^{(1)}_{P,1,1}$ has two different irreducible factors. Then by~\eqref{eq57} there exist $m_1,m_2\in\mathds{N}$ and $\mu,\lambda_1,\lambda_2\in K^{\times}$ with $\lambda_1\ne\lambda_2$
such that
$$
\ell_{1,1}(P) =\mu x^{m} (z-\lambda_1)^{m_1}(z-\lambda_2)^{m_2}\quad\text{where $z:=x^{-1}y$.}
$$
Furthermore $m_1+m_2=m$, because $(0,m)\in\Supp(\ell_{1,1}(P))$, and so,
$$
\ell_{1,1}(P) = \mu (y-\lambda_1x)^{m_1}(y-\lambda_2x)^{m_2}.
$$
Let $\psi',\psi''\in\Aut(A_1)$ defined by
$$
\psi'(X):=X,\quad\psi'(Y):=Y+\lambda_1 X,\quad\psi''(X):=X+\frac{1}{\lambda_2 -\lambda_1} Y \quad\text{and}\quad\psi''(Y):=Y.
$$
Consider the map $\psi:=\psi''\circ\psi'$ and take $\tilde{P}:=\psi(P)$ and $\tilde{Q}:=\psi(Q)$. A direct computation shows that $\psi_L:=\psi''_L\circ\psi'_L$. So, by Remark~\ref{re ell por automorfismos},
$$
v_{1,1}(\tilde{P})=v_{1,1}(P),\quad v_{1,1}(\tilde{Q})=v_{1,1}(Q)\quad\text{and}\quad \ell_{1,1}(\tilde{P}) = \psi_L\bigl(\ell_{1,1}(P)\bigr) = \mu (\lambda_1 -\lambda_2)^{m_2}y^{m_1}x^{m_2}.
$$
Consequently we can apply Lemma~\ref{le subrectangular} to conclude that $\tilde{P}$ is subrectangular. But, by Lemma~\ref{subrectangularconVal}, being subrectangular means precisely that $\Dir(P)\cap I = \emptyset$, where  $$
I:= \{(\rho,\sigma)\in\mathfrak{V}:(1,0)< (\rho,\sigma)< (0,1)\}.
$$
Hence, $\Dir(Q)\cap I=\emptyset$ by statement~(5) of Lemma~\ref{PQ similares en I}, and so $\tilde{Q}$ is also subrectangular, again by Lemma~\ref{subrectangularconVal}. Thus, in order to conclude the proof, it suffices to show that
\begin{equation}\label{un solo factor}
\#\factors(f^{(1)}_{P,1,1}) = 1
\end{equation}
is impossible. We will prove this showing that if equality~\eqref{un solo factor} is true, then there exists a counterexample $(P_1,Q_1)$ to DC with
\begin{equation}\label{faltaba}
\frac{v_{1,1}(P_1)}{v_{1,1}(Q_1)} = \frac{v_{1,1}(P)}{v_{1,1}(Q)} \qquad\text{and}\qquad v_{1,1}(P_1) < v_{1,1}(P),
\end{equation}
which leads to an immediate contradiction, since it implies that $\gcd(v_{1,1}(Q_1), v_{1,1}(P_1))<B$. A\-ssume that $f^{(1)}_{P,1,1}$ has only one irreducible factor. Then, by equalities~\eqref{eq57} we know that there exist $\mu,\lambda\in K^{\times}$ such that
$$
\ell_{1,1}(P) = \mu x^m (z\lambda+1)^k \quad\text{where $z:=x^{-1}y$.}
$$
Furthermore $k=m$, because $(0,m)\in\Supp(\ell_{1,1}(P))$, and so,
\begin{equation}
\ell_{1,1}(P) = \mu(x +\lambda y)^m.\label{ep1}
\end{equation}
Consider the automorphism $\varphi'$ of $A_1$ defined by
$$
\varphi'(X):=X-\lambda Y\quad\text{and}\quad\varphi'(Y):=Y,
$$
and set $\ddot{P}:=\varphi'(P)$ and $\ddot{Q}:=\varphi'(Q)$. By Remark~\ref{re ell por automorfismos},
\begin{equation}\label{cota inferior}
\ell_{1,1}(\ddot{P}) =\varphi'_L\bigl(\ell_{1,1}(P)\bigr) \quad\text{and}\quad v_{1,1}(\ddot{P})= m.
\end{equation}
On the other hand, a direct computation using~\eqref{ep1} shows that
$$
\ell_{1,1}\bigl(\ddot{P}\bigr)=\varphi'_L\bigl(\ell_{1,1}(P)\bigr) =\mu x^m ,
$$
which implies that
\begin{equation}
\Supp\bigl(\ell_{1,1}(\ddot{P})\bigr)=\{(m,0)\}.\label{starrr}
\end{equation}
Note that $\Dirsup_{\ddot{P}}(1,1)\ne \emptyset$. In fact, otherwise by the definition of $\Dirsup$ below Remark~\ref{valuacion depende de extremos},
$$
j-i = v_{-1,1}(i,j)\le v_{-1,1}(\ddot{P}) \le v_{-1,1}(\en_{1,1}(\ddot{P})) = v_{-1,1}(m,0) = -m\quad\text{for all $(i,j)\in \Supp(\ddot{P})$},
$$
but, since $i+j = v_{1,1}(i,j) \le v_{1,1}(\ddot{P}) = m$, this implies $j\le 0$ for all $(i,j)\in \Supp(\ddot{P})$, which is false. Let $(\rho,\sigma):=\Succ_{\ddot{P}}(1,1)$. By the fact that $(1,1)\notin \Dir(P)$ and Remark~\ref{sucesor y predecesor}, we know that $(\rho,\sigma)\in\Dir(P)$ and that there exists $(\rho',\sigma')\in \ov{\Dir}(P)$ such that
\begin{equation}\label{starr}
(\rho,\sigma)>(\rho',\sigma')\text{ are consecutive elements of $\ov{\Dir}(P)$ } \quad\text{and}\quad (\rho,\sigma)>(1,1)>(\rho',\sigma').
\end{equation}
So, by statement~(3) of Proposition~\ref{le basico} and the second equality in~\eqref{starrr},
\begin{equation}\label{star}
\{\st_{\rho,\sigma}(\ddot{P})\} = \Supp(\ell_{1,1}(\ddot{P})) =\{(m,0)\}.
\end{equation}
We claim that $(\rho,\sigma) < (0,1)$. In fact, if $(0,1)<(\rho,\sigma)$, then by statement~(2) of Proposition~\ref{le basico1},
$$
v_{0,1}\bigl(\en_{\rho,\sigma}(\ddot{P})\bigr)< v_{0,1}\bigl(\st_{\rho,\sigma}(\ddot{P})\bigr)= 0,
$$
which is impossible since $\ddot{P}\in A_1$; while if $(\rho,\sigma) = (0,1)$, then
$$
v_{0,1}(\ddot{P})= v_{0,1}\bigl(\st_{\rho,\sigma}(\ddot{P})\bigr) = v_{0,1}(m,0)=0,
$$
which leads to $\ddot{P}\in K[X]$, contradicting the comment above Remark~\ref{a remark} and finishing the proof of the claim.
By equality~\eqref{star} and the third equality in~\eqref{eq57}, we have
\begin{equation}\label{para despues}
\ell_{\rho,\sigma}(\ddot{P}) = \mu x^m f^{(1)}_{\ddot{P},\rho,\sigma}(z) \quad\text{where $z:=x^{-\frac{\sigma}{\rho}}y$.}
\end{equation}
Note that $(1,1)<(\rho,\sigma)<(0,1)$ means $\sigma>\rho>0$. So, by statements~(1) and~(3) of Lemma~\ref{rho igual a uno}, the polynomial $f^{(1)}_{\ddot{P},\rho, \sigma}$ has only one irreducible factor and $\rho\!=\!1$. Consequently, there exist $k\!\in\!\mathds{N}$ and $\eta\!\in\! K^{\times}$, such that
\begin{equation}\label{para despues1}
\ell_{1,\sigma}(\ddot{P})=\mu x^m(z-\eta)^k=\mu x^{m-k\sigma}(y -\eta x^\sigma)^k.
\end{equation}
Let $\varphi''\in\Aut(A_1)$ be the automorphism defined by $\varphi''(X):=X$ and $\varphi''(Y):=Y+\eta X^{\sigma}$ and set $P_1:=\varphi''(\ddot{P})$ and $Q_1:=\varphi''(\ddot{Q})$. By Remark~\ref{re ell por automorfismos}
\begin{equation}\label{para despues2}
\ell_{1,\sigma}(P_1) = \varphi''_L\bigl(\ell_{1,\sigma}(\ddot{P})\bigr) = \mu x^{m-k\sigma}y^k\qquad\text{and} \qquad v_{1,\sigma}(P_1) = v_{1,\sigma}(\ddot{P}).
\end{equation}
Hence, for each $(i,j)\in \Supp(P_1)$ we have
$$
v_{1,\sigma}(i,j)=i+j\sigma\le v_{1,\sigma}(P_1) = v_{1,\sigma}(\ddot{P}) = m,
$$
which, combined with the fact that $\sigma\ge 1$, gives
$$
v_{1,1}(i,j)=i+j\le i+j\sigma\le m= v_{1,1}(P).
$$
Since $\sigma>1$, the equality is only possible if $j=0$ and $i=m$. But $(m,0)\notin\Supp(P_1)$ since $k>0$, and so $v_{1,1}(P_1)<v_{1,1}(P)$, which is the second condition in~\eqref{faltaba}. It remains to check the first one, which follows from the series of equalities
$$
\frac{v_{1,1}(P_1)}{v_{1,1}(Q_1)}=\frac{v_{1,\sigma}(P_1)}{v_{1,\sigma}(Q_1)} =\frac{v_{1,\sigma}(\ddot{P})}{v_{1,\sigma}(\ddot{Q})}
=\frac{v_{1,1}(\ddot{P})}{v_{1,1}(\ddot{Q})}=\frac{v_{1,1}(P)}{v_{1,1}(Q)},
$$
where the first and the third one are true by statement~(6) of Lemma~\ref{PQ similares en I} and the second and fourth one, by Remark~\ref{re ell por automorfismos}.
\end{proof}

\begin{definition}\label{Smp} A minimal pair $(P,Q)$ is called a {\em standard minimal pair} if $P$ and $Q$ are sub\-rectangular and $v_{1,-1}(\st_{1,0}(P)), v_{1,-1}(\st_{1,0}(Q))<0$.
\end{definition}

\begin{corollary}\label{todos son Smp} For each minimal pair $(P,Q)$, there exists a standard minimal pair $(\tilde P,\tilde Q)$ such that
$$
v_{1,1}(\tilde{Q})=v_{1,1}(Q)\quad\text{and}\quad v_{1,1}(\tilde{P})= v_{1,1}(P).
$$
\end{corollary}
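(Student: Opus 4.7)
By Proposition~\ref{minimalessubrectangular} one may already replace $(P,Q)$ by a minimal pair $(P_1,Q_1)$ with $P_1$ and $Q_1$ subrectangular and $v_{1,1}(P_1)=v_{1,1}(P)$, $v_{1,1}(Q_1)=v_{1,1}(Q)$; what remains is to arrange the two $v_{1,-1}$-conditions of Definition~\ref{Smp}. By statement~(3) of Lemma~\ref{PQ similares en I}, one has $\st_{1,0}(P_1)=\frac{v_{1,1}(P_1)}{v_{1,1}(Q_1)}\,\st_{1,0}(Q_1)$ with a positive ratio, so the two conditions are equivalent and it suffices to secure $v_{1,-1}(\st_{1,0}(P_1))<0$.

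Since $P_1\notin K[Y]$ we have $v_{1,0}(P_1)>0$, so Theorem~\ref{central} applies in direction $(\rho,\sigma)=(1,0)$ and produces a $(1,0)$-homogeneous $F\in A_1$ with $v_{1,0}(F)=1$ and $[P_1,F]_{1,0}=\ell_{1,0}(P_1)$. The degree condition forces $F=Xg(Y)$ with $g\in K[Y]$, and statement~(1) of Corollary~\ref{pavadass} gives that $g$ is separable. Statement~(3) of Theorem~\ref{central} then yields $\st_{1,0}(P_1)\nsim (1,1)$, that is, $v_{1,-1}(\st_{1,0}(P_1))\ne 0$; it only remains to rule out the alternative $v_{1,-1}(\st_{1,0}(P_1))>0$.

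Assume this alternative holds. Statement~(1) of Theorem~\ref{central} leaves two sub-cases: either $\st_{1,0}(F)=(1,1)$, so $g(Y)=Yh(Y)$ with $h(0)\ne 0$; or $\st_{1,0}(P_1)\sim\st_{1,0}(F)$, and since $v_{1,-1}(\st_{1,0}(P_1))>0$ the only possibility is $\st_{1,0}(F)=(1,0)$, giving $g(0)\ne 0$ and $(a,0)\in\Supp(P_1)$ for $a:=v_{1,0}(P_1)$. In both sub-cases the right edge of $P_1$ descends to, or very near, the $x$-axis, and the plan is to mimic the ``one-factor'' step of the proof of Proposition~\ref{minimalessubrectangular}: invoke statement~(4) of Lemma~\ref{PQ similares en I} to write $\ell_{1,0}(P_1)=\lambda_P R^{m}$ with $R\in L$ a $(1,0)$-homogeneous polynomial, combine this with $[P_1,F]_{1,0}=\ell_{1,0}(P_1)$ via statement~(1) of Theorem~\ref{f[] en A_1^{(l)}} to pin down $R$ up to a linear change of $Y$, and finally apply an automorphism of the form $\varphi(X)=X$, $\varphi(Y)=Y+\lambda X^{\sigma}$ chosen to lift the lower part of that edge. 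The hardest step is the last: verifying that the resulting pair either already satisfies the standard condition or has strictly smaller $v_{1,1}$-degree, contradicting the minimality of~$B$.
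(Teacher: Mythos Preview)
Your proposal is not a complete proof: you explicitly leave open ``the hardest step'' and only describe a plan for it. More importantly, the plan cannot succeed as stated. You attempt to derive a contradiction from the assumption $v_{1,-1}\bigl(\st_{1,0}(P_1)\bigr)>0$, but this situation is not contradictory at all. Indeed, if $(a,b)$ denotes the corner of the subrectangular $P_1$, then $\en_{1,0}(P_1)=(a,b)$ by Remark~\ref{propiedad de la esquina}, and every point of $\Supp\bigl(\ell_{1,0}(P_1)\bigr)$ has second coordinate at most $b$. Hence whenever $a>b$ one automatically has $v_{1,-1}\bigl(\st_{1,0}(P_1)\bigr)\ge a-b>0$; nothing about minimality is violated. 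The degree-reduction idea you borrow from the one-factor branch of Proposition~\ref{minimalessubrectangular} relied on finding a direction $(1,\sigma)$ with $\sigma>1$, so that $\varphi(Y)=Y+\eta X^{\sigma}$ genuinely lowers $v_{1,1}$. At direction $(1,0)$ the corresponding automorphism is $\varphi(Y)=Y+\lambda$, and by Proposition~\ref{pr ell por automorfismos} this preserves $\ell_{1,1}$ and hence $v_{1,1}$; no contradiction with the minimality of $B$ can arise.

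The paper's argument avoids this dead end with two ingredients you are missing. First, after reaching the subrectangular pair $(P_1,Q_1)$, statement~(3) of Theorem~\ref{central} gives $a\ne b$; if $a>b$ one applies the swap $X\mapsto Y$, $Y\mapsto -X$, which preserves $v_{1,1}$ and subrectangularity, to arrange $b>a$, i.e.\ $v_{1,-1}\bigl(\en_{1,0}(P_1)\bigr)<0$. Second, if $(1,0)\notin\Dir(P_1)$ one is already done since $\st_{1,0}(P_1)=\en_{1,0}(P_1)$; otherwise all hypotheses~(a)--(f) of Proposition~\ref{preparatoria} hold at $(\rho,\sigma)=(1,0)$, and that proposition supplies directly an automorphism $\varphi$ of $A_1$ (here $l'=1$) such that $\tilde P:=\varphi(P_1)$ and $\tilde Q:=\varphi(Q_1)$ remain subrectangular (statement~(5) preserves $\ell_{\rho'',\sigma''}$ for all $(\rho'',\sigma'')\in I$, so Lemma~\ref{subrectangularconVal} applies), have unchanged $v_{1,1}$-degrees, and satisfy $v_{1,-1}\bigl(\st_{1,0}(\tilde P)\bigr)<0$ and $v_{1,-1}\bigl(\st_{1,0}(\tilde Q)\bigr)<0$ by statements~(2), (6) and~(7). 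The heavy lifting was already packaged in Proposition~\ref{preparatoria}; there is no need to reprove a cutting step by hand here.
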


\begin{proof} By Proposition~\ref{minimalessubrectangular} we know that there exists a minimal pair $(P_1,Q_1)$ with $P_1$ and $Q_1$ subrectangular such that
\begin{equation}
v_{1,1}(P_1) = v_{1,1}(P)\quad\text{and}\qquad v_{1,1}(Q_1) = v_{1,1}(Q).\label{que}
\end{equation}
Let $(a,b)$ be such that $\{(a,b)\}=\Supp(\ell_{1,1}(P_1))$. Since $v_{1,1}(P_1) = a+b>0$ it follows from statement~(3) of Theorem~\ref{central} that $a\ne b$. Applying, if necessary, the automorphism $\varphi$ of $A_1$ defined by $\varphi(X):=Y$ and $\varphi(Y):=-X$, we can suppose that $b>a$. Hence, by Remark~\ref{propiedad de la esquina}
\begin{equation}
v_{1,-1}(\en_{1,0}(P_1)) = v_{1,-1}((a,b))<0.\label{que1}
\end{equation}
Moreover, since $(1,0)\in \ov I$, it follows from statements~(1) and~(3) of Lemma~\ref{PQ similares en I} that
\begin{equation}
v_{1,-1}(\en_{1,0}(Q_1))=\frac {v_{1,0}(Q_1)}{v_{1,0}(P_1)} v_{1,-1}(\en_{1,0}(P_1))<0.\label{que2}
\end{equation}
If $(1,0)\notin\Dir(P_1)$, then by statement~(5) of Lemma~\ref{PQ similares en I}, we know that $(1,0)\notin\Dir(Q_1)$. Thus
$$
v_{1,-1}(\st_{1,0}(P_1)) = v_{1,-1}(\en_{1,0}(P_1)) < 0\quad\text{and}\quad v_{1,-1}(\st_{1,0}(Q_1)) = v_{1,-1}(\en_{1,0}(Q_1)) < 0,
$$
and so we can take $(\tilde{P},\tilde{Q})\!:=\!(P_1,Q_1)$. Hence we can assume that $(1,0)\!\in\!\Dir(P_1)$. We claim that conditions (a)--(f) of Proposition~\ref{preparatoria} are satisfied for $P_1$, $Q_1$ and $(\rho,\sigma)\!:\!=(1,0)$. In fact condition~(a) is trivial and conditions (b), (c) and~(d) follow easily from Lemma~\ref{PQ similares en I}. Now, by~\eqref{que} we know that $(P_1,Q_1)$ is a minimal pair and so condition~(e) follows from Proposition~\ref{no se dividen} and statement~(6) of Lemma~\ref{PQ similares en I}. Finally condition~(f) is~\eqref{que1} and~\eqref{que2}. Let $\varphi$ be as in Proposition~\ref{preparatoria} and let $\tilde{P}:=\varphi(P_1)$ and $\tilde{Q} := \varphi(Q_1)$. Since
$$
(1,1)\in I\quad\text{and}\quad I\subseteq \{(\rho'',\sigma''):(1,0) <(\rho'',\sigma'')<(-1,1)\},
$$
it follows from statement~(5) of Proposition~\ref{preparatoria} that
\begin{align}
&\Dir(\tilde{P})\cap I=\Dir(P_1)\cap I, &&\Dir(\tilde{Q})\cap I=\Dir(Q_1)\cap I\label{que3},\\
\shortintertext{and}
&v_{1,1}(\tilde{P}) = v_{1,1}(P_1), && v_{1,1}(\tilde{Q}) = v_{1,1}(Q_1).\label{que4}
\end{align}
From~\eqref{que3} and Lemma~\ref{subrectangularconVal} we conclude that $\tilde{P}$
and $\tilde{Q}$ are subrectangular and from~\eqref{que4} it follows that $(\tilde{P},\tilde{Q})$ is a minimal pair. By statements~(2), (6) and~(7) of Proposition~\ref{preparatoria}
$$
v_{1,-1}(\st_{1,0}(\tilde{P}))<0\quad\text{and}\quad v_{1,-1}(\st_{1,0} (\tilde{Q}))<0,
$$
and so $(\tilde{P},\tilde{Q})$ is a standard minimal pair.
\end{proof}

\section{Computing lower bounds}

\setcounter{equation}{0}

In this last section we will prove that $B>15$. For this we start from a standard minimal pair $(P,Q)$ and find in Proposition~\ref{primitivo} a direction $(\rho,\sigma)$ such that $v_{1,-1}\bigl(\st_{\rho,\sigma}(P)\bigr)>0$ and $v_{1,-1} \bigl(\en_{\rho,\sigma}(P) \bigr)<0$. The condition this imposes on the involved corners are shown in Proposition~\ref{final} and allow to prove the desired result.

In a forthcoming article we will carry over this result from Dixmier pairs to Jacobian pairs. This will improve the lower bound for the greatest common divisor of the degrees given in~\cite{N1} and~\cite{N2} which is $B>8$.

\begin{lemma}\label{corchete se anula} Let $P,Q\in A_1$ and $(\rho,\sigma)\in \ov{\mathfrak{V}}$ with $\sigma<0$. If $[Q,P]=1$ and $\en_{\rho,\sigma}(P) =\lambda \en_{\rho,\sigma}(Q)$ for some $\lambda>0$, then

\begin{enumerate}

\smallskip

\item $v_{\rho,\sigma}(P),v_{\rho,\sigma}(Q)>0$,

\smallskip

\item $[Q,P]_{\rho,\sigma} = 0$,

\smallskip

\item $(\rho,\sigma)\in \Dir(P)$ if and only if $(\rho,\sigma)\in \Dir(Q)$.

\end{enumerate}

\end{lemma}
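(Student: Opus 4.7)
The plan is to split the argument on whether $(\rho,\sigma)=(1,-1)$ or $(\rho,\sigma)\in\mathfrak{V}^0$; in the latter case $\sigma<0$ together with $\rho+\sigma>0$ forces $\rho\ge 2$, a crucial numerical input. In both cases statement~(1) will follow from commutator valuation inequalities. For $(1,-1)$ I invoke Remark~\ref{re v de un conmutador con 1,-1}, which rules out the possibility $v_{1,-1}(P),v_{1,-1}(Q)\le 0$ since it would give $v_{1,-1}([Q,P])<0$, contradicting $[Q,P]=1$; for $(\rho,\sigma)\in\mathfrak{V}^0$ the inequality $0=v_{\rho,\sigma}([Q,P])\le v_{\rho,\sigma}(P)+v_{\rho,\sigma}(Q)-(\rho+\sigma)$ from Remark~\ref{re v de un conmutador}, combined with $v_{\rho,\sigma}(P)=\lambda v_{\rho,\sigma}(Q)$ and $\lambda>0$, immediately forces both valuations to be strictly positive. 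For $(1,-1)$ statement~(2) is then automatic since $\rho+\sigma=0$ and the sum of valuations is strictly positive, while~(3) is vacuous because $(1,-1)\notin\mathfrak{V}$, so neither side lies in $\Dir(P)$ or $\Dir(Q)$.

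For~(2) in the main case I plan to argue by contradiction: if $[Q,P]_{\rho,\sigma}\ne 0$ then $[Q,P]_{\rho,\sigma}=\ell_{\rho,\sigma}(1)=1$ and the chain of valuation inequalities is sharp, giving $v_{\rho,\sigma}(P)+v_{\rho,\sigma}(Q)=\rho+\sigma$. Proposition~\ref{extremosnoalineados}(1) applied with $R=1$ produces the dichotomy
$$
\st_{\rho,\sigma}(P)\sim\st_{\rho,\sigma}(Q)\quad\text{or}\quad\st_{\rho,\sigma}(P)+\st_{\rho,\sigma}(Q)=(1,1).
$$
The second alternative leaves only four possible pairs of points in $\mathds{N}_0^2$ summing to $(1,1)$, and a direct check shows each gives either $v_{\rho,\sigma}(P)\le 0$ or $v_{\rho,\sigma}(Q)\le\sigma<0$, contradicting~(1). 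Hence $\st_{\rho,\sigma}(P)\sim\st_{\rho,\sigma}(Q)$, and the common alignment ratio must equal $\lambda=v_{\rho,\sigma}(P)/v_{\rho,\sigma}(Q)=m/n$ in lowest terms. Integrality of the supports in $\mathds{N}_0^2$ for $P,Q\in A_1$ then forces $\st_{\rho,\sigma}(Q)=(na,nb)$ and $\st_{\rho,\sigma}(P)=(ma,mb)$ with $a,b\in\mathds{N}_0$ and $a>0$ since $v_{\rho,\sigma}(Q)>0$.

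The final push uses the explicit expansion of Proposition~\ref{calculo del corchete}: every monomial of $[P,Q]_{\rho,\sigma}$ has $y$-exponent $(m+n)b+i+j-1$ for indices $i,j\ge 0$. Matching the value $-1$ forces this exponent to vanish, which excludes $b\ge 1$ (since then $(m+n)b-1\ge m+n-1\ge 1$) and, under $b=0$, restricts to $(i,j)\in\{(1,0),(0,1)\}$. But then the corresponding $x$-exponent $ma-\sigma/\rho$ or $na-\sigma/\rho$ must be a nonnegative integer, which, since $\gcd(\rho,\sigma)=1$, forces $\rho=1$, contradicting $\rho\ge 2$. This closes~(2). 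Statement~(3) then follows immediately from~(1), (2) and Theorem~\ref{f[] en A_1^{(l)}}(2), which produces a common $(\rho,\sigma)$-homogeneous $R$ with $\ell_{\rho,\sigma}(P)=\lambda_P R^m$ and $\ell_{\rho,\sigma}(Q)=\lambda_Q R^n$, so both conditions in~(3) become equivalent to ``$R$ is not a monomial''. The main obstacle I expect is precisely the last step of~(2): the target $-1$ is a rigid constant, and the delicate point is to see that the lower bound $\rho\ge 2$ coming from $\sigma<0$ is exactly what closes the integrality loophole that would otherwise permit a nontrivial bracket.
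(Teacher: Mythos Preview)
Your proof is correct. Statements~(1) and~(3), and the $(1,-1)$ case throughout, match the paper's argument. For the heart of statement~(2) when $(\rho,\sigma)\in\mathfrak V^0$, however, you take a genuinely different route. The paper does not apply Proposition~\ref{extremosnoalineados} directly to the pair $(P,Q)$; instead it introduces the auxiliary element $F:=(\Psi^{(1)})^{-1}\bigl(\ell_{\rho,\sigma}(PQ)\bigr)$, verifies $[F,P]_{\rho,\sigma}=\ell_{\rho,\sigma}(P)$ and $[F,Q]_{\rho,\sigma}=-\ell_{\rho,\sigma}(Q)$, and applies Proposition~\ref{extremosnoalineados} twice to constrain $\st_{\rho,\sigma}(F)=\st_{\rho,\sigma}(P)+\st_{\rho,\sigma}(Q)$. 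From $v_{\rho,\sigma}(F)=\rho+\sigma$, $F\in A_1$, and $\rho\ge 2$ (which rules out $\st_{\rho,\sigma}(F)=(1+\sigma/\rho,0)$) it deduces $v_{1,-1}(\st_{\rho,\sigma}(P)),\,v_{1,-1}(\st_{\rho,\sigma}(Q))<0$, and then a $(1,-1)$-valuation bound on the bracket contradicts $[Q,P]_{\rho,\sigma}=1$. You bypass $F$ entirely: once alignment of $\st_{\rho,\sigma}(P)$ and $\st_{\rho,\sigma}(Q)$ is established, you read off from the explicit formula of Proposition~\ref{calculo del corchete} that a nonzero constant term in $[P,Q]_{\rho,\sigma}$ would require $\lambda_1\ne 0$ or $\mu_1\ne 0$, i.e.\ $ma-\sigma/\rho\in\mathds Z$ or $na-\sigma/\rho\in\mathds Z$, again impossible since $\rho\ge 2$ and $\gcd(\rho,\sigma)=1$. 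Both arguments hinge on the same numerical input $\rho\ge 2$; yours is more direct and computational, while the paper's is more structural (it mirrors the fixed-point machinery of Theorem~\ref{central}) and would transfer more easily to settings where the explicit bracket formula is less convenient.

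One minor point of phrasing: when you write ``the corresponding $x$-exponent $ma-\sigma/\rho$ or $na-\sigma/\rho$'', you are referring to the $x$-coordinate of the support point of $\ell_{\rho,\sigma}(P)$ at index $i=1$ (respectively of $\ell_{\rho,\sigma}(Q)$ at $j=1$), not the $x$-exponent of the resulting monomial in $[P,Q]_{\rho,\sigma}$, which is $(m+n)a-\sigma/\rho-1$. Either reading yields the same non-integrality contradiction, so the argument stands, but making the referent explicit would help readability.
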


\begin{proof} (1)\enspace If $v_{\rho,\sigma}(P) \le 0$, then $v_{\rho,\sigma}(Q) = \lambda v_{\rho,\sigma}(P) \le 0$. When $(\rho,\sigma)=(1,-1)$, this is impossible by Remark~\ref{re v de un conmutador con 1,-1}. Assume now that $\rho+\sigma>0$. Then, by Remark~\ref{re v de un conmutador}, we have
$$
0 = v_{\rho,\sigma}([Q,P]) \le v_{\rho,\sigma}(P) + v_{\rho,\sigma}(Q) - (\rho+\sigma) < 0,
$$
which is also impossible. So $v_{\rho,\sigma}(P)>0$, and similarly $v_{\rho,\sigma}(Q) >0$.

\smallskip

\noindent (2)\enspace When $(\rho,\sigma)=(1,-1)$, it follows immediately from statement~(1) that $[Q,P]_{\rho,\sigma} = 0$. Assume now that $\rho+\sigma>0$ and suppose by contradiction that $[Q,P]_{\rho,\sigma}\ne 0$. Then $[Q,P]_{\rho,\sigma}= 1$ and so
\begin{equation}
v_{\rho,\sigma}(P) + v_{\rho,\sigma}(Q) = \rho + \sigma > 0.\label{quee}
\end{equation}
Moreover, by statement~(1),
\begin{equation}
\st_{\rho,\sigma}(P) \ne (0,0)\ne \st_{\rho,\sigma}(Q).\label{quee1}
\end{equation}
We claim that $F:= \Psi^{-1}(\ell_{\rho,\sigma}(PQ))$ satisfies
$$
[F,P]_{\rho,\sigma} = \ell_{\rho,\sigma}(P)\quad\text{and}\quad[F,Q]_{\rho,\sigma}= -\ell_{\rho,\sigma}(Q).
$$
In fact, we have
$$
v_{\rho,\sigma}([PQ,P]) = v_{\rho,\sigma}(P) =  v_{\rho,\sigma}(Q)+ 2v_{\rho,\sigma}(P) -(\rho+\sigma) = v_{\rho,\sigma}(PQ)+v_{\rho,\sigma}(P)-(\rho+\sigma),
$$
where the second equality follows from~\eqref{quee}, and the last one from statement~(3) of Proposition~\ref{pr v de un producto}. Hence $[PQ,P]_{\rho,\sigma}\ne 0$, and so
\begin{equation}
\ell_{\rho,\sigma}(P) = \ell_{\rho,\sigma}([PQ,P]) = [PQ,P]_{\rho,\sigma} = [F,P]_{\rho,\sigma},\label{quee4}
\end{equation}
where the last equality follows from Corollary~\ref{ell depende del ell}. Similarly $$
[F,Q]_{\rho,\sigma} =-\ell_{\rho,\sigma}(Q).
$$
So, by Proposition~\ref{extremosnoalineados},
\begin{equation}
\st_{\rho,\sigma}(F)=(1,1)\quad\text{or}\quad\st_{\rho,\sigma}(Q)\sim \st_{\rho,\sigma}(F) \sim \st_{\rho,\sigma}(P).\label{quee3}
\end{equation}
Moreover, by statement~(4) of Proposition~\ref{pr v de un producto},
\begin{equation}
\st_{\rho,\sigma}(F) = \st_{\rho,\sigma}(Q) + \st_{\rho,\sigma}(P).\label{quee2}
\end{equation}
If $\st_{\rho,\sigma}(F)=(1,1)$, then by~\eqref{quee1} and~\eqref{quee2}, we have
$$
\{\st_{\rho,\sigma}(P),\st_{\rho,\sigma}(Q)\}=\{(0,1),(1,0)\}
$$
which contradicts statement~(1) since $v_{\rho,\sigma}(0,1)=\sigma<0$. Thus, we can assume that the second condition in~\eqref{quee3} is satisfied. Then, by~\eqref{quee1}, there exist $\lambda_P,\lambda_Q\ge 0$ such that
\begin{equation}
\st_{\rho,\sigma}(F) = \lambda_P\st_{\rho,\sigma}(P)\quad\text{and}\quad \st_{\rho,\sigma}(F) = \lambda_Q\st_{\rho,\sigma}(Q)\label{quee5}
\end{equation}
Since, by equality~\eqref{quee4}, we have $v_{\rho,\sigma}(F) = \rho + \sigma$, there exists $\mu\ge-1$ such that
$$
\st_{\rho,\sigma}(F)=(1,1)+\mu(-\sigma/\rho,1).
$$
Note that
$$
\sigma<0\text{ and } \rho+\sigma>0 \Rightarrow \rho > 1 \Rightarrow \frac{\sigma}{\rho} \notin \mathds{Z}\Rightarrow \st_{\rho,\sigma}(F)\ne (1+\sigma/\rho,0) \Rightarrow \mu\ne -1.
$$
Because of $\st_{\rho,\sigma}(F)\ne(1,1)$, we have $\mu>0$. Hence
$$
v_{1,-1}(\st_{\rho,\sigma}(F))=-\mu(\sigma+\rho)/\rho<0,
$$
which combined with~\eqref{quee5} gives
$$
0 > v_{1,-1}(\st_{\rho,\sigma}(F)) = \lambda_P v_{1,-1}(\st_{\rho,\sigma}(P)) \quad \text{and}\quad 0 > v_{1,-1}(\st_{\rho,\sigma}(F)) = \lambda_Q v_{1,-1}( \st_{\rho,\sigma}(Q)).
$$
Since $\lambda_P,\lambda_Q\ge 0$, we conclude that
$$
0> v_{1,-1}(\st_{\rho,\sigma}(P)) =v_{1,-1}(\ell_{\rho,\sigma}(P)) \quad\text{and}\quad 0> v_{1,-1}(\st_{\rho,\sigma}(Q)) =v_{1,-1}(\ell_{\rho,\sigma}(Q)).
$$
But then, by Corollary~\ref{ell depende del ell} and Remark~\ref{re v de un conmutador},
$$
v_{1,-1}([Q,P]_{\rho,\sigma})= v_{1,-1}([\Psi^{-1}(\ell_{\rho,\sigma}(Q)), \Psi^{-1}(\ell_{\rho,\sigma}(P))]_{\rho,\sigma}) \le v_{1,-1}(\ell_{\rho,\sigma}(P))+v_{1,-1}(\ell_{\rho,\sigma}(Q))<0,
$$
which contradicts $[Q,P]_{\rho,\sigma} = 1$.

\smallskip

\noindent (3)\enspace This is a consequence of Remark~\ref{a remark} and statements~(1) and~(2).
\end{proof}

\begin{proposition}\label{primitivo} For each standard minimal pair $(P,Q)$ there exists $(\rho,\sigma)\in \Dir(P)\cap\Dir(Q)$, such that

\begin{enumerate}

\smallskip

\item $\sigma<0$,

\smallskip

\item $v_{\rho,\sigma}(P)>0$ and $v_{\rho,\sigma}(Q)>0$,

\smallskip

\item $[P,Q]_{\rho,\sigma}=0$,

\smallskip

\item $\frac{v_{\rho,\sigma}(P)}{v_{\rho,\sigma}(Q)}= \frac{v_{1,1}(P)} {v_{1,1}(Q)}$,

\smallskip

\item $\frac{v_{\rho,\sigma}(Q)}{v_{\rho,\sigma}(P)}\notin\mathds{N}$ and $\frac{v_{\rho,\sigma}(P)}{v_{\rho,\sigma}(Q)}\notin\mathds{N}$,

\smallskip

\item $v_{1,-1}\bigl(\st_{\rho,\sigma}(P)\bigr)>0$ and $v_{1,-1} \bigl(\en_{\rho,\sigma}(P) \bigr)<0$.

\end{enumerate}

\end{proposition}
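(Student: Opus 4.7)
The plan is to apply Proposition~\ref{preparatoria} (``cutting the lower-right edge'') iteratively, starting from $(\rho_0,\sigma_0):=\Pred_P(1,0)$, and successively decrease $(\rho,\sigma)$ in $\mathfrak{V}$ until the $(\rho,\sigma)$-leading edge of $P$ straddles the diagonal $\{i=j\}$: that is, until $v_{1,-1}(\st_{\rho,\sigma}(P))>0$ while condition~(f) of Proposition~\ref{preparatoria} keeps $v_{1,-1}(\en_{\rho,\sigma}(P))<0$. The invariants propagated along the iteration are exactly hypotheses~(a)--(f) of Proposition~\ref{preparatoria}, and the stopping criterion is the sign flip in $v_{1,-1}(\st_{\rho,\sigma}(P))$.

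For the base case I must verify the six hypotheses of Proposition~\ref{preparatoria} at $(\rho_0,\sigma_0)$. By Proposition~\ref{le basico}(3), $\en_{\rho_0,\sigma_0}(P)=\st_{1,0}(P)$, and combined with statement~(3) of Lemma~\ref{PQ similares en I} the corresponding endpoint of $Q$ is a positive scalar multiple of this; Lemma~\ref{corchete se anula} then yields conditions~(b), (c), and (d), while (a) is the counterexample assumption. Condition~(e) combines Proposition~\ref{no se dividen} with the ratio equality of Lemma~\ref{PQ similares en I}(6). Condition~(f) is the standard-pair assumption $v_{1,-1}(\st_{1,0}(P))<0$ read through the identification $\en_{\rho_0,\sigma_0}(P)=\st_{1,0}(P)$, and analogously for $Q$.

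Next I carry out the iteration. At step $k$, if $v_{1,-1}(\st_{\rho_k,\sigma_k}(P_k))>0$, I stop: the direction $(\rho_k,\sigma_k)$ satisfies all six conclusions for the pair $(P_k,Q_k)$, which is itself a standard minimal pair with the same $v_{1,1}$-values as the original. Otherwise Proposition~\ref{preparatoria} produces $\varphi_{k+1}\in\Aut(A_1^{(l_{k+1})})$ and $(\rho_{k+1},\sigma_{k+1})<(\rho_k,\sigma_k)$; setting $(P_{k+1},Q_{k+1}):=(\varphi_{k+1}(P_k),\varphi_{k+1}(Q_k))$, its statements~(1)--(4) and~(10) restore hypotheses~(a)--(e) at the new direction, statement~(2) restores~(f), and statement~(4) preserves the ratio $v_{\rho,\sigma}(P)/v_{\rho,\sigma}(Q)$ throughout.

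The main obstacle will be termination. By statement~(8) of Proposition~\ref{preparatoria}, $v_{0,1}(\en_{\rho_k,\sigma_k}(P_k))$ is non-increasing and strictly decreasing at each step unless the support of $P_k$ contains the exceptional point $\en_{\rho_k,\sigma_k}(P_k)+(\sigma_k/\rho_k,-1)$. The strict-decrease case can occur only finitely often since $v_{0,1}(\en)\ge 0$; the equality case cannot persist indefinitely, because the resulting corner configurations eventually collide with the forbidden form $\frac{1}{m}\en=(\bar r-\tfrac{1}{l},\bar r)$ with $\bar r\ge 2$ excluded by Proposition~\ref{lema general}. Hence the iteration terminates at a direction where $v_{1,-1}(\st)>0$, yielding the desired direction for the resulting standard minimal pair.
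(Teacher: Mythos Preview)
Your approach has a structural gap that prevents it from proving the stated proposition. The statement asks for a direction $(\rho,\sigma)\in\Dir(P)\cap\Dir(Q)$ satisfying (1)--(6) \emph{for the given standard minimal pair} $(P,Q)$. Your iteration, however, replaces $(P,Q)$ at each step by $(\varphi_{k+1}(P_k),\varphi_{k+1}(Q_k))$, where $\varphi_{k+1}\in\Aut(A_1^{(l_{k+1})})$ with $l_{k+1}=\lcm(l_k,\rho_k)$. Since in general $\rho_k>1$, after one step the pair $(P_k,Q_k)$ lives in $A_1^{(l_k)}$ with $l_k>1$ and is no longer an element of $A_1$; in particular your assertion that ``$(P_k,Q_k)$ is itself a standard minimal pair'' is unjustified (Definition~\ref{Smp} requires $P,Q\in A_1$), and in any case you would be exhibiting a direction for the wrong pair. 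There are secondary issues as well: Proposition~\ref{preparatoria} does not give $[P_{k+1},Q_{k+1}]_{\rho_{k+1},\sigma_{k+1}}=0$ among its conclusions (statement~(10) is at the \emph{old} direction), and Lemma~\ref{corchete se anula} is stated only for $A_1$; moreover, your termination argument via Proposition~\ref{lema general} is only gestured at---that proposition excludes a specific corner shape but says nothing about termination of your iteration.

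The paper's proof avoids all of this by never modifying $(P,Q)$. It simply lists the finite chain
\[
V_-(P)=\{(1,-1)=(\rho_0,\sigma_0)<(\rho_1,\sigma_1)<\cdots<(\rho_k,\sigma_k)=(1,0)\}
\]
of directions of $P$ up to $(1,0)$, and shows by downward induction on $j$, using Proposition~\ref{le basico}(3) and Lemma~\ref{corchete se anula}, that each $(\rho_j,\sigma_j)$ with $0<j<k$ already satisfies (1)--(5). Property~(6) is then a pure sign-flip argument: $v_{1,-1}(\st_{\rho_1,\sigma_1}(P))=v_{1,-1}(P)>0$, $v_{1,-1}(\st_{\rho_k,\sigma_k}(P))=v_{1,-1}(\st_{1,0}(P))<0$, and the sequence $j\mapsto v_{1,-1}(\st_{\rho_j,\sigma_j}(P))$ is strictly decreasing by Proposition~\ref{le basico1}, so the sign changes at some intermediate $j$ (with equality ruled out by Theorem~\ref{central}(3)). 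No cutting, no automorphisms, no extension to $A_1^{(l)}$ are needed here; Proposition~\ref{preparatoria} is used elsewhere in the paper, not in this argument.
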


\begin{proof} Consider the set $V_-(P)$ made out of directions in $\ov{\Dir}(P)$ lower than $(1,0)$ together with $(1,0)$. We have
$$
V_-(P)=\{(\rho_0,\sigma_0)=(1,-1)<(\rho_1,\sigma_1)<\dots<(\rho_k,\sigma_k)=(1,0)\}.
$$
By statement~(1) of Lemma~\ref{PQ similares en I}, there exist $m,n\in\mathds{N}$, coprime such that
$$
\frac{m}{n} = \frac{v_{1,1}(P)}{v_{1,1}(Q)}.
$$
Then, by statements~(3) and (6) of Lemma~\ref{PQ similares en I},
\begin{equation}\label{starting}
\st_{1,0}(P)=\frac mn\st_{1,0}(Q).
\end{equation}
Let $j\in\{0,\dots,k-1\}$. We claim that if
$$
\st_{\rho_{j+1},\sigma_{j+1}}(P)=\frac mn\st_{\rho_{j+1},\sigma_{j+1}}(Q),
$$
then
\begin{equation}\label{paso inductivo}
v_{\rho_j,\sigma_j}(P),v_{\rho_j,\sigma_j}(Q)>0,\qquad [P,Q]_{\rho_j,\sigma_j}=0 \qquad\text{and}\qquad\frac{v_{\rho_j,\sigma_j}(P)}{v_{\rho_j,\sigma_j}(Q)} = \frac{m}{n}.
\end{equation}
In order to prove this claim we take
$$
(\rho',\sigma'):=\max\{(\rho'',\sigma'')\in \ov\Dir(P)\cup \ov\Dir(Q):(\rho'',\sigma'')< (\rho_{j+1},\sigma_{j+1})\}
$$
and we apply statement~(3) of Proposition~\ref{le basico} to obtain
\begin{equation}\label{bar}
\en_{\rho',\sigma'}(P)=\st_{\rho_{j+1},\sigma_{j+1}}(P)=\frac mn\st_{\rho_{j+1}, \sigma_{j+1}}(Q) = \frac mn\en_{\rho',\sigma'}(Q).
\end{equation}
So, we are in the conditions to apply Lemma~\ref{corchete se anula}, since
$$
(\rho',\sigma')<(\rho_{j+1},\sigma_{j+1})\le (1,0)\Longrightarrow \sigma'<0.
$$
Consequently by statement~(3) of that Lemma $(\rho',\sigma') = (\rho_j,\sigma_j)$, and by statements~(1) and~(2), the first two conditions in~\eqref{paso inductivo} are satisfied. Finally the last one follows immediately from~\eqref{bar}, finishing the proof of the claim.

Now note that~\eqref{paso inductivo} and~\eqref{pepe} imply
$$
\st_{\rho_{j},\sigma_{j}}(P)=\frac mn\st_{\rho_{j},\sigma_{j}}(Q)\quad\text{for $j>0$}.
$$
So we can start from~\eqref{starting} and apply an inductive procedure in order to obtain~\eqref{paso inductivo} for all $j=0,\dots,k-1$. Consequently, $(\rho_j,\sigma_j)$ satisfies statements~(1), (2), (3) and~(4) for $j=0,\dots,k-1$. Moreover, by Proposition~\ref{no se dividen} they also satisfy statement~(5). We now are going to prove that one of this $(\rho_j,\sigma_j)$ satisfies statement~(6). Since, by statement~(3) of Proposition~\ref{le basico},
$$
\st_{\rho_1,\sigma_1}(P) = \en_{1,-1}(P),
$$
we have
\begin{equation}
v_{1,-1}(\st_{\rho_1,\sigma_1}(P))=v_{1,-1}(P)>0,\label{bar1}
\end{equation}
where the inequality follows from~\eqref{paso inductivo} for $j=0$. Furthermore, by statement~(2) of Proposition~\ref{le basico1} and statement~(3) of Proposition~\ref{le basico},
\begin{equation}
v_{1,-1}(\st_{\rho_j,\sigma_j}(P))> v_{1,-1}(\en_{\rho_j,\sigma_j}(P)) =
v_{1,-1}(\st_{\rho_{j+1},\sigma_{j+1}}(P))\quad\text{for all $0<j<k$.}\label{bar2}
\end{equation}
Moreover
\begin{equation}
v_{1,-1}(\st_{\rho_k,\sigma_k}(P))=v_{1,-1}(\st_{1,0}(P))<0,\label{bar3}
\end{equation}
since $(P,Q)$ is a standard minimal pair. Combining~\eqref{bar1}, \eqref{bar2} and~\eqref{bar3} we obtain that there exists $0<j<k$ such that
$$
v_{1,-1}(\st_{\rho_j,\sigma_j}(P))>0\quad\text{and}\quad v_{1,-1}(\en_{\rho_j,\sigma_j}(P))\le 0.
$$
But by statement~(3) of Theorem~\ref{central}, which can be applied since $v_{\rho_j,\sigma_j}(P)>0$, the equality $v_{1,-1}(\en_{\rho_j,\sigma_j}(P))=0$ is impossible.  So, in order to finish the proof it suffices to note that, by statement~(3) of lemma~\ref{corchete se anula}, we have $(\rho_j,\sigma_j)\in\Dir(P)\cap \Dir(Q)$.
\end{proof}

Let $(P,Q)$ be a standard minimal pair. By statement~(1) of Lemma~\ref{PQ similares en I}, there exist $m,n\in\mathds{N}$ coprime, such that
$$
\frac{m}{n} = \frac{v_{1,1}(P)}{v_{1,1}(Q)}.
$$
Let $(\rho,\sigma)$ be as in Proposition~\ref{primitivo}. Set
$$
C_0:=\frac 1m\en_{\rho,\sigma}(P)\qquad\text{and}\qquad C_1:=\frac 1m\st_{\rho,\sigma}(P).
$$
By Theorem~\ref{central} there exists a $(\rho,\sigma)$-homogeneous $F\in A_1$ such that
$$
[P,F]_{\rho,\sigma}=\ell_{\rho,\sigma}(P)\qquad\text{and}\qquad v_{\rho,\sigma}(F) = \rho+\sigma.
$$
Write $(f_1,f_2)=\en_{\rho,\sigma}(F)$, $(u,v)=C_0$, $(r',s')=C_1$ and set $\gamma:=(v-s')/\rho$. Note that
$$
v_{\rho,\sigma}(u,v) = v_{\rho,\sigma}(r',s')\qquad\text{and}\qquad r'<u,
$$
where the inequality follows from statement~(1) of Proposition~\ref{le basico1}, since $(\rho,\sigma)<(1,0)$.

\begin{proposition}\label{final} It holds that $C_0,C_1\in\mathds{N}_0\times \mathds{N}_0$ and $v_{\rho,\sigma}(C_0) = v_{\rho,\sigma}(C_1)$. Moreover,

\begin{enumerate}

\smallskip

\item $u<v$ and $r'>s'$,

\smallskip

\item $f_1\ge 2$ and $u\ge 3$,

\smallskip

\item $\gcd(u,v)>1$,

\smallskip

\item $\en_{\rho,\sigma}(F)=\mu C_0$ for some $0<\mu<1$,

\smallskip

\item $(\rho,\sigma)=\dir(f_1-1,f_2-1)$,

\smallskip

\item if $d:=\gcd(f_1-1,f_2-1)=1$ then
$$
C_2:=C_1+(\gamma-s')\left(-\frac{\sigma}{\rho},1 \right),
$$
is not of the form $\bigl(\ov r-\frac {1}{\rho},\ov r\bigr)$ for any $\ov r\ge 2$.

\smallskip

\end{enumerate}
\end{proposition}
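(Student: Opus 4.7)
The plan is to unpack the algebraic structure imposed by the hypotheses step by step. Conditions~(2)--(4) of Proposition~\ref{primitivo}, combined with statement~(4) of Lemma~\ref{PQ similares en I}, produce a $(\rho,\sigma)$-homogeneous polynomial $R\in L$ with $\ell_{\rho,\sigma}(P)=\lambda_P R^m$ and $\ell_{\rho,\sigma}(Q)=\lambda_Q R^n$ (here $m,n$ are read off from statement~(4) of Proposition~\ref{primitivo}). Since $\ell_{\rho,\sigma}(P)\in K[x,y]$, $R$ itself lies in $K[x,y]$, so $C_0=\en_{\rho,\sigma}(R)$ and $C_1=\st_{\rho,\sigma}(R)$ are lattice points in $\mathds{N}_0\times\mathds{N}_0$ sharing the value $v_{\rho,\sigma}(R)$. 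Statement~(1) is then condition~(6) of Proposition~\ref{primitivo} divided by $m$.

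For statement~(4), Theorem~\ref{central} provides $F$ with $v_{\rho,\sigma}(F)=\rho+\sigma$ and $[P,F]_{\rho,\sigma}=\ell_{\rho,\sigma}(P)$. I first rule out $F$ being a monomial: among lattice points on $\rho i+\sigma j=\rho+\sigma$ in $\mathds{N}_0^2$ (with $\sigma<0$, $\gcd(\rho,\sigma)=1$, $\rho+\sigma>0$), the only candidate for a monomial is $(1,1)$, but a direct use of Proposition~\ref{calculo del corchete} with $F=cXY$ shows that $[P,cXY]_{\rho,\sigma}$ cannot match $\ell_{\rho,\sigma}(P)$ when $(\rho,\sigma)\in\Dir(P)$. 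Hence $\en_{\rho,\sigma}(F)\neq(1,1)$, so by Theorem~\ref{central}(2) one has $\en_{\rho,\sigma}(F)\sim\en_{\rho,\sigma}(P)=mC_0$, and $\en_{\rho,\sigma}(F)=\mu C_0$ for some $\mu>0$. Equating $v_{\rho,\sigma}$'s gives $\mu=(\rho+\sigma)/v_{\rho,\sigma}(R)$, and $\mu<1$ reduces to $\rho(r'-1)>|\sigma|(s'-1)$, which follows from $r'>s'$ and $\rho>|\sigma|$ after separating the cases $s'=0$ and $s'\ge 1$ (the latter gives $\rho(r'-1)-|\sigma|(s'-1)\ge s'(\rho-|\sigma|)+|\sigma|>0$).

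Statements~(2), (3) and (5) are formal corollaries of~(4). The lattice points of $F$'s line parametrize as $(1+|\sigma|t,1+\rho t)$, so $\en_{\rho,\sigma}(F)=(1+|\sigma|t_*,1+\rho t_*)$ with $t_*\ge 1$; hence $f_1\ge 1+|\sigma|\ge 2$, and $u=f_1/\mu>f_1\ge 2$ yields $u\ge 3$. Writing $\mu=p/q$ in lowest terms with $q\ge 2$, integrality of $\mu u$ and $\mu v$ forces $q\mid \gcd(u,v)$, giving $\gcd(u,v)\ge 2$. Statement~(5) follows from $v_{\rho,\sigma}(f_1-1,f_2-1)=v_{\rho,\sigma}(F)-v_{\rho,\sigma}(1,1)=0$ together with $f_1<f_2$ via equation~\eqref{val}.

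For statement~(6), assume $d=\gcd(f_1-1,f_2-1)=1$. Since $f_1-1=|\sigma|t_*$ and $f_2-1=\rho t_*$, we get $t_*=1$; together with $F$ being non-monomial this forces $\st_{\rho,\sigma}(F)=(1,1)$, whence $v_{0,1}(\en_{\rho,\sigma}(F))-v_{0,1}(\st_{\rho,\sigma}(F))=\rho$. Corollary~\ref{pavadass}(3) then gives $f_P=c\prod_{i=1}^{\rho}(x-\lambda_i)^{m\gamma}$ with $\rho$ distinct non-zero $\lambda_i$'s. The plan is to apply Proposition~\ref{preparatoria} to $(P,Q,(\rho,\sigma))$ choosing $\lambda=\lambda_1$; statements~(6)--(7) of that proposition then compute
\[
\tfrac{1}{m}\en_{\rho',\sigma'}(\varphi(P))=\tfrac{1}{m}\st_{\rho,\sigma}(\varphi(P))= \bigl(r'+(\gamma-s')|\sigma|/\rho,\,\gamma\bigr)=C_2.
\]
Proposition~\ref{lema general} applied to $(\varphi(P),\varphi(Q),(\rho',\sigma'))$ inside $A_1^{(\rho)}$ (for which $l'=\rho$) then yields $C_2\neq (\bar r-1/\rho,\bar r)$ for any $\bar r\ge 2$. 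The main obstacle is legitimizing this choice of $\lambda$: Proposition~\ref{preparatoria} requires $\lambda_1$ to realize the maximal multiplicity in $\mathfrak{f}(x)=x^{ms'}f_P(x)$, i.e.\ $m\gamma\ge ms'$, equivalently $\gamma\ge s'$. When $s'>\gamma$ the root $x=0$ dominates and one must proceed by a separate argument--either by showing that the equation $v_{\rho,\sigma}(R)=\gamma(\rho+\sigma)-1$ (to which $C_2=(\bar r-1/\rho,\bar r)$ is equivalent) is inconsistent with $r'>s'>\gamma\ge 1$ via the identity $\rho(r'-\gamma)=|\sigma|(s'-\gamma)-1$, or by iterating the descent of Proposition~\ref{lema general} until the dominant factor of $\mathfrak{f}$ becomes one of the $\lambda_i$'s.
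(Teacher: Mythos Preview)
Your overall strategy matches the paper's, but two points need attention.

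\smallskip

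\textbf{The argument that $F$ is not a monomial.} Your claim that $(1,1)$ is the only lattice point on $\rho i+\sigma j=\rho+\sigma$ in $\mathds{N}_0^2$ is false: the points $(1+k|\sigma|,1+k\rho)$ for $k\ge 0$ all lie on this line. So your route to ``$F$ is not a monomial'' does not work as written. The paper instead argues directly that $\st_{\rho,\sigma}(F)=(1,1)$ \emph{unconditionally}: every lattice point on this line satisfies $v_{1,-1}\le 0$, whereas $v_{1,-1}(\st_{\rho,\sigma}(P))>0$ by Proposition~\ref{primitivo}(6), so $\st_{\rho,\sigma}(F)\nsim\st_{\rho,\sigma}(P)$ and Theorem~\ref{central}(1) forces $\st_{\rho,\sigma}(F)=(1,1)$. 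Then Corollary~\ref{pavadass}(1) (since $(\rho,\sigma)\in\Dir(P)$ forces $\deg f_P>0$) gives that $F$ is not a monomial, hence $\en_{\rho,\sigma}(F)\ne(1,1)$. This is cleaner and also yields $\st_{\rho,\sigma}(F)=(1,1)$ without assuming $d=1$.

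\smallskip

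\textbf{The ``main obstacle'' in (6).} The case $s'>\gamma$ that you worry about cannot occur. With $d=1$ one has $(f_1,f_2)=(1+|\sigma|,1+\rho)$, so $v-u=(f_2-f_1)/\mu=(\rho+\sigma)/\mu$. On the other hand $C_0-C_1=\gamma(-\sigma,\rho)$ gives $v-u=\gamma(\rho+\sigma)-(r'-s')$. Combining,
\[
\gamma=\frac{1}{\mu}+\frac{r'-s'}{\rho+\sigma}>\frac{1}{\mu},
\]
since $r'>s'$. Now $s'=v-\gamma\rho=(1+\rho)/\mu-\gamma\rho$, and $s'<\gamma$ is equivalent to $(1+\rho)/\mu<\gamma(1+\rho)$, i.e.\ $1/\mu<\gamma$, which we just established. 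Hence the maximal-multiplicity root of $\mathfrak{f}=x^{ms'}f_P$ is always a nonzero root of $f_P$, with $m_\lambda=m\gamma$, and your application of Propositions~\ref{preparatoria} and~\ref{lema general} goes through exactly as you wrote. (The paper glosses over this verification too, but it is needed.)

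\smallskip

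A minor citation issue: Lemma~\ref{PQ similares en I} only covers $(\rho,\sigma)\in\ov I$, where $\sigma\ge 0$; here $\sigma<0$, so you should invoke Theorem~\ref{f[] en A_1^{(l)}}(2) directly, as the paper does.
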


\begin{proof} By statements~(1)--(4) of Proposition~\ref{primitivo} and statement~(2) of Theorem~\ref{f[] en A_1^{(l)}} there exist $\lambda_P,\lambda_Q\in K^{\times}$ and a $(\rho,\sigma)$-homogeneous element $R\in L$, such that
\begin{equation}\label{R existe}
\ell_{\rho,\sigma}(P) = \lambda_P R^m\quad\text{and}\quad\ell_{\rho,\sigma}(Q) = \lambda_P R^n.
\end{equation}
By statements~(4) and~(5) of Proposition~\ref{pr v de un producto}, necessarily $C_0=\en_{\rho,\sigma}(R)$ and $C_1=\st_{\rho,\sigma}(R)$, and so both are in $\mathds{N}_0\times\mathds{N}_0$ and $v_{\rho,\sigma}(C_0) = v_{\rho,\sigma}(C_1)$. Statement~(1) follows from the fact that $v_{1,-1}(\en_{\rho,\sigma}(P))<0$ and $v_{1,-1}(\st_{\rho,\sigma}(P))>0$ (statement~(5) of Proposition~\ref{primitivo}). In order to prove the rest of the proposition we need some computations. Note that
$$
\st_{\rho,\sigma}(F),\en_{\rho,\sigma}(F)\in\{(1,1)+\alpha(-\sigma/\rho,1):\alpha \in \mathds{Z},\ \alpha\ge -1\}.
$$
But $\alpha=-1$ cannot occur, since $(1+\sigma/\rho,0)$ is not in $\mathds{N}_0 \times \mathds{N}_0$, due to $\rho>-\sigma>0$. Since, by statement~(1) of Corollary~\ref{pavadass}, the element $F$ is not a monomial, there exist $\alpha_{\st}\ne\alpha_{\en}$ in $\mathds{N}_0$, such that
$$
\st_{\rho,\sigma}(F) = (1,1)+\alpha_{\st}(-\sigma/\rho,1)\qquad\text{and}\qquad \en_{\rho,\sigma}(F) = (1,1)+\alpha_{\en}(-\sigma/\rho,1).
$$
Since
$$
v_{1,-1}(\st_{\rho,\sigma}(F)) = -\alpha_{\st}\frac{\rho+\sigma}{\rho}\le 0,
$$
it is impossible that $\st_{\rho,\sigma}(F)\sim\st_{\rho,\sigma}(P)$, because $v_{1,-1}(\st_{\rho,\sigma}(P))>0$ (statement~(5) of Pro\-po\-sition~\ref{primitivo}). Hence, by statement~(1) of Theorem~\ref{central},
\begin{equation}
\st_{\rho,\sigma}(F) = (1,1).\label{foo1}
\end{equation}
But this means that $\alpha_{\st} = 0$ and so $\alpha_{\en}>0$. Consequently,
\begin{equation}
f_1 = 1- \alpha_{\en}\frac{\sigma}{\rho} \ge 2\label{foo}
\end{equation}
and, by statement~(2) of Theorem~\ref{central},
$$
\en_{\rho,\sigma}(F)\sim\en_{\rho,\sigma}(P).
$$
From $v_{\rho,\sigma}(F)\ne 0\ne v_{\rho,\sigma}(P)$ we obtain
$$
\en_{\rho,\sigma}(F)\ne (0,0)\ne\en_{\rho,\sigma}(P),
$$
and therefore
$$
(f_1,f_2) = \en_{\rho,\sigma}(F)=\frac{\mu}{m}\en_{\rho,\sigma}(P) = \mu C_0 = \mu(u,v) \quad\text{for some $\mu>0$.}
$$
Notice that
$$
v_{\rho,\sigma}(C_1)=r'\rho+s'\sigma=(r'-s')\rho+s'(\rho+\sigma)\ge(r'-s')\rho\ge \rho>\rho+\sigma,
$$
since $r'-s'\! = \!\frac 1m v_{1,-1}(\st_{\rho,\sigma}(P))\!>\!0$. Hence $\mu\!\ge \! 1$ is impossible, as it would lead to the contradiction
$$
v_{\rho,\sigma}(C_1)=v_{\rho,\sigma}(C_0)=\frac{1}{\mu}v_{\rho,\sigma}(F)\le v_{\rho,\sigma}(F)=\rho+\sigma.
$$
This completes the proof of statement~(4) and, combined with $f_1=\mu u$ and~\eqref{foo}, also proves statement~(2). Moreover, if $\gcd(u,v)=1$, then there is no $\mu\in]0,1[$ such that $\mu(u,v)\in\mathds{N}_0\times\mathds{N}_0$, and so statement~(3) is true. Statement~(5) holds by Remark~\ref{valuacion depende de extremos} and equality~\eqref{foo1}. Finally we prove statement~(6). By Proposition~\ref{primitivo}, the hypothesis of Proposition~\ref{preparatoria} are satisfied for $P,Q$ and $(\rho,\sigma)$, with the possible exception of the inequality $v_{1,-1}(\en_{\rho,\sigma}(Q))<0$. But this last condition is also satisfied, due to $v_{1,-1}(\en_{\rho,\sigma}(P))<0$, equalities~\eqref{R existe} and statement~(5) of Proposition~\ref{pr v de un producto}. Consider the automorphism $\varphi$ of $A_1^{(\rho)}$ and the direction $(\rho,\sigma)\in \mathfrak{V}$, obtained from Proposition~\ref{preparatoria}. As in Proposition~\ref{preparatoria} write $(r,s)= \st_{\rho,\sigma}(P)$. By statement~(5) of Proposition~\ref{pr v de un producto} we have $(r,s) = m(r',s')$. Consequently, by statement~(6) of Proposition~\ref{preparatoria},
$$
\frac 1m\en_{\rho',\sigma'}(\varphi(P)) = \frac 1m \Bigl(r+s\frac{\sigma}{\rho} -m_\lambda\frac{\sigma}{\rho},m_\lambda\Bigr)=\Bigl(r'+\frac{s'\sigma}{\rho} -\frac{m_{\lambda}}{m} \frac{\sigma}{\rho},\frac{m_{\lambda}}{m}\Bigr) = C_1+\Bigl(\frac{m_{\lambda}}{m}-s'\Bigr)\left(-\frac{\sigma}{\rho},1 \right),
$$
where $m_\lambda$ is the highest multiplicity of a linear factor in $f_{P,\rho,\sigma}$. Statements~(1), (2) and~(4) of Proposition~\ref{preparatoria} guarantee that the hypothesis of Proposition~\ref{lema general} are satisfied for $\varphi(P)$, $\varphi(Q)$ and $(\rho',\sigma')$. Consequently
$$
\frac{1}{m} \en_{\rho',\sigma'}(\varphi(P))\ne  \left(\ov r-\frac {1}{\rho},\ov r\right)\qquad\text{for all $\ov r\ge 2$.}
$$
Hence, if we prove $m_\lambda = m\gamma$, then $C_2=\frac{1}{m}\en_{\rho',\sigma'}(\varphi(P))$, which concludes the proof of~(6).
Note that by statement~(5) and~\eqref{val}
$$
(\rho,\sigma) = \pm \left(\frac{f_2-1}{d},\frac{1-f_1}{d}\right),
$$
where $d:=\gcd(f_1-1,f_2-1)$. Since $\rho+\sigma>0$ and, by statements~(1) and~(4), we have $f_2-f_1>0$, necessarily
$$
(\rho,\sigma) = \left(\frac{f_2-1}{d},\frac{1-f_1}{d}\right).
$$
If now $d=1$, then
$$
v_{1,0}(\en_{\rho,\sigma}(F))- v_{1,0}(\st_{\rho,\sigma}(F)) = f_2-1 = \rho
$$
and by statement~(3) of Corollary~\ref{pavadass}
$$
m_\lambda = \frac{1}{\rho}\left(v_{1,0}(\en_{\rho,\sigma}(P))- v_{1,0}(\st_{\rho,\sigma} (P))\right)= \frac{m(v-s')}{\rho} = m\gamma,
$$
as desired.
\end{proof}

\begin{corollary} Let $A_1$ be the Weyl algebra over a non-necessarily algebraically closed characteristic zero field $K$. We have $B>15$.
\end{corollary}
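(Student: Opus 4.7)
The plan is to assume for contradiction that $B \leq 15$ and reduce to a finite enumeration via the machinery of Propositions~\ref{primitivo} and~\ref{final}. Pass first to the algebraic closure $\ov{K}$, since a counterexample over $K$ extends to one over $\ov{K}$ with the same total degrees; then Corollary~\ref{todos son Smp} supplies a standard minimal pair $(P,Q)$ with $\gcd(v_{1,1}(P), v_{1,1}(Q))=B$. Writing $v_{1,1}(P)/v_{1,1}(Q) = m/n$ in lowest terms, Proposition~\ref{no se dividen} forces $m,n > 1$, so $v_{1,1}(P)=mB$ and $v_{1,1}(Q)=nB$. Applying Proposition~\ref{primitivo} produces $(\rho,\sigma)\in\Dir(P)\cap\Dir(Q)$ with $\sigma<0$; then Proposition~\ref{final} yields $C_0=(u,v)$, $C_1=(r',s')$, the $(\rho,\sigma)$-homogeneous $F$ with $(f_1,f_2):=\en_{\rho,\sigma}(F)$, and the scalar $\mu\in(0,1)$.

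The first key step is the geometric inequality $u+v \leq B$. Since $P$ is subrectangular with corner $(a_P,b_P)=\en_{1,1}(P)$ and $a_P+b_P = v_{1,1}(P) = mB$, and since $\en_{\rho,\sigma}(P)=m(u,v)\in\Supp(P)\subseteq[0,a_P]\times[0,b_P]$, we get $mu\le a_P$ and $mv\le b_P$, hence $u+v\le B\le 15$. Combining this with $u\ge 3$, $u<v$, $\gcd(u,v)>1$ from statements~(1)--(3) of Proposition~\ref{final} restricts $(u,v)$ to the nine pairs $(3,6), (3,9), (3,12), (4,6), (4,8), (4,10), (5,10), (6,8), (6,9)$.

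Next, for each such $(u,v)$, statement~(4) of Proposition~\ref{final} forces $uf_2=vf_1$ with $f_1,f_2\in\mathds{N}$ and $2\le f_1<u$, producing finitely many quadruples $(u,v,f_1,f_2)$. A direct check shows $d:=\gcd(f_1-1,f_2-1)=1$ in every case, so by statement~(5), $(\rho,\sigma)=(f_2-1,\,1-f_1)$. The identity $v_{\rho,\sigma}(C_0)=v_{\rho,\sigma}(C_1)$ combined with $r'>s'\ge 0$ then forces $(u-r',\,v-s')=\lambda(f_1-1,\,f_2-1)$ for a positive integer $\lambda$ lying in the interval $(u/f_1,\,v/(f_2-1)]$, whose length $u/[f_1(f_2-1)]$ is strictly less than $1$, so contains at most one integer. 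Only five quadruples admit such a $\lambda$:
\[
(3,6,2,4),\ (4,6,2,3),\ (5,10,2,4),\ (5,10,3,6),\ (6,9,2,3),
\]
with $\lambda = 2,3,3,2,4$ respectively.

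Finally, for each surviving case I would compute $\gamma=(v-s')/\rho = \lambda$ and substitute into
\[
C_2 \;=\; C_1 + (\gamma-s')\bigl(-\sigma/\rho,\,1\bigr) \;=\; \bigl(r'+(\lambda-s')(f_1-1)/(f_2-1),\;\lambda\bigr),
\]
verifying directly in each case that $C_2=(\lambda-1/\rho,\,\lambda)$, which since $\lambda\ge 2$ contradicts statement~(6) of Proposition~\ref{final}. The main obstacle is purely organizational: ensuring the case analysis is exhaustive. Algebraically, $C_2$ having the forbidden form is equivalent to the single identity $\lambda(f_2-f_1)=v-u+1$, which together with $uf_2=vf_1$ turns out to hold automatically for all five surviving configurations, collapsing the enumeration into a uniform contradiction.
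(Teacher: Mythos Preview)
Your approach is essentially the paper's: reduce to the algebraic closure, pass to a standard minimal pair via Corollary~\ref{todos son Smp}, invoke Propositions~\ref{primitivo} and~\ref{final}, bound $u+v\le B$, and eliminate the finitely many candidate configurations. Your $\lambda$-parametrization of $C_1$ and the closing identity $\lambda(f_2-f_1)=v-u+1$ are tidy organizational touches absent from the paper's bare table, and your verification that $d=\gcd(f_1-1,f_2-1)=1$ in all thirteen $(u,v,f_1,f_2)$ cases, together with $\gamma=\lambda$ and $C_2=(\lambda-1/\rho,\lambda)$ in the five survivors, is correct.

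There is, however, a slip in the elimination step. Your claim that the interval $\bigl(u/f_1,\,v/(f_2-1)\bigr]$ has length $u/\bigl[f_1(f_2-1)\bigr]$ strictly less than $1$ is false: for $(u,v,f_1,f_2)=(4,6,2,3)$ the length equals $1$, and for $(6,9,2,3)$ it equals $3/2$. In both cases the interval still happens to contain exactly one integer (namely $(2,3]\ni 3$ and $(3,9/2]\ni 4$), so your list of five surviving quadruples with $\lambda=2,3,3,2,4$ is correct and all subsequent computations go through---but the argument as written does not establish uniqueness of $\lambda$. The repair is simply to record the thirteen intervals explicitly and read off the integers they contain, which is precisely what the paper's table does.
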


\begin{proof} Without loss of generality we can assume that $K$ is algebraically closed. Let $(P,Q)$ be a minimal pair. By Corollary~\ref{todos son Smp} we can also assume that $(P,Q)$ is standard. So, it is clear that if $(u,v)$ is as in Proposition~\ref{final}, then
$$
u+v=v_{1,1}(C_0)=\frac{1}{m}v_{1,1}(\en_{\rho,\sigma}(P))\le\frac{1}{m} v_{1,1}(P)= B.
$$
So it suffices to prove that there is no pair $(u,v)$ with $u+v\le 15$, for which
there exist $(f_1,f_2)$ and $C_1=(r',s')$, such that all the conditions of Proposition~\ref{final} are satisfied.

\smallskip

\begin{center}\begin{tabular}{|c|c|c|c|c|c|c|}\hline
              $C_0$ &$(f_1,f_2)$  & $(\rho,\sigma)$& $C_1$ & d & $\gamma$ & $C_2$\\\hline
              (3,6) & (2,4) & (3,-1) & (1,0) & 1& 2& $\left(2-\frac 13,2\right) $\\\hline
              (3,9) & (2,6) & (5,-1) & $\times$ & &&\\\hline
              (3,12) & (2,8) & (7,-1) & $\times$ &&&\\\hline
              (4,6) & (2,3) & (2,-1) & (1,0) & 1& 3& $\left(3-\frac 12,3\right) $\\\hline
              (4,8) & (2,4) & (3,-1) & $\times$ &&&\\\hline
              (4,8) & (3,6) & (5,-2) & $\times$ &&&\\\hline
              (4,10) & (2,5) & (4,-1) & $\times$ &&&\\\hline
              (5,10) & (2,4) & (3,-1) & (2,1) &1&3&  $\left(3-\frac 13,3\right) $\\\hline
              (5,10) & (3,6) & (5,-2) & (1,0) &1&2&  $\left(2-\frac 15,2\right) $\\\hline
              (5,10) & (4,8) & (7,-3) & $\times$ &&&\\\hline
              (6,8) & (3,4) & (3,-2) & $\times$ &&&\\\hline
              (6,9) & (2,3) & (2,-1) & (2,1) &1&4& $\left(4-\frac 12,4\right) $\\\hline
              (6,9) & (4,6) & (5,-3) & $\times$ &&&\\\hline
\end{tabular}
\end{center}

\smallskip

\noindent First we list all possible pairs $(u,v)$ with $v>u>2$, $\gcd(u,v)>1$ and $u+v\le 15$. We also list all the possible $(f_1,f_2)=\mu (u,v)$ with $f_1\ge 2$ and $0<\mu<1$. Then we compute the co\-rres\-ponding $(\rho,\sigma)$ using statement~(5) of Proposition~\ref{final} and we verify if there is a~$C_1:=(r',s')$ with $s'<r'<u$ and $v_{\rho,\sigma}(u,v) = v_{\rho,\sigma}(r',s')$. This happens in five cases. In all these cases $d:=\gcd(f_1-1,f_2-1)=1$. We compute $\gamma:=(v-s')/\rho$ and $C_2$ in each of the five cases and we verify that in none of them condition~(6) of Proposition~\ref{final} is satisfied, which concludes the proof.
\end{proof}

\begin{bibdiv}
\begin{biblist}

\bib{AO}{article}{
   author={Appelgate, Harry},
   author={Onishi, Hironori},
   title={The Jacobian conjecture in two variables},
   journal={J. Pure Appl. Algebra},
   volume={37},
   date={1985},
   number={3},
   pages={215--227},
   issn={0022-4049},
   review={\MR{797863 (87b:14005)}},
   doi={10.1016/0022-4049(85)90099-4},
}

\bib{AE}{article}{
   author={Adjamagbo, Pascal Kossivi},
   author={van den Essen, Arno},
   title={A proof of the equivalence of the Dixmier, Jacobian and Poisson
   conjectures},
   journal={Acta Math. Vietnam.},
   volume={32},
   date={2007},
   number={2-3},
   pages={205--214},
   issn={0251-4184},
   review={\MR{2368008 (2009a:14079)}},
}

\bib{BCW}{article}{
   author={Bass, Hyman},
   author={Connell, Edwin},
   author={Wright, David},
   title={The Jacobian conjecture: reduction of degree and formal
              expansion of the inverse},
   journal={Bull. Amer. Math. Soc. (N.S.)},
   volume={7},
   date={1982},
   number={2},
   pages={287--330},
   issn={0273-0979},
   review={\MR{663785 (83k:14028)}},
}

\bib{B1}{unpublished}{
author={Bavula, Vladimir},
title={The Jacobian conjecture$_{2n}$ implies the Dixmier problem$_n$},
journal={ArXiv: math.RA/0512250},
}

\bib{B2}{article}{
   author={Bavula, V. V.},
   title={A question of Rentschler and the Dixmier problem},
   journal={Ann. of Math. (2)},
   volume={154},
   date={2001},
   number={3},
   pages={683--702},
   issn={0003-486X},
   review={\MR{1884619 (2003d:16032)}},
   doi={10.2307/3062144},
}

\bib{BK}{article}{
   author={Belov-Kanel, Alexei},
   author={Kontsevich, Maxim},
   title={The Jacobian conjecture is stably equivalent to the Dixmier
   conjecture},
   language={English, with English and Russian summaries},
   journal={Mosc. Math. J.},
   volume={7},
   date={2007},
   number={2},
   pages={209--218, 349},
   issn={1609-3321},
   review={\MR{2337879 (2009f:16041)}},
}

\bib{BL}{article}{
   author={Bernstein, Joseph},
   author={Lunts, Valery},
   title={On nonholonomic irreducible $D$-modules},
   journal={Invent. Math.},
   volume={94},
   date={1988},
   number={2},
   pages={223--243},
   issn={0020-9910},
   review={\MR{958832 (90b:58247)}},
   doi={10.1007/BF01394325},
}

\bib{D}{article}{
   author={Dixmier, Jacques},
   title={Sur les alg\`ebres de Weyl},
   language={French},
   journal={Bull. Soc. Math. France},
   volume={96},
   date={1968},
   pages={209--242},
   issn={0037-9484},
   review={\MR{0242897 (39\#4224)}},
}

\bib{GGV1}{article}{
   author={Guccione, Jorge A.},
   author={Guccione, Juan J.},
   author={Valqui, Christian},
   title={On the centralizers in the Weyl algebra},
   journal={Proc. Amer. Math. Soc.},
   volume={140},
   date={2012},
   number={4},
   pages={1233--1241},
   issn={0002-9939},
   review={\MR{2869108}},
   doi={10.1090/S0002-9939-2011-11017-7},
}

\bib{J}{article}{
   author={Joseph, A},
   title={The Weyl algebra -- semisimple and nilpotent elements},
   journal={American Journal of Mathematics},
   volume={97},
   date={1975},
   pages={597--615},
   issn={0002-9327},
   review={\MR{0379615 (52 :520)}},
}

\bib{M}{article}{
   author={Moh, T. T.},
   title={On the Jacobian conjecture and the configurations of roots},
   journal={J. Reine Angew. Math.},
   volume={340},
   date={1983},
   pages={140--212},
   issn={0075-4102},
   review={\MR{691964 (84m:14018)}},
}

\bib{N1}{article}{
   author={Nagata, Masayoshi},
   title={Some remarks on the two-dimensional Jacobian conjecture},
   journal={Chinese J. Math.},
   volume={17},
   date={1989},
   number={1},
   pages={1--7},
   issn={0379-7570},
   review={\MR{1007872 (90k:14010b)}},
}

\bib{N2}{article}{
   author={Nagata, Masayoshi},
   title={Two-dimensional Jacobian conjecture},
   conference={
      title={Algebra and topology 1988},
      address={Taej\u on},
      date={1988},
   },
   book={
      publisher={Korea Inst. Tech.},
      place={Taej\u on},
   },
   date={1988},
   pages={77--98},
   review={\MR{1022249 (90k:14010a)}},
}

\bib{T}{article}{
   author={Tsuchimoto, Yoshifumi},
   title={Endomorphisms of Weyl algebra and $p$-curvatures},
   journal={Osaka J. Math.},
   volume={42},
   date={2005},
   number={2},
   pages={435--452},
   issn={0030-6126},
   review={\MR{2147727 (2006g:14101)}},
}

\bib{Z}{article}{
   author={Zhao, Wenhua},
   title={Images of commuting differential operators of order one with
   constant leading coefficients},
   journal={J. Algebra},
   volume={324},
   date={2010},
   number={2},
   pages={231--247},
   issn={0021-8693},
   review={\MR{2651354 (2011f:13024)}},
   doi={10.1016/j.jalgebra.2010.04.022},
}
\end{biblist}
\end{bibdiv}

\end{document}